\documentclass[11pt]{amsart}
\usepackage[T1]{fontenc}
\usepackage[utf8]{inputenc}
\usepackage{amssymb,mathtools,mathrsfs,booktabs,array}
\usepackage[shortalphabetic,initials,nobysame]{amsrefs}
\usepackage[colorlinks=true,linkcolor=red,urlcolor=magenta,citecolor=black]{hyperref}
\hypersetup{
 pdftitle={Bethe Cycles, Frobenius Identities, and Quantum/Classical Duality},
 pdfauthor={Peter Koroteev},
 pdfsubject={Kahler difference equations, cyclic vertices, and Newton-Adams cancellation},
 pdfkeywords={Ruijsenaars, roots of unity, Frobenius, dynamical difference equations, complete flags, quantum K-theory}
}
\newtheorem{theorem}{Theorem}[section]
\newtheorem*{maintheorem}{Main Theorem}
\newtheorem{lemma}[theorem]{Lemma}
\newtheorem{proposition}[theorem]{Proposition}
\newtheorem{corollary}[theorem]{Corollary}
\theoremstyle{definition}

\theoremstyle{remark}
\newtheorem{remark}[theorem]{Remark}

\numberwithin{equation}{section}
\newcommand{\Z}{\mathbb Z}

\newcommand{\C}{\mathbb C}

\DeclareMathOperator{\Fl}{Fl}
\DeclareMathOperator{\GL}{GL}

\DeclareMathOperator{\diag}{diag}

\newcommand{\dd}{\mathrm d}

\newcommand{\id}{\operatorname{id}}

\renewcommand{\eprint}[1]{\href{https://arxiv.org/abs/#1}{arXiv:#1}}
\BibSpec{article}{%
    +{}  {\PrintAuthors}                {author}
    +{,} { \textit}                     {title}
    +{.} { }                            {part}
    +{:} { \textit}                     {subtitle}
    +{,} { \PrintContributions}         {contribution}
    +{.} { \PrintPartials}              {partial}
    +{,} { }                            {journal}
    +{}  { \textbf}                     {volume}
    +{}  { \PrintDatePV}                {date}
    +{,} { \issuetext}                  {number}
    +{,} { \eprintpages}                {pages}
    +{,} { }                            {status}
    +{,} { \eprint}                     {eprint}
    +{}  { \parenthesize}               {language}
    +{}  { \PrintTranslation}           {translation}
    +{;} { \PrintReprint}               {reprint}
    +{.} { }                            {note}
    +{.} {}                             {transition}
    +{}  {\SentenceSpace \PrintReviews} {review}
}
\title[Bethe Cycles Frobenius Identities]{Bethe Cycles, Frobenius Identities, and Quantum/Classical Duality}

\author{Peter Koroteev}
\address{Beijing Institute for Mathematical Sciences and Applications,
Beijing, China}
\email{peter.koroteev@gmail.com}
\date{\today}

\begin{document}
\begin{abstract}
We study Frobenius action on the trigonometric Ruijsenaars--Schneider
system associated with cotangent bundles to complete flag varieties.
The construction continues the program initiated by Etingof and
Varchenko and developed in quantum $K$-theory by the author and
Smirnov.  We explain how the monodromy eigenvectors recently
constructed by Tarasov and Varchenko fit the quantum/classical
duality between Bethe equations and tRS level sets.  At a root of unity, all partial shifts cancel in the Newton--Adams combinations of the tRS Hamiltonians, leaving precisely the operators obtained by Frobenius action.  This proves compatibility of the joint eigenvalue problem with Frobenius action in every rank and at every root order.
\end{abstract}
\maketitle
\enlargethispage{2pt}
\setcounter{tocdepth}{1}
\tableofcontents

\section{Introduction}
\label{sec:intro}

\subsection{Motivation and the Frobenius Program}

Recent work on the arithmetic properties of integrable connections has uncovered
relations between Frobenius action and classical spectral geometry.
Etingof and Varchenko \cites{EV24a,EV24b} initiated this study for
periodic pencils of flat connections and their $p$-curvature.
In quantum $K$-theory, the author and Smirnov \cite{KS26} showed
that full-cycle difference transport at roots of unity has the
spectrum of quantum multiplication at powered parameters.
The $p$-adic approximations of vertex functions studied by
Smirnov and Varchenko \cite{SV23} provide another instance of the
arithmetic structure of these enumerative functions.

In the present paper we shall examine Frobenius action from the
point of view of the trigonometric Ruijsenaars--Schneider (tRS)
system \cites{RS86,R87} associated with the cotangent bundle to the complete flag variety $T^*\Fl_n$.
The goal is to understand how
the recently constructed monodromy eigenvectors of
Tarasov and Varchenko \cite{TV26} enter the quantum/classical duality
between Bethe equations and tRS level sets.  Throughout the paper,
the difference operators act on K\"ahler parameters, while the
equivariant parameters $a_1,\ldots,a_n$ determine their spectrum.

Our construction is also motivated by the qKZ/tRS correspondence
of the author and Zeitlin \cite{KZ21}.  In that work a weighted
sum of components of a geometric qKZ solution produces a tRS
eigenfunction, which is identified with a vertex function in
Theorem~5.11.  This gives a geometric form of the
Matsuo--Cherednik correspondence \cites{M92,C94}.
It also has a natural interpretation through $3d$ mirror symmetry.
The variety $T^*\Fl_n$ is self-mirror: mirror symmetry exchanges
its K\"ahler and equivariant parameters, together with the
corresponding transformation of the cotangent parameter
\cite{D21} (Theorem~1 and Section~2.7).  On the integrable-system
side, the tRS model enjoys the corresponding self $3d$ mirror
symmetry, realized as bispectral self-duality between its
K\"ahler and equivariant difference eigenvalue problems
\cite{K23} (Section~2.4).
The projection identities of \cite{KZ21} (Proposition~5.5 and
Theorems~5.6,~5.10) act in equivariant variables.  Here we work
with the K\"ahler difference equations and use the vertex
eigenfunction theorem of \cite{K21} (Theorem~2.10) as our
geometric input.  The normalization is related to that of
\cite{D21} (Section~2.6, Propositions~1--3) below.
For a review of quantum/classical duality and the relation between
quantum geometry and integrable systems, see \cite{K23}.

In the qKZ/tRS construction of \cite{KZ21}, one starts with a
fundamental matrix of the quantum difference connection.  Its columns are
independent vector-valued solutions; its rows are coordinates in
a chosen basis, indexed for a complete flag by $\sigma\in S_n$.
Fix one column, labeled by $\tau$, and write its entries as
$I_{\sigma,\tau}$.  The projection of this vector solution gives
the scalar vertex.  After including the projection coefficients
in the components, this is $V_\tau=\sum_\sigma I_{\sigma,\tau}$.
Thus the component index and the choice of solution are different
indices.  We suppress $\tau$ when working with one column and
write $V=\sum_\sigma I_\sigma$.  Individual shifts mix the
entries through matrix transport, whereas the projected vertex
obeys the scalar tRS equations.  Our main theorem proves that
this eigenvalue problem is compatible with Frobenius action for
every complete flag and every root order.  The explicit descendant
periods used for the finite-sum presentation are defined in
Subsection~\ref{intro:components} below.

At a primitive root of unity, products of tRS Hamiltonians contain
many terms with partial shifts.  The Newton--Adams combinations
cancel these terms and leave only full turns around the coordinate
orbits.  We prove this operator identity in arbitrary rank and
then apply it to joint eigenfunctions.  For the finite-sum
construction, we start with the integral representation of the
vertex and replace integration in each auxiliary variable by a
weighted sum over $x,qx,\ldots,q^{p-1}x$.  Comparing the integrand
at two consecutive points gives the rational function which
propagates the weights.  Their periodicity is precisely the
Bethe condition at the powered parameters.
The resulting full-cycle multipliers lie on the corresponding
classical tRS level set and give a direct
finite-sum realization of the Frobenius equations.  For
$T^*\mathbb P^1$ both the ordinary equations and the component
connection can be calculated explicitly.

\subsection{K\"ahler Shifts and tRS Hamiltonians}

Let us first fix the geometric conventions and the normalization
of the difference operators.  We use the polarization convention
of \cite{KS26}, $TX=P+\hbar^{-1}P^*$, and fix a square root
$\hbar^{1/2}$.  Our parameters are
\[
\begin{array}{c|l}
q=\omega_p & \text{primitive root of unity of order }p\geq1\\
\hbar & \text{fiber dilation weight}\\
a_1,\ldots,a_n & \text{torus equivariant parameters}\\
z_1,\ldots,z_{n-1} & \text{K\"ahler parameters in the geometric connection}\\
x_{r,u} & \text{tautological roots and internal summation variables}.
\end{array}
\]
The order $p$ is not required to be prime.

Introduce homogeneous K\"ahler coordinates by
\begin{equation}\label{intro:kahler-coordinates}
 z_r=q\frac{\zeta_r}{\zeta_{r+1}},\qquad 1\leq r<n.
\end{equation}
Their simultaneous scale supplies the central character of
the $\GL(n)$ system.  Denote the shift $\zeta_i\mapsto q\zeta_i$
by $p_i$.  Then
\begin{equation}\label{intro:shifts}
 p_i\zeta_j=q^{\delta_{ij}}\zeta_jp_i,
 \qquad p_i z_r=q^{\delta_{ir}-\delta_{i,r+1}}z_rp_i,
 \qquad p_i a_j=a_jp_i.
\end{equation}
For a subset $I$, put $p_I=\prod_{i\in I}p_i$.  The
Macdonald--Cherednik tRS Hamiltonians in our normalization are
\begin{equation}\label{intro:hamiltonians}
 H_r=\sum_{|I|=r}b_Ip_I,\qquad
 b_I(\boldsymbol\zeta;\hbar)=\hbar^{-r(n-r)/2}
 \prod_{\substack{i\in I\\j\notin I}}
 \frac{\hbar\zeta_i-\zeta_j}{\zeta_i-\zeta_j}.
\end{equation}
In particular, $H_0=1$ and $H_n=p_1\cdots p_n$.

For generic $q$, the normalized fixed-point vertex $V_\tau$,
with its homogeneous character included, satisfies
\begin{equation}\label{intro:vertex-equations}
 H_rV_\tau=e_r(a_1,\ldots,a_n)V_\tau,
 \qquad H_nV_\tau=(a_1\cdots a_n)V_\tau.
\end{equation}
These equations were proved in \cite{K21} (Theorem~2.10).
The comparison with the modified vertex of \cite{D21} is given
in Section~\ref{flag:normalization}.  Notice that the homogeneous
character is needed to retain the eigenvalue $a_1\cdots a_n$
when the vertex is written in ratio coordinates.

\subsection{Orbit Graphs and Finite Sums}

Let us describe the finite graphs which appear in the calculation.
At $q^p=1$, multiplication of a nonzero coordinate by $q$ gives a
cycle of length $p$.  For the homogeneous K\"ahler coordinates,
the vertices are $q^{\boldsymbol m}\boldsymbol\zeta^0$ and the
directed edges are the shifts $p_i$.  The graph is a product of
$n$ cycles.  A term $b_Ip_I$ advances the coordinates indexed by
$I$ and assigns the coefficient $b_I$ to this step.  Products of
Hamiltonians therefore sum the weights of directed paths.
A full turn returns the coordinate to its starting value, but
may multiply a section by a nontrivial constant.  This constant
is recorded by $p_i^p$.

The finite vertex sum uses the same construction for its internal
variables.  Fix the K\"ahler parameters and choose nonzero initial
roots $x_\alpha^0$, with $R=n(n-1)/2$.  Independent shifts of
these roots give $p^R$ summation points.  A step $T_\alpha$
changes one root and carries the phase ratio $\Phi_\alpha$.
Since the ratios come from a common phase, they satisfy
\[
 \Phi_\alpha(\boldsymbol x)\Phi_\beta(T_\alpha\boldsymbol x)
 =\Phi_\beta(\boldsymbol x)\Phi_\alpha(T_\beta\boldsymbol x).
\]
Thus the weight of a path is unchanged when two adjacent steps
are interchanged.  To obtain periodic weights on the finite grid,
the product along every coordinate cycle must also equal one.
These conditions are the Bethe equations at the parameters
obtained by Frobenius action, $\mathcal B_{p,\alpha}=1$.
Here $\mathcal B_{p,\alpha}$ is the product of the $p$ successive
phase ratios in direction $\alpha$, defined in
\eqref{cyc:norms}; for one internal variable we write
$\mathcal B_p$, as in Section~\ref{sec:two}.

With these conditions imposed, a weighted total difference
cancels after summation: consecutive terms cancel along each
coordinate line, and the last term agrees with the first.
This elementary observation gives the finite functional and its
full-cycle multipliers.  The classical tRS/Bethe correspondence
then determines the eigenvalues after Frobenius action.  In rank one
we shall also use the same calculation to verify the ordinary
tRS equations and the first-order component system.

\subsection{From Component Solutions to Vertex Functions}
\label{intro:components}

Let us describe the integral meaning of the component presentation.
For a difference connection $p_i\Psi=\mathbf M_i\Psi$, a
fundamental solution is an invertible matrix $\Psi$ whose columns
are vector-valued solutions.  In an integral construction, a
common scalar phase is multiplied by a vector of rational weight
functions satisfying the connection identities and integrated
over a chosen cycle; the resulting period vector is a
vector-valued solution.  A collection of independent period vectors
provides the columns of a fundamental matrix.  Tarasov and Varchenko give the finite-sum
version of this construction in \cite{TV26} (Sections~3.2--3.6):
the weight-function identities ensure flatness, and nonzero closed
weighted sums give monodromy eigensections.  For the multiplicative
qKZ equations, see \cite{TV26} (Sections~5.4--5.5, in particular
Corollary~5.7 and equation~(5.19)).

In our notation, $\mathcal S_\Gamma[f]$ is the normalized
localization integral representing the $K$-theoretic quasimap
vertex of $T^*\Fl_n$, with descendant class $f$ inserted.
The integration variables are the auxiliary tautological roots
$x_{r,u}$; the scalar phase is \eqref{cyc:flag-phase}, and a
contour or residue prescription $\Gamma$ selects a vertex branch.
See \cite{K21} (equations~(2.13)--(2.16)) for the vertex integral.
The cycle, phase, homogeneous character, and normalization remain
fixed while the insertion varies.  We choose one such cycle and
suppress its label, writing $\mathcal S[f]$.

The explicit components of the finite presentation are descendant
periods for this common choice.  Set
$L_r=\prod_{u=1}^r x_{r,u}$, $L_0=1$, and
$\beta_r=L_r/L_{r-1}$.  For distinct $a_i$, define
\begin{equation}\label{intro:component-weights}
 w_\sigma(\boldsymbol x;\boldsymbol a)
 =\prod_{r=1}^{n-1}\prod_{s=r+1}^{n}
   \frac{\beta_r-a_{\sigma(s)}}{a_{\sigma(r)}-a_{\sigma(s)}},
 \qquad \sigma\in S_n.
\end{equation}
Successive Lagrange interpolation gives $\sum_\sigma w_\sigma=1$.
Applying the common vertex functional $\mathcal S$ gives
\begin{equation}\label{intro:component-sum}
 V=\mathcal S[1]
       =\sum_{\sigma\in S_n} I_\sigma,
       \qquad I_\sigma=\mathcal S[w_\sigma].
\end{equation}
Thus $\boldsymbol I_\Gamma=(\mathcal S_\Gamma[w_\sigma])_{\sigma\in S_n}$
is one vector of descendant periods.  The index $\sigma$ labels
its coordinates, while $\Gamma$ labels the chosen period vector.
In particular, $\mathcal S[1]$ is the vertex with unit insertion,
rather than the constant function one.  At a root of unity, we
use the weighted finite functional \eqref{cyc:functional-definition}
in place of the contour integral, following the discrete integration
construction of \cite{TV26} (Section~3.2).  The phase and its
normalization are described in Section~\ref{cyc:construction}.

This presentation allows us to apply the operator identities
directly to the component sum.  Once $V$ satisfies the joint
tRS equations, the same sum satisfies their Frobenius counterparts.
The rank-one calculation is given in Section~\ref{sec:two}.
For arbitrary flags, the descendant decomposition and the
full-cycle equations for each $I_\sigma$ are proved in
Section~\ref{cyc:explicit-components}.

\subsection{Bloch Solutions and Full-Cycle Transport}

A difference equation with periodic coefficients may have
solutions which acquire a multiplier after one period.
Such solutions are called Bloch solutions.  To see this in our
setting, write
$\boldsymbol\zeta(\boldsymbol m)=q^{\boldsymbol m}\boldsymbol\zeta^0$
and consider the system
\begin{equation}\label{intro:dynamical-system}
 \boldsymbol I(\boldsymbol m+e_i)
 =\mathbf M_i(\boldsymbol\zeta(\boldsymbol m))
       \boldsymbol I(\boldsymbol m),\qquad \boldsymbol m\in\Z^n.
\end{equation}
The product of $p$ consecutive transport matrices gives the
full-cycle monodromy.  On a common eigenline of these operators,
with nonzero eigenvalues $\lambda_i$, we have
\begin{equation}\label{intro:Bloch}
 \boldsymbol I(\boldsymbol m+pe_i)=\lambda_i\boldsymbol I(\boldsymbol m),
 \qquad V(\boldsymbol m+pe_i)=\lambda_iV(\boldsymbol m).
\end{equation}
For example, the scalar equation
$f(m+1)=b(q^mz_0)f(m)$ gives
\[
 f(m+p)=\left(\prod_{d=0}^{p-1}b(q^dz_0)\right)f(m).
\]
Taking $q=-1$ and $b(z)=\gamma(1-cz)$, we obtain
$\lambda=\gamma^2(1-c^2z_0^2)$ wherever this multiplier is nonzero.
The coordinate has returned to its starting point, while the
solution remembers its monodromy.  We retain this information
in the formal difference algebra by keeping $p_i^p$ as a shift
operator.  For the finite vertex sums, its multiplier will be
calculated directly from the phase ratios.

\subsection{Main Results}

The symmetric functions of powered spectral parameters can be
expressed as integral polynomials in the original elementary
symmetric functions.  Define these Newton--Adams polynomials by
\begin{equation}\label{intro:Adams}
 \mathcal A_{p,r}(e_1(y),\ldots,e_n(y))=e_r(y_1^p,\ldots,y_n^p).
\end{equation}
For $n=2$ and $r=1$, we recover the Newton polynomials
$P_p(E,D)$, where $P_p(u+v,uv)=u^p+v^p$.
Let $H_r^{[p]}$ denote the tRS Hamiltonian obtained by applying the Frobenius action to its coordinates, coupling, and shifts.  Our main
result identifies this operator with the Newton--Adams polynomial
in the original Hamiltonians.  It holds for the full commuting
family, for every complete flag, and for every root order.

\begin{maintheorem}
Let $n,p\geq1$ and let $q$ be a primitive $p$th root of unity.
The Hamiltonians \eqref{intro:hamiltonians} satisfy
$\mathcal A_{p,r}(H_1,\ldots,H_n)=H_r^{[p]}$, where
\begin{equation}\label{intro:main-identity}
 H_r^{[p]}
 =\hbar^{-pr(n-r)/2}\sum_{|I|=r}
   \prod_{\substack{i\in I\\j\notin I}}
   \frac{\hbar^p\zeta_i^p-\zeta_j^p}{\zeta_i^p-\zeta_j^p}\,p_I^p,
 \qquad 1\leq r\leq n.
\end{equation}
If $H_rV=e_r(a_1,\ldots,a_n)V$ for all $r$, then
\begin{equation}\label{intro:main-eigenproblem}
 H_r^{[p]}V=e_r(a_1^p,\ldots,a_n^p)V,
 \qquad 1\leq r\leq n.
\end{equation}
Thus the joint tRS eigenvalue equations are compatible with
Frobenius action for every complete flag and every root order.
\end{maintheorem}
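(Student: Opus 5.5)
The plan is to recast the Newton–Adams identity as a factorization of the generating function of the Hamiltonians. Put $D(u)=\sum_{r=0}^n(-u)^rH_r$. The $H_r$ commute pairwise (this is the standard commutativity of the Macdonald operators, which holds for any value of $q$). Hence every identity among symmetric functions holds when the $H_r$ are substituted for the elementary symmetric functions $e_r(y)$, and the subring generated by $H_1,\dots,H_n$ is commutative. For commuting scalars $y_i$ we have
\[
\prod_{k=0}^{p-1}\prod_{i=1}^n(1-q^kuy_i)=\prod_{i=1}^n(1-u^py_i^p).
\]
It follows that
\[
\prod_{k=0}^{p-1}D(q^ku)=\sum_{r=0}^n(-u^p)^r\,\mathcal A_{p,r}(H_1,\ldots,H_n).
\]
The first claim of the Main Theorem is therefore equivalent to the operator identity $\prod_{k=0}^{p-1}D(q^ku)=D^{[p]}(u^p)$, where $D^{[p]}(v)=\sum_r(-v)^rH_r^{[p]}$.

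Next I would expand the left-hand side. Moving all shifts to the right with \eqref{intro:shifts}, a sequence of subsets $(I_0,\ldots,I_{p-1})$ contributes
\[
(-u)^{\sum|I_k|}\,q^{\sum_k k|I_k|}\,
b_{I_0}(\boldsymbol\zeta)\,b_{I_1}(q^{I_0}\boldsymbol\zeta)\cdots
b_{I_{p-1}}(q^{I_0+\cdots+I_{p-2}}\boldsymbol\zeta)\,p^{\boldsymbol m}.
\]
Here $q^{J}\boldsymbol\zeta$ multiplies the coordinates in $J$ by $q$, and $\boldsymbol m=\sum_k\mathbf 1_{I_k}\in\{0,\ldots,p\}^n$. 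I group the terms by $\boldsymbol m$.

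If $\boldsymbol m=p\mathbf 1_I$, then necessarily $I_k=I$ for every $k$. For $i\in I$ and $j\notin I$, the ratio $\zeta_i/\zeta_j$ is scaled by $q^k$ at step $k$, and
\[
\prod_{k=0}^{p-1}\frac{\hbar q^kt-1}{q^kt-1}=\frac{\hbar^pt^p-1}{t^p-1}.
\]
The normalizations multiply to $\hbar^{-pr(n-r)/2}$. The sign is $(-1)^{pr}q^{rp(p-1)/2}=(-1)^{pr}(-1)^{(p-1)r}=(-1)^r$. This gives exactly the $I$-term of $(-u^p)^rH_r^{[p]}$, with $p_I^p$.

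It then remains to show that the total scalar coefficient $C_{\boldsymbol m}(\boldsymbol\zeta;\hbar)$ vanishes whenever some $m_i\notin\{0,p\}$. This is the main obstacle. My first attempt is a rigidity argument. The coefficient $C_{\boldsymbol m}$ is a rational function, homogeneous of degree zero in $\boldsymbol\zeta$, and its only possible poles lie on $q^a\zeta_i=\zeta_j$. I would show that these poles cancel in the sum by pairing sequences that differ by moving $i$ and $j$ between adjacent steps, which is the path-interchange cancellation of the orbit-graph picture. Once the poles are gone, $C_{\boldsymbol m}$ is a polynomial in $\hbar$ whose coefficients are degree-zero rational functions of $\boldsymbol\zeta$ without poles, hence constants. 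These constants can be computed in the limit $\zeta_1\gg\zeta_2\gg\cdots\gg\zeta_n$. In that limit $b_I\to\hbar^{c(I)}$ for an explicit exponent $c(I)$. The coefficient then reduces to a $q$-binomial-type sum $\sum\prod q^{(\cdots)}$, which vanishes because the Gaussian binomials $\binom{p}{m}_q$ are zero at a primitive $p$th root of unity for $0<m<p$. As a sanity check, the case $\hbar=1$ collapses to the displayed scalar identity.

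If the pole cancellation proves awkward, my fallback is Macdonald's antisymmetrization formula $D(u)=\Delta^{-1}\sum_w\varepsilon(w)\,\boldsymbol\zeta^{w\delta}\prod_i(1-u\hbar^{(w\delta)_i}p_i)$. I would then try to telescope the $p$-fold product through $\Delta$.

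Finally, the eigenvalue statement is immediate. Since $V$ is a joint eigenfunction of the commuting $H_r$ with eigenvalues $e_r(\boldsymbol a)$, we get $\mathcal A_{p,r}(H)V=\mathcal A_{p,r}(e(\boldsymbol a))V=e_r(a_1^p,\ldots,a_n^p)V$ by \eqref{intro:Adams}. The identity above then turns the left-hand side into $H_r^{[p]}V$.
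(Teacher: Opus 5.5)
Your reduction is sound and matches the paper's. The generating function $\prod_d\mathcal H(q^du)$, the expansion over subset sequences with endpoints in $\{0,\ldots,p\}^n$, the unique sequence for $\boldsymbol m=p\mathbf 1_I$ with sign $(-1)^r$, and the eigenvalue consequence are all correct. So is your asymptotic step: $b_I\to\prod_{i\in I}\hbar^{(n+1)/2-i}$, so the limit of $C_{\boldsymbol m}$ is the coefficient of $u^{|\boldsymbol m|}p^{\boldsymbol m}$ in $\prod_i\prod_d(1-q^du\,\hbar^{(n+1)/2-i}p_i)=\prod_i(1-u^p\hbar^{p((n+1)/2-i)}p_i^p)$, which vanishes for partial $\boldsymbol m$.

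\textbf{The gap.} The step carrying all the content is that $C_{\boldsymbol m}$ has no poles when some $0<m_i<p$. You only assert this, and the mechanism you name does not produce it.
\begin{itemize}
\item \emph{The swap cancels only the crossing pole.} Swapping $i$ and $j$ between adjacent steps matches the poles on the crossing divisor $q^{k_i}\zeta_i=q^{k_j}\zeta_j$. That cancellation holds for indeterminate $q$. The other poles of the two paired weights lie on $\zeta_i/\zeta_j=q^{\delta-1}$ and $\zeta_i/\zeta_j=q^{\delta+1}$, which coincide only when $q^2=1$.
\item \emph{For $p\geq3$ the cancellation is not a pairing.} It is a cyclotomic sum over several sequences. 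For $n=2$, $p=3$, $\boldsymbol m=(1,1)$, the pole on $\zeta_2=q\zeta_1$ occurs in $(\{1\},\{2\},\varnothing)$, $(\{1\},\varnothing,\{2\})$ and $(\varnothing,\{1\},\{2\})$. Their weights are $q$, $q^2$ and $1$ times the same function, so the pole disappears only because $1+q+q^2=0$.
\item \emph{Higher-order poles occur.} For $n=2$, $p=3$, the weight of $(\{1\},\{2\},\{2\})$ has a double pole on $\zeta_1=\zeta_2$. Matching simple residues cannot handle this.
\item \emph{Even $p=2$ needs more than pairing.} There your strategy is essentially the paper's proof of Lemma~\ref{flag:bipartition}. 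Yet the residue on the wrap-around divisor $v_1=-v_2$ is not killed by pairing: it equals $(P_+-P_-)S_{N-2}$ and vanishes only by induction.
\end{itemize}
A general-$p$ analogue of that residue induction is exactly what your argument needs. Neither it nor the antisymmetrization fallback is supplied.

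\textbf{How the paper avoids this.} It uses no residues at all. Lemma~\ref{flag:centrality} shows that $\mathcal A_{p,r}(H)$ is the radial part of $e_r(Y_1^p,\ldots,Y_n^p)$, and that symmetric functions of the $Y_i^p$ are central at $q^p=1$. The centrality comes in two steps:
\begin{itemize}
\item Symmetric functions of the $X_i^p$ are invariant under the extended affine Weyl group, hence commute with the Hecke generators by the Bernstein relation, hence are central.
\item Fourier duality transfers this to the $Y_i^p$.
\end{itemize}
Consequently $\mathcal A_{p,r}(H)$ commutes with $e_1(\boldsymbol\zeta)$. Comparing normal-ordered coefficients of the commutator gives $c_{\boldsymbol m}(\boldsymbol\zeta)\sum_i(q^{m_i}-1)\zeta_i=0$, so $p\mid m_i$ for every $i$ whenever $c_{\boldsymbol m}\neq0$. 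Combined with $0\leq m_i\leq p$, only the shifts $p_I^p$ survive, and your own full-orbit computation finishes the proof. This centrality input is the missing idea; the alternative would be a complete residue induction in every order $p$.
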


The main achievement is the cancellation identity in the formal
difference algebra.  The classical tRS/Bethe correspondence already
determines the spectral level sets after applying the Frobenius action to
the parameters.  At the operator level, one must also account for
all partial shifts in products of Hamiltonians.  We prove that
their coefficients vanish in the Newton--Adams combinations and
calculate the surviving full-cycle coefficients exactly.
Theorem~\ref{flag:frobenius} establishes this in arbitrary rank using affine Hecke centrality
and bounds on the shift exponents.  For $T^*\mathbb P^1$, the
weighted matching calculation gives a direct proof for every $p$;
in arbitrary rank, Proposition~\ref{flag:direct-order-two} gives
an additional direct proof at $q=-1$.  These are identities in
characteristic zero, including even and composite root orders.

Applying the operator identity to a joint eigenfunction gives
Corollary~\ref{flag:eigenfunction-frobenius}.  The ordinary vertex
equations of \cite{K21} (Theorem~2.10) supply the geometric input,
and the Newton--Adams polynomial sends their eigenvalues to
$e_r(\boldsymbol a^p)$.  Thus the Frobenius compatibility of the
vertex eigenvalue problem follows for every complete flag.

There is also a direct quantum $K$-theoretic interpretation.
According to \cite{KPSZ21} (Theorem~4.6), the ratios
$L_d/L_{d-1}$ give the eigenvalues of quantum multiplication by
quantum determinant classes and their duals.  After the
normalization \eqref{cyc:external-ratios}, these are the tRS
momenta.  A geometric K\"ahler shift inserts $L_d$ into the vertex,
so its full-cycle multiplier is $L_d^p$, in agreement with
\cite{KS26}.  The finite functional is constructed by the cyclic
summation mechanism of Tarasov--Varchenko \cite{TV26}.

The cancellation used in the original dynamical/tRS correspondence
occurs in the auxiliary variables of the vertex representation.
K\"ahler shifts insert tautological classes into the phase, and
the identities underlying the scalar eigenvalue equation remove
internal twisted differences.  Integration over a compatible
cycle annihilates these differences.  In a finite orbit sum,
consecutive weighted terms cancel by reindexing, leaving a boundary
term proportional to $\mathcal B_{p,\alpha}-1$ in each internal direction.
Here $\mathcal B_{p,\alpha}$ is the product of the phase factors
around that coordinate cycle.  It is a rational function of the
equivariant parameters $a_i$, the K\"ahler parameters $z_r$, and
the tautological roots $x_{r,u}$, with $q$ and $\hbar$ fixed;
see \eqref{cyc:norms}.  The Bethe equations
$\mathcal B_{p,\alpha}=1$ close these cycles and remove
the boundary terms.  Their role is therefore to make this finite
summation consistent with the difference identities.  The
calculation for $T^*\mathbb P^1$ exhibits the component and tRS
residuals explicitly.  For complete flags,
Theorem~\ref{cyc:closure} proves that a closed weighted orbit
annihilates every internal twisted difference.

The cancellation specific to the Frobenius theorem takes place
in the external difference operators.  Expanding
$\mathcal A_{p,r}(H)$ produces terms with partial shifts
$p_i^{m_i}$, $0<m_i<p$.  At $q^p=1$ their total coefficients
vanish, leaving only $p_I^p$.  The remaining coefficients are
products of the original $b_I$ along full coordinate orbits;
the cyclotomic product turns them into the Frobenius substitutions
in \eqref{intro:main-identity}.  This identity holds before any
operator is applied to a vertex and uses no Bethe equations.
Its purpose is to identify the polynomial in quantum Hamiltonians
with the actual tRS family after Frobenius action.

For the finite flag sums, internal closure gives explicit Bloch
multipliers, and the known tRS/Bethe correspondence places them
on the classical level set at the powered parameters.  The
external operator identity then gives their Frobenius eigenvalue
equations in Theorem~\ref{cyc:finite-frobenius}, for every rank
and root order.  This direct construction and the eigenfunction
consequence of the main theorem give compatible realizations of
the same Frobenius spectral data.

\subsection{Structure of the Paper}

In Section~\ref{sec:two} we study the vertex for
$T^*\mathbb P^1$ and its tRS equations.  We begin with the
calculation at $q=-1$, derive the Bethe equation from the
boundary term, and construct the first-order component system.
We then perform the finite-sum calculation for arbitrary $p$.
The operator identity is proved using a weighted graph on the
ratio orbit $s_j=q^js_0$.  Matchings give the Newton corrections,
while the two full winding cycles give the Frobenius terms.

Section~\ref{sec:flag} is devoted to complete flags.  We express
products of Hamiltonians as weighted sequences of subsets and
prove the cancellation of partial shifts using affine Hecke
centrality.  At $q=-1$ we give a second proof based on a signed
bipartition identity.  We then discuss the classical spectral
fiber and construct finite vertex sums from the flag phase.
The Bethe closure conditions give their Frobenius eigenvalue
equations, and the explicit descendants give the decomposition
$V=\sum_\sigma I_\sigma$ in every rank.

\subsection{Acknowledgements}

I would like to thank Andrey Smirnov for useful discussions.
I acknowledge support from the Beijing Natural Science Foundation
(BJNSF) under grant IS26008.  I used ChatGPT-6.1 to study the details
of the cancellations after applying the Frobenius action.

\section{The tRS Equations for \texorpdfstring{$T^*\mathbb P^1$}{T*P1}}
\label{sec:two}

The simplest complete flag variety is $\mathbb P^1$, and its
cotangent bundle $T^*\mathbb P^1$ describes a two-particle tRS
model.  We shall work out this example in detail.
We keep the framing parameters $a_1,a_2$ fixed and use the global
parameter $\hbar$ in the polarization
$TX=P+\hbar^{-1}P^*$.  The difference operators below act on the
K\"ahler variables.  We shall use the notation
\[
 E=a_1+a_2,\qquad D=a_1a_2,\qquad
 (b;q)_d=\prod_{j=0}^{d-1}(1-bq^j),
\]
and write $p_z$ for the shift $z\mapsto qz$.  In particular,
$p_zz=qzp_z$.

\subsection{The Vertex and Hamiltonians}
\label{two:vertex}

Let us begin with the relation between the vertex and the
two-particle tRS model.  The generic-$q$ statement for normalized
vertices of complete flags is given in
\cite{K21} (Theorem~2.10).  In rank one we can derive the equation
from the series itself and describe the character which brings it
to the centered tRS normalization.

We choose the K\"ahler coordinate $z$ so that the shifted parameter
in the rank-one phase of \cite{KS26} (\S2.1) is $z/\hbar$.
At the fixed point $a_i$, where $\{i,j\}=\{1,2\}$, the bare
vertex with constant coefficient one reads
\begin{equation}\label{two:vertex-series}
 V_i^{\mathrm{bare}}(z)=\sum_{d\geq0}
 \frac{(\hbar;q)_d(\hbar a_i/a_j;q)_d}
      {(q;q)_d(qa_i/a_j;q)_d}
 \left(\frac z\hbar\right)^d.
\end{equation}
The framing parameters enter as coefficients.  Introduce a
difference character $e_{a_i}$ with $p_ze_{a_i}=a_i e_{a_i}$ and
write $\mathcal V_i=e_{a_i}V_i^{\mathrm{bare}}$.  Such a character
may be regarded either as a formal symbol with the prescribed
shift action or as a section of a rank-one difference local system.

It will be convenient to introduce
\begin{equation}\label{two:RJ}
 R(y)=(y-a_1)(y-a_2),\qquad
 J(y)=(\hbar y-a_1)(\hbar y-a_2).
\end{equation}
The scalar difference operator associated with the vertex is
\begin{equation}\label{two:native-operator}
 \mathcal L
 =R(p_z)-\frac z\hbar J(p_z)
 =(1-\hbar z)p_z^2-E(1-z)p_z
       +D(1-z/\hbar).
\end{equation}
Here and below coefficients precede the shifts.  The recurrence
of \eqref{two:vertex-series} is precisely the equation of this
operator, as follows.  Its relation to the tRS Hamiltonians will
be given by \eqref{two:operator-conversion}.

\begin{proposition}\label{two:vertex-equation}
For generic parameters, the formal series
\eqref{two:vertex-series} obeys $\mathcal L\mathcal V_i=0$.
\end{proposition}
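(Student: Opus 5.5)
The plan is to reduce $\mathcal L\mathcal V_i=0$ to a termwise recurrence for the coefficients of \eqref{two:vertex-series}. Write $\mathcal V_i=\sum_{d\geq0}c_d\,e_{a_i}(z/\hbar)^d$, where
\[
 c_d=\frac{(\hbar;q)_d(\hbar a_i/a_j;q)_d}{(q;q)_d(qa_i/a_j;q)_d}.
\]
For generic parameters the denominators $(q;q)_d$ and $(qa_i/a_j;q)_d$ do not vanish, so the series is a well-defined formal object.

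\emph{Action of the operator.} First compute the action of the shift on a single monomial. Since $p_ze_{a_i}=a_ie_{a_i}$ and $p_zz=qzp_z$, we have $p_z^k\bigl(e_{a_i}z^d\bigr)=(a_iq^d)^k e_{a_i}z^d$. Hence any polynomial $F(p_z)$ acts diagonally:
\[
 F(p_z)\bigl(e_{a_i}z^d\bigr)=F(a_iq^d)\,e_{a_i}z^d .
\]
Because coefficients precede shifts in \eqref{two:native-operator}, the term $\frac z\hbar J(p_z)$ first applies $J(p_z)$ diagonally and then multiplies by $z/\hbar$. This raises the degree by one. Applying $\mathcal L$ termwise therefore gives
\[
 \mathcal L\mathcal V_i=\sum_{d\geq0}\Bigl(c_dR(a_iq^d)-c_{d-1}J(a_iq^{d-1})\Bigr)e_{a_i}(z/\hbar)^d ,
\]
with the convention $c_{-1}=0$. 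The problem is thus equivalent to the recurrence $c_dR(a_iq^d)=c_{d-1}J(a_iq^{d-1})$ for all $d\geq0$.

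\emph{Checking the recurrence.} At $d=0$ both sides vanish, since $R(a_i)=0$; this is exactly why the series may start at $d=0$ with $c_0=1$. For $d\geq1$, factor both sides using \eqref{two:RJ}:
\[
 R(a_iq^d)=a_ia_j(1-q^d)(1-q^da_i/a_j),
 \qquad
 J(a_iq^{d-1})=a_ia_j(1-\hbar q^{d-1})(1-\hbar q^{d-1}a_i/a_j).
\]
In each case the two sign flips cancel. Their ratio $J(a_iq^{d-1})/R(a_iq^d)$ is then precisely $c_d/c_{d-1}$ computed from the $q$-Pochhammer symbols. This proves the recurrence, and hence $\mathcal L\mathcal V_i=0$.

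\emph{Main obstacle.} Nothing here is deep. The only real care is the bookkeeping of conventions: that $z/\hbar$ sits to the left of $J(p_z)$, that the character $e_{a_i}$ contributes the factor $a_i$ in the argument of $R$ and $J$, and that the factors $a_ia_j$ and the minus signs match. I would also check the expanded form of \eqref{two:native-operator} against $R(p_z)-\frac z\hbar J(p_z)$. This needs $J(p_z)=\hbar^2p_z^2-\hbar Ep_z+D$, which gives the coefficients $1-\hbar z$, $-E(1-z)$ and $D(1-z/\hbar)$.
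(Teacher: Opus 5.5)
Your proposal is correct and follows the paper's proof. Both compare coefficients of $e_{a_i}z^d$, reduce to the recurrence $R(a_iq^d)v_d=\hbar^{-1}J(a_iq^{d-1})v_{d-1}$ (your $c_d=\hbar^d v_d$ normalization gives the same relation), match the consecutive-coefficient ratio with the $q$-Pochhammer factors, and use $R(a_i)=0$ to kill the constant term.
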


\begin{proof}
Write $V_i^{\mathrm{bare}}=\sum_{d\geq0}v_dz^d$ and compare the
coefficients of $e_{a_i}z^d$.  For $d\geq1$ the equation becomes
\[
 R(a_iq^d)v_d=\hbar^{-1}J(a_iq^{d-1})v_{d-1}.
\]
Solving for the ratio of consecutive coefficients gives
\[
 \frac{v_d}{v_{d-1}}
 =\frac1\hbar
   \frac{(1-\hbar q^{d-1})(1-\hbar(a_i/a_j)q^{d-1})}
        {(1-q^d)(1-(a_i/a_j)q^d)},
\]
which agrees with \eqref{two:vertex-series}.  The constant term
of the equation vanishes since $R(a_i)=0$.
\end{proof}

The phase of the vertex provides another description of this
equation.  Its ratio under an internal shift is
\begin{equation}\label{two:phase-ratio}
 \phi(x;z)=\frac z\hbar
       \prod_{i=1}^2\frac{1-\hbar x/a_i}{1-qx/a_i}
       =\frac z\hbar\frac{J(x)}{R(qx)}.
\end{equation}
For a shift-invariant period functional we therefore have
$\langle R(x)\rangle=(z/\hbar)\langle J(x)\rangle$.
A K\"ahler shift inserts $x$, so this relation gives
\eqref{two:native-operator}.  At a root of unity the same argument
will be realized by a finite sum, whose boundary term we shall
compute explicitly.

For a primitive $p$th root of unity the vertex equation admits
the finite cyclic construction below.  The generic series needs
regularization: its coefficient of $z^p$ contains
$(1-q^p)^{-1}$.  The regularized fundamental matrix is constructed
in \cite{KS26} (Theorem~1.2).  Here we construct finite solutions
directly from the rational phase ratios.

\label{two:kahler-hamiltonian}

To pass to homogeneous K\"ahler coordinates, let
\begin{equation}\label{two:kahler-dictionary}
 s=\frac{\zeta_1}{\zeta_2}=\frac zq,
 \qquad
 p_i g(\zeta_1,\zeta_2)
 =g(\zeta_1,\ldots,q\zeta_i,\ldots,\zeta_2).
\end{equation}
The shifts $p_i$ leave $a_1,a_2$ and $\hbar$ unchanged.
Introduce the coefficients
\begin{equation}\label{two:AB}
 A_{\hbar}(s)=\frac{1-\hbar s}{\hbar^{1/2}(1-s)},
 \qquad
 B_{\hbar}(s)=\frac{\hbar-s}{\hbar^{1/2}(1-s)}.
\end{equation}
Then the centered Hamiltonians take the form
\begin{equation}\label{two:hamiltonians}
 H_1=A_{\hbar}(s)p_1+B_{\hbar}(s)p_2,
 \qquad H_2=p_1p_2.
\end{equation}
Let $\chi$ be a homogeneous character satisfying
\begin{equation}\label{two:homogeneous-character}
 p_1\chi=\hbar^{1/2}\,\chi,
 \qquad p_2\chi=D\hbar^{-1/2}\chi.
\end{equation}
By a character we mean a one-dimensional representation of the
lifted K\"ahler shift lattice $\Z^2$.  Above a base point
$\boldsymbol\zeta^0$ we have $\zeta_i=q^{m_i}\zeta_i^0$, and the
character is given by
\[
 \chi(m_1,m_2)=\chi(0,0)
       \hbar^{m_1/2}(D/\hbar^{1/2})^{m_2}.
\]
The constants in \eqref{two:homogeneous-character} are therefore
its eigenmultipliers under the two K\"ahler $q$-shifts.  For
generic $q$, a choice of logarithms represents $\chi$ locally by
a possibly multivalued monomial in $\zeta_1,\zeta_2$.  The formal
rank-one local system description applies as well.  At a root of
unity the coordinates return after $p$ steps, whereas the section
may acquire a Bloch multiplier.
For $V(\zeta)=\chi(\zeta)\mathcal V(qs)$ this gives the following
actions on $\mathcal V$:
$p_1=\hbar^{1/2}\,p_z$ and
$p_2=D\hbar^{-1/2}p_z^{-1}$.
Substitution into the Hamiltonian gives
\begin{align}\label{two:operator-conversion}
 \chi^{-1}(H_1-E)\chi
 &=\frac{1-\hbar z/q}{1-z/q}p_z
   +D\frac{1-z/(q\hbar)}{1-z/q}p_z^{-1}-E\notag\\
 &=p_z^{-1}(1-z)^{-1}\mathcal L.
\end{align}
The order of multiplication in the last expression is essential.
We conclude that
\begin{equation}\label{two:vertex-tRS}
 \mathcal L\mathcal V=0
 \quad\Longrightarrow\quad
 H_1V=EV,\qquad
 H_2V=DV.
\end{equation}
For the vertex at a fixed point, the character $\chi e_{a_i}$
has multipliers
$\hbar^{1/2} a_i$ and $a_j/\hbar^{1/2}$.
Thus the recurrence of the vertex series produces the tRS
equations in K\"ahler parameters.

\subsection{Bethe Equations from the Eigenvalue Problem}
\label{two:quadratic-cancellation}
\label{two:finite-two}

The first nontrivial root of unity is $q=-1$.  Here the vertex
is a sum of two terms, and its tRS equation leaves a boundary
condition which is precisely the Bethe equation.  The combination
$H_1^2-2H_2$ then gives the equation precomposed with the
Frobenius action.

Let $q=-1$, hence $s=-z$.  We keep $x\ne0$ fixed along the
K\"ahler orbit $z\mapsto-z$ and assume $R(x)R(-x)\ne0$.
The partial weights $\mu_0,\mu_1$ and the full-cycle factor
$\mathcal B_2$ are
\begin{align}\label{two:C-two}
 \mu_0&=1,\qquad
 \mu_1=\frac z\hbar\frac{J(x)}{R(-x)},\notag\\
 \mathcal B_2=\phi(x;z)\phi(-x;z)
   &=\left(\frac z\hbar\right)^2
          \frac{J(x)J(-x)}{R(x)R(-x)}.
\end{align}
With $e_x$ satisfying $p_ze_x=xe_x$ on the lifted orbit, the
native and normalized vertices read
\begin{equation}\label{two:finite-two-period}
 \mathcal V_x=e_x(1+\mu_1),\qquad
 V_x=\chi e_x(1+\mu_1).
\end{equation}
Since $\mu_1(-z)=-\mu_1(z)$, the K\"ahler shifts give
\begin{equation}\label{two:finite-two-shifts}
 p_1V_x=\chi e_x\hbar^{1/2}\,x(1-\mu_1),\qquad
 p_2V_x=\chi e_x\frac D{\hbar^{1/2}\,x}(1-\mu_1).
\end{equation}
Acting with the Hamiltonian, we obtain for $H_1V_x-EV_x$
\begin{align}\label{two:finite-two-direct-action}
 \frac{(H_1-E)V_x}{\chi e_x}
 &=\frac{[(1-\hbar s)x+D(1-s/\hbar)/x](1-\mu_1)}{1-s}
       -E(1+\mu_1)\notag\\
 &=\frac{R(x)+(z/\hbar)J(x)
          -\mu_1R(-x)-(z/\hbar)\mu_1J(-x)}{x(1-s)}\notag\\
 &=\frac{(1-\mathcal B_2)R(x)}{x(1-s)}.
\end{align}
Notice that the middle terms cancel by
$\mu_1R(-x)=(z/\hbar)J(x)$, while the last term is
$-\mathcal B_2R(x)$.  Homogeneity also gives $H_2V_x=DV_x$.
The adjacent terms of the finite sum have therefore canceled,
and the two endpoints agree when $\mathcal B_2=1$.
For the native scalar equation the same calculation gives
\begin{equation}\label{two:finite-two-defect}
 \mathcal L\mathcal V_x=e_x(1-\mathcal B_2)R(x).
\end{equation}

In terms of $U=x^2$, $A_i=a_i^2$, and $Z=z^2$, the closure
condition $\mathcal B_2=1$ becomes the Bethe equation
\begin{equation}\label{two:Bethe-two}
 (U-A_1)(U-A_2)
 =Z\hbar^2(U-\hbar^{-2}A_1)(U-\hbar^{-2}A_2).
\end{equation}
On each branch, $x$ remains fixed under $z\mapsto-z$, and
\eqref{two:finite-two-direct-action} gives $H_1V_x=EV_x$.
The polynomial parts at the two orbit points are $1+\mu_1$ and
$1-\mu_1$.  They cannot vanish simultaneously, so the resulting
cyclic section is nonzero.

Let us next consider the square of the Hamiltonian.  The relations
$p_1s=-sp_1$ and $p_2s=-sp_2$ give
\begin{align}\label{two:H-square-expanded}
 H_1^2
 &=A_{\hbar}(s)A_{\hbar}(-s)p_1^2
   +B_{\hbar}(s)B_{\hbar}(-s)p_2^2\notag\\
 &\quad+
 \bigl[A_{\hbar}(s)B_{\hbar}(-s)
       +B_{\hbar}(s)A_{\hbar}(-s)\bigr]p_1p_2.
\end{align}
The coefficient multiplying the mixed shift reduces to
\begin{equation}\label{two:mixed-two}
 \frac{(1-\hbar s)(\hbar+s)
              +(\hbar-s)(1+\hbar s)}
      {\hbar(1-s^2)}=2.
\end{equation}
For the two pure shifts we have
\[
 A_{\hbar}(s)A_{\hbar}(-s)=A_{\hbar^2}(s^2),
 \qquad
 B_{\hbar}(s)B_{\hbar}(-s)=B_{\hbar^2}(s^2).
\]
The mixed term is thus removed by subtracting $2H_2$:
\begin{equation}\label{two:newton-square}
 H_1^2-2H_2
 =A_{\hbar^2}(s^2)p_1^2+B_{\hbar^2}(s^2)p_2^2.
\end{equation}
On our finite vertex, the eigenmultipliers of $p_1^2$ and $p_2^2$
are $\hbar x^2$ and $D^2/(\hbar x^2)$, respectively.  It follows
that
\begin{equation}\label{two:square-direct-action}
 (H_1^2-2H_2)V_x
 =\frac{(1-\hbar^2Z)U+D^2(1-Z/\hbar^2)/U}{1-Z}\,V_x.
\end{equation}
By the Bethe equation \eqref{two:Bethe-two}, the coefficient on
the right is $a_1^2+a_2^2$, and therefore
\begin{equation}\label{two:square-on-wave}
 \bigl[A_{\hbar^2}(s^2)p_1^2
       +B_{\hbar^2}(s^2)p_2^2\bigr]V_x
 =(a_1^2+a_2^2)V_x.
\end{equation}
The closed finite vertex therefore solves both the original tRS
equations and their precomposition with the Frobenius action.
For the latter equations, the full-cycle multipliers and
\eqref{two:Bethe-two} already suffice; the ordinary equation
$H_1V_x=EV_x$ provides another derivation.

\subsection{Components of the K\"ahler Connection}
\label{two:component-connection}

The scalar vertex and its first K\"ahler shift determine a
first-order vector equation.  We shall choose an interpolation
basis in which the sum of the two components recovers the vertex.
This construction works for every $q$ and, after the homogeneous
gauge, relates the vector equation to the tRS system.  For the
finite cyclic vertex it gives the one-step component equations
as well as their full-cycle spectral relations.
Assume $a_1\ne a_2$ and introduce
\[
 w_1(y)=\frac{y-a_2}{a_1-a_2},\qquad
 w_2(y)=\frac{a_1-y}{a_1-a_2},\qquad w_1+w_2=1.
\]
For a solution $\mathcal V$ of the scalar vertex equation, define
\begin{equation}\label{two:interpolation-components}
 \Phi_1=\frac{p_z\mathcal V-a_2\mathcal V}{a_1-a_2},
 \qquad
 \Phi_2=\frac{a_1\mathcal V-p_z\mathcal V}{a_1-a_2}.
\end{equation}
Writing $\Phi=(\Phi_1,\Phi_2)^t$, we obtain the change of basis
\begin{equation}\label{two:interpolation-basis}
 \Phi=B
 \begin{pmatrix}\mathcal V\\p_z\mathcal V\end{pmatrix},
 \qquad
 B=\frac1{a_1-a_2}
 \begin{pmatrix}-a_2&1\\a_1&-1\end{pmatrix},
 \qquad
 B^{-1}=\begin{pmatrix}1&1\\a_1&a_2\end{pmatrix}.
\end{equation}
The scalar equation gives a K\"ahler connection on the column
$(\mathcal V,p_z\mathcal V)^t$ with companion matrix
\begin{equation}\label{two:companion}
 \mathsf M(z)=
 \begin{pmatrix}
 0&1\\
 -D(1-z/\hbar)/(1-\hbar z)&E(1-z)/(1-\hbar z)
 \end{pmatrix}.
\end{equation}
In the interpolation basis this becomes
\begin{equation}\label{two:component-system}
 p_z\Phi=K(z)\Phi,\qquad K(z)=B\mathsf M(z)B^{-1}.
\end{equation}
An explicit expression follows by setting
\[
 r_1=a_1(\hbar a_1-a_2),\qquad
 r_2=a_2(\hbar a_2-a_1).
\]
We find
\begin{equation}\label{two:component-matrix}
 K(z)=
 \begin{pmatrix}a_1&0\\0&a_2\end{pmatrix}
 +\frac{z(\hbar-1)}
        {\hbar(1-\hbar z)(a_1-a_2)}
   \begin{pmatrix}r_1&r_2\\-r_1&-r_2\end{pmatrix}.
\end{equation}
In this basis $K(0)=\diag(a_1,a_2)$, while the determinant is
$\det K(z)=D(1-z/\hbar)/(1-\hbar z)$.

For the finite period \eqref{two:cyclic-period} of order $p$,
the same change of basis gives
\begin{align}\label{two:finite-component-sum}
 \Phi_\beta
 &=e_x\sum_{j=0}^{p-1}\mu_j(x;z)w_\beta(q^jx),
       \qquad \beta=1,2,\notag\\
 V_x
 &=\chi e_x\sum_{j=0}^{p-1}\mu_j(x;z)
   =\chi(\Phi_1+\Phi_2)=I_1+I_2,\qquad I_\beta=\chi\Phi_\beta.
\end{align}
Indeed, the formulas for $\Phi_\beta$ follow from
\eqref{two:interpolation-components} and the shift relation
$p_z(e_x\mu_j)=e_xq^jx\mu_j$.  The scalar boundary term has the
following vector form:
\begin{equation}\label{two:component-defect}
 p_z\Phi-K(z)\Phi
 =\frac{e_x(1-\mathcal B_p)R(x)}
        {(a_1-a_2)(1-\hbar z)}
   \begin{pmatrix}1\\-1\end{pmatrix}.
\end{equation}
To see this, the residual of the companion equation is
$(0,\mathcal L\mathcal V_x/(1-\hbar z))^t$; multiplication by
$B$ gives the expression above.  The Bethe condition
$\mathcal B_p=1$ consequently yields the component system
\eqref{two:component-system}.

After applying the homogeneous gauge, the column $I=(I_1,I_2)^t$
obeys
\begin{equation}\label{two:homogeneous-component-system}
 p_1I=K_1(z)I,\qquad p_2I=K_2(z)I,\qquad
 K_1(z)=\hbar^{1/2}\,K(z),\qquad
 K_2(z)=\frac D{\hbar^{1/2}}K(z/q)^{-1}.
\end{equation}
We work where these matrices are defined and invertible.  The
argument $z/q$ in $K_2$ comes from solving
$\Phi(z)=K(z/q)\Phi(z/q)$ for the backward shift.
The two transports satisfy
\[
 K_2(qz)K_1(z)=K_1(z/q)K_2(z)=D\,\id.
\]
Hence $p_1p_2I=DI$.  The sum $V=I_1+I_2$ then satisfies the
scalar K\"ahler Ruijsenaars equations by
\eqref{two:operator-conversion}.

The mechanism can also be expressed directly in terms of the
rational weights.  Let $w=(w_1,w_2)^t$ and
$\varepsilon=(1,-1)^t$.  Their native difference identity reads
\begin{equation}\label{two:weight-difference}
 xw-K(z)w=\phi(x;z)U(qx;z)-U(x;z),\qquad
 U(x;z)=-\frac{R(x)}{(a_1-a_2)(1-\hbar z)}\varepsilon.
\end{equation}
The left-hand side equals
$[R(x)-(z/\hbar)J(x)]\varepsilon/
[(a_1-a_2)(1-\hbar z)]$, whereas
$\phi(x;z)R(qx)=(z/\hbar)J(x)$.
For the homogeneous equations the corresponding rational
functions are
\begin{equation}\label{two:homogeneous-differences}
 U_1=\hbar^{1/2}\,U,\qquad
 U_2=\frac{R(x)}
  {(a_1-a_2)\hbar^{1/2}\,x(1-z/(q\hbar))}
  \begin{pmatrix}a_2\\-a_1\end{pmatrix}.
\end{equation}
With $\rho_1=\hbar^{1/2}\,x$ and $\rho_2=D/(\hbar^{1/2}\,x)$,
we have
$\rho_iw-K_iw=\phi U_i(qx)-U_i(x)$.
Summing this difference over the finite orbit gives
\[
 (p_i-K_i)I=\chi e_x(\mathcal B_p-1)U_i(x;z).
\]
Thus both K\"ahler directions have the same Bethe boundary
obstruction.

It remains to examine the row $\ell=(1,1)$ which takes the
component sum.  In terms of $s=z/q$ we obtain
\[
 \ell K_1=\hbar^{1/2}\,(a_1,a_2),\qquad
 \ell K_2=
 \frac{E(1-s)\ell-(1-\hbar s)(a_1,a_2)}
      {\hbar^{1/2}(1-s/\hbar)}.
\]
The terms involving $(a_1,a_2)$ cancel in the combination
\begin{equation}\label{two:component-row-identity}
 A_\hbar(s)\ell K_1+B_\hbar(s)\ell K_2=E\ell.
\end{equation}
Together with the product relation for $K_1,K_2$, this identifies
the component sum with a solution of the full two-particle
hierarchy.  For our finite vertex, the remaining term is explicitly
\begin{equation}\label{two:tRS-boundary-defect}
 (H_1-E)V_x=
 \frac{\chi e_x(1-\mathcal B_p)R(x)}{x(1-s)},\qquad H_2V_x=DV_x.
\end{equation}
The Bethe equation therefore closes the component system and
the tRS eigenvalue problem simultaneously.

\subsection{Cyclic Elimination at Arbitrary Order}
\label{two:cyclic-determinant}

The Bethe equation can also be recovered by eliminating the
intermediate shifts of the vertex.  On a finite K\"ahler orbit,
this amounts to computing a determinant, whose vanishing gives
the Bethe equation at $(z^p,a_1^p,a_2^p,\hbar^p)$.

Let $q$ be a primitive $p$th root of unity, where $p\geq2$, and
write
\[
 Z=z^p,\qquad A_i=a_i^p.
\]
We use the $p$-dimensional cyclic representation of
$p_zz=qzp_z$ with $p_z^p=U$, in which multiplication by $z$ is
diagonal with entries $z,qz,\ldots,q^{p-1}z$.  Here $U$ is the
Bloch multiplier; the framing parameters remain fixed.
The matrix of $\mathcal L$ has a nonzero kernel precisely when
its determinant vanishes.  Thus the following calculation gives
the condition on the multiplier of a cyclic solution.

\begin{proposition}\label{two:native-determinant}
In the cyclic representation above, the scalar vertex operator
has determinant
\begin{equation}\label{two:powered-Bethe}
 \det\mathcal L
 =(U-A_1)(U-A_2)
   -Z\hbar^p(U-\hbar^{-p}A_1)(U-\hbar^{-p}A_2).
\end{equation}
\end{proposition}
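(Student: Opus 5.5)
In the cyclic representation $\mathcal L$ is a $p\times p$ matrix of the form $\mathcal L=\alpha(z)p_z^2+\beta(z)p_z+\gamma(z)$, with
\[
\alpha(z)=1-\hbar z,\qquad \beta(z)=-E(1-z),\qquad \gamma(z)=D(1-z/\hbar).
\]
The plan is to factor it into first-order pieces and use the determinant formula for a first-order operator.

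\emph{First-order determinant.} For $c(z)p_z+d(z)$ in this representation, $p_z$ is a cyclic shift times the scalar $U$ (up to the choice of where $U$ sits in the cycle), and the coefficients are diagonal. Expanding the determinant, only the identity permutation and the full $p$-cycle contribute, so
\[
\det\bigl(c\,p_z+d\bigr)=\prod_{j=0}^{p-1}d(q^jz)-(-1)^{p}\cdot(-1)^{p-1}\,U\prod_{j=0}^{p-1}c(q^jz)=\prod_j d(q^jz)+(-1)^p\,U\prod_j c(q^jz)\cdot(-1)^{p+1}\cdot(-1)^{p},
\]
and I will fix the sign by checking $p=1$. The cleaner route is the Casoratian identity $\det(p_z-\lambda)=U-\lambda^p$ when $\lambda$ is a constant. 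Note the sign of the $p$-cycle is $(-1)^{p-1}$, which is exactly what makes the answer independent of parity.

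\emph{Factorization.} To reduce to first order, write $R(p_z)=(p_z-a_1)(p_z-a_2)$ and $J(p_z)=\hbar^2(p_z-a_1/\hbar)(p_z-a_2/\hbar)$. Then $\mathcal L=R(p_z)-(z/\hbar)J(p_z)$, but since $z$ and $p_z$ do not commute it does not factor naively. Instead I would argue generically.
\begin{itemize}
\item $\det\mathcal L$ is a polynomial in $U$ of degree $2$, since $p_z$ has degree $1$ in $U$ after the $p$-th power structure.
\item It is invariant under $z\mapsto qz$, since conjugation by the cyclic shift permutes the basis. Hence it is a polynomial in $Z$.
\item Its degree in $z$ is at most $p$, so it is affine in $Z$.
\end{itemize}
This gives $\det\mathcal L=P_0(U)-Z\,P_1(U)$ with $P_0,P_1$ quadratic in $U$ with coefficients in $a$ and $\hbar$.

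\emph{Identifying the coefficients.} First set $z=0$. Then $\mathcal L=(p_z-a_1)(p_z-a_2)$ has constant coefficients, so $\det=(U-a_1^p)(U-a_2^p)$ by the Casoratian identity, giving $P_0(U)=(U-A_1)(U-A_2)$.

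For $P_1$, take the coefficient of the top power $z^p$. Rescale by writing $\mathcal L=R(p_z)-(z/\hbar)J(p_z)$ and consider the limit $z\to\infty$ with $z^{-p}\det\mathcal L$. The leading part is the determinant of the operator $-(z/\hbar)J(p_z)$. Since $z$ is diagonal, this determinant factors as $\det(-z/\hbar)\cdot\det J(p_z)$. Here
\[
\det(-z/\hbar)=(-1)^pZ\,\hbar^{-p}\cdot q^{p(p-1)/2},
\]
and $q^{p(p-1)/2}=(-1)^{p-1}$, so this equals $-Z\hbar^{-p}$. Also
\[
\det J(p_z)=\hbar^{2p}\,(U-A_1\hbar^{-p})(U-A_2\hbar^{-p}).
\]
Combining these gives $-Z\hbar^{p}(U-\hbar^{-p}A_1)(U-\hbar^{-p}A_2)$, as claimed.

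\emph{Main obstacle.} The delicate step is justifying that $\det\mathcal L$ is affine in $Z$, with no $z^p\cdot z^{-p}$-type cross terms mixing lower-order pieces. Each matrix entry of $\mathcal L$ is affine in $z$, so the determinant has total $z$-degree at most $p$. Invariance under $z\mapsto qz$ then kills every power $z^k$ with $p\nmid k$, which settles it. I also need to check that $U$ enters only through $p_z^p=U$, so that entries of $p_z^2$ wrapping around the cycle carry a factor $U$ and the degree in $U$ is at most $2$. I will confirm this by the grading that assigns $U$ weight $p$ and each $p_z$ weight $1$.
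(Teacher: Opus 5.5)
Your proof is correct and is essentially the paper's argument: conjugation by $p_z$ gives invariance under $z\mapsto qz$, and the degree bound $\le p$ leaves only the $z^0$ and $z^p$ coefficients. You evaluate these as $\det R(p_z)$ and $\det\bigl(-\hbar^{-1}\diag(1,q,\ldots,q^{p-1})J(p_z)\bigr)$, using $(-1)^pq^{p(p-1)/2}=-1$, exactly as the paper does. One small correction: the identity you call Casoratian is actually $\det(p_z-\lambda)=(-1)^{p+1}(U-\lambda^p)$, not $U-\lambda^p$ (for $p=2$ it gives $\lambda^2-U$); this is harmless because $R$ and $J$ each factor into two linear pieces, so the sign squares away, and your unfinished first-order sign computation is not needed.
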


\begin{proof}
Conjugating by $p_z$ sends $z$ to $qz$ and leaves the determinant
unchanged.  As a polynomial in $z$ it has degree at most $p$;
therefore only the constant term and the coefficient of $z^p$
can occur.  At $z=0$ we find
\[
 \det R(p_z)=(U-A_1)(U-A_2).
\]
The coefficient of $z^p$ is obtained by taking the determinant of
$-\hbar^{-1}\diag(1,q,\ldots,q^{p-1})J(p_z)$.
For primitive roots of odd or even order,
\[
 (-1)^p q^{p(p-1)/2}=-1.
\]
We also have
$\det J(p_z)=(\hbar^pU-A_1)(\hbar^pU-A_2)$,
so the leading term is
$-Z\hbar^{-p}(\hbar^pU-A_1)(\hbar^pU-A_2)$,
as claimed.
\end{proof}

The condition $\det\mathcal L=0$ is consequently the powered
Bethe equation for a nonzero cyclic solution.  We shall now
construct that solution on the regular locus of the phase ratios.

\subsection{The Finite-Sum Cancellation for Arbitrary \texorpdfstring{$p$}{p}}
\label{two:finite-general}

The finite period realizes the determinant condition through
cancellation of adjacent weighted terms.  Its boundary is measured
by $1-\mathcal B_p$, so closing the cycle gives a vertex solution.

Keep $x\ne0$ fixed along a K\"ahler orbit and assume
$x^p\ne a_i^p$.  For each integer $j\geq0$ let
\begin{equation}\label{two:cyclic-coefficients}
 \mu_j(x;z)
 =\prod_{r=0}^{j-1}\phi(q^rx;z)
 =\left(\frac z\hbar\right)^j
    \prod_{i=1}^2
      \frac{(\hbar x/a_i;q)_j}{(qx/a_i;q)_j},
 \qquad \mu_0=1.
\end{equation}
Here $\mu_j$ is the product of the phase ratios along the first
$j$ steps of $x,qx,\ldots,q^{p-1}x$.  In particular, the
transport around the entire cycle is
\begin{equation}\label{two:Bethe-holonomy}
 \mathcal B_p(x;z):=\mu_p(x;z)
   =\prod_{r=0}^{p-1}\phi(q^rx;z).
\end{equation}
A descendant $f$ of the internal variable is inserted into the
weighted sum over this orbit as follows:
\begin{equation}\label{two:finite-descendant-period}
 \langle f\rangle_{x,z}
   :=e_x\sum_{j=0}^{p-1}\mu_j(x;z)f(q^jx),
 \qquad p_ze_x=xe_x,
\end{equation}
where $f$ is regular at the orbit points.  The unit insertion
$1$ gives the finite unit-descendant period
\begin{equation}\label{two:cyclic-period}
 S_x(z)=\sum_{j=0}^{p-1}\mu_j(x;z),\qquad
 \mathcal V_x=e_xS_x=\langle1\rangle_{x,z}.
\end{equation}
The unit period therefore has the same phase weights as every
descendant, with no further factor in the internal variable.
Its cycle closes when $\mathcal B_p=1$; otherwise the sum has
the boundary term calculated below.  The interpolation components
are precisely $\Phi_\beta=\langle w_\beta\rangle_{x,z}$, and the
normalized vertex is $V_x=\chi\mathcal V_x$.
Using $\mu_j(x;qz)=q^j\mu_j(x;z)$, we obtain the moments
\begin{equation}\label{two:shift-moments}
 p_z^m\mathcal V_x
 =e_x\sum_{j=0}^{p-1}\mu_j(x;z)(q^jx)^m,
 \qquad m\geq0.
\end{equation}

These moments allow us to apply the scalar vertex operator to
\eqref{two:cyclic-period}.  The result is the arbitrary-order
version of \eqref{two:finite-two-defect}, relating the boundary
of the finite period to the determinant condition
\eqref{two:powered-Bethe}.

\begin{proposition}\label{two:boundary-defect}
The finite cyclic period satisfies
\begin{equation}\label{two:exact-defect}
 \mathcal L\mathcal V_x
 =e_x(1-\mathcal B_p)(x-a_1)(x-a_2).
\end{equation}
On the stated regular locus, it is therefore a solution of the
vertex equation if and only if $\mathcal B_p=1$.
\end{proposition}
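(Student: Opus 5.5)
The plan is to compute $\mathcal L\mathcal V_x$ directly from the moment formula \eqref{two:shift-moments} and then telescope. Write $\mathcal L=R(p_z)-(z/\hbar)J(p_z)$ with coefficients preceding the shifts, as in \eqref{two:native-operator}. Since $R$ and $J$ are polynomials in $p_z$, linearity of \eqref{two:shift-moments} gives
\[
 R(p_z)\mathcal V_x=e_x\sum_{j=0}^{p-1}\mu_j(x;z)R(q^jx),\qquad
 J(p_z)\mathcal V_x=e_x\sum_{j=0}^{p-1}\mu_j(x;z)J(q^jx).
\]
Multiplication by $z/\hbar$ happens after the shift, so no extra $q$-factors arise. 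We therefore obtain
\[
 \mathcal L\mathcal V_x=e_x\sum_{j=0}^{p-1}\Bigl[\mu_jR(q^jx)-\tfrac z\hbar\mu_jJ(q^jx)\Bigr].
\]

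The key identity comes from the phase ratio \eqref{two:phase-ratio} together with the recursion $\mu_{j+1}=\mu_j\,\phi(q^jx;z)$ from \eqref{two:cyclic-coefficients}. Together they give
\[
 \tfrac z\hbar\mu_jJ(q^jx)=\mu_j\phi(q^jx;z)R(q^{j+1}x)=\mu_{j+1}R(q^{j+1}x).
\]
This is valid on the regular locus, where $R(q^{j+1}x)\neq0$ because $x^p\ne a_i^p$. Setting $g_j=\mu_jR(q^jx)$, the summand becomes $g_j-g_{j+1}$, and the sum telescopes to $g_0-g_p=R(x)-\mu_pR(q^px)$. Finally, $q^p=1$ and $\mu_p=\mathcal B_p$ by \eqref{two:Bethe-holonomy}, which yields $e_x(1-\mathcal B_p)(x-a_1)(x-a_2)$, as claimed.

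For the ``if and only if'', note that $e_x\neq0$ and $R(x)\neq0$ on the regular locus, since $x^p\neq a_i^p$ forces $x\neq a_i$. Hence $\mathcal L\mathcal V_x=0$ exactly when $\mathcal B_p=1$.

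The only delicate point is the first step, namely justifying \eqref{two:shift-moments} from $\mu_j(x;qz)=q^j\mu_j(x;z)$ and $p_ze_x=xe_x$. One must keep track of the fact that $x$ stays fixed along the Kähler orbit, so that $p_z$ acts only on $z$ and on the character, and that the index $j$ is not reduced modulo $p$ inside $\mu_j$. Once the order of coefficients and shifts is respected, the rest is the telescoping above. This is exactly the $q=-1$ computation \eqref{two:finite-two-defect} extended to $p$ terms.
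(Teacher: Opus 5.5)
Your proposal is correct and is essentially the paper's own proof. It uses the moment formula \eqref{two:shift-moments} and the recursion $\mu_{j+1}R(q^{j+1}x)=(z/\hbar)\mu_jJ(q^jx)$, telescopes to $R(x)-\mathcal B_pR(q^px)=(1-\mathcal B_p)R(x)$ using $q^p=1$, and gets the equivalence from $R(x)\neq0$ when $x^p\neq a_i^p$.
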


\begin{proof}
The consecutive weights $\mu_j$ satisfy
\[
 \mu_{j+1}R(q^{j+1}x)=\frac z\hbar \mu_jJ(q^jx).
\]
Substituting \eqref{two:shift-moments} into the operator, we see
that consecutive terms cancel and only the endpoints remain:
\begin{align*}
 e_x^{-1}\mathcal L\mathcal V_x
 &=\sum_{j=0}^{p-1}\mu_jR(q^jx)
    -\sum_{j=0}^{p-1}\mu_{j+1}R(q^{j+1}x)\\
 &=R(x)-\mathcal B_pR(q^px)=(1-\mathcal B_p)R(x).
\end{align*}
Finally, $x^p\ne a_i^p$ ensures $R(x)\ne0$, which gives the
stated equivalence.
\end{proof}

We can perform the calculation directly with the tRS Hamiltonian.
On each summand the two shifts act by
$p_1(\chi e_x\mu_j)=\hbar^{1/2}\,q^jx\,\chi e_x\mu_j$ and
$p_2(\chi e_x\mu_j)=D(\hbar^{1/2}\,q^jx)^{-1}\chi e_x\mu_j$.
The Hamiltonian consequently acts as
\begin{align}\label{two:direct-tRS-action}
 \frac{(H_1-E)V_x}{\chi e_x}
 &=\frac1{1-s}\sum_{j=0}^{p-1}\mu_j
   \left[(1-\hbar s)q^jx+
     \frac{D(1-s/\hbar)}{q^jx}-E(1-s)\right]\notag\\
 &=\frac1{x(1-s)}\sum_{j=0}^{p-1}
   \left[q^{-j}\mu_jR(q^jx)
         -q^{-(j+1)}\mu_{j+1}R(q^{j+1}x)\right]\notag\\
 &=\frac{(1-\mathcal B_p)R(x)}{x(1-s)}.
\end{align}
The second equality follows by expanding $R,J$ and applying
$\mu_{j+1}R(q^{j+1}x)=(z/\hbar)\mu_jJ(q^jx)$ together with
$s=z/q$.  After telescoping, $q^{-p}=1$ makes the last endpoint
equal to $\mathcal B_pR(x)$.  The other Hamiltonian satisfies
$H_2V_x=DV_x$ on each term.  We have thus obtained the tRS
eigenvalue equation from the closed weighted cycle,
$\mathcal B_p=1$.

The cyclotomic product of the factors in
\eqref{two:cyclic-coefficients} gives
\begin{equation}\label{two:closure-product}
 \mathcal B_p=\frac{Z}{\hbar^p}
      \prod_{i=1}^2\frac{1-\hbar^px^p/A_i}{1-x^p/A_i}.
\end{equation}
Here $A_i=a_i^p$, $i=1,2$, are the Frobenius powers of the fixed
equivariant parameters, and $Z=z^p$.  These constants are distinct
from the coefficient function $A_\hbar(s)$ and its orbit values
$A_j$ introduced below.
Upon setting $U=x^p$, the condition $\mathcal B_p=1$ is exactly
the vanishing of the determinant in \eqref{two:powered-Bethe}.
Thus elimination of the intermediate K\"ahler shifts and closure
of the internal cycle give the same equation.

It remains to check that the cyclic section is nonzero.  On a
free orbit with $z\ne0$, its polynomial parts read
\[
 S_x(q^mz)=\sum_{j=0}^{p-1}q^{mj}\mu_j(x;z),
 \qquad 0\leq m<p.
\]
The inverse discrete Fourier transform recovers $\mu_0=1$.
Hence at least one coordinate of the orbit vector is nonzero,
although individual coordinates may vanish.  For every order
$p$ we therefore obtain a nonzero cyclic solution of the ordinary
tRS equations on the regular Bethe locus where their coefficients
are defined.  Its one-step component system is given by
\eqref{two:component-defect}--\eqref{two:homogeneous-component-system}
wherever the matrices are defined and invertible.

\subsection{The Operator Cancellation for Arbitrary \texorpdfstring{$p$}{p}}
\label{two:eigenfunction-Frobenius}

The cancellation in the square of the two-particle Hamiltonian
extends to a universal Newton identity at every root of unity.
We shall derive it from the orbit graph, independently of the
vertex or the Bethe equations.  For our finite vertex the resulting
Frobenius equation follows both from the ordinary eigenvalue
problem and from the full-cycle multipliers together with the
Bethe equation at powered parameters.

Recall that on a regular branch of $\mathcal B_p=1$ the normalized
finite vertex is
\begin{equation}\label{two:normalized-cyclic-vertex}
 V_x=\chi\mathcal V_x
     =\chi e_x\sum_{j=0}^{p-1}\mu_j(x;z)
     =\chi(\Phi_1+\Phi_2)=I_1+I_2.
\end{equation}
Its components obey \eqref{two:homogeneous-component-system}.
By \eqref{two:operator-conversion}, their sum satisfies
\[
 H_1V_x=(a_1+a_2)V_x,
 \qquad H_2V_x=a_1a_2V_x.
\]
The full-cycle K\"ahler shifts act as
\begin{equation}\label{two:Bloch-multipliers}
 p_1^pV_x=\hbar^{p/2}UV_x,
 \qquad
 p_2^pV_x=D^p\hbar^{-p/2}U^{-1}V_x.
\end{equation}
These equations hold for lifted sections, or equivalently on
Bloch fibres.  Generic multipliers do not give single-valued
functions on the finite coordinate orbit.

Introduce the Newton polynomials in two variables by
\begin{equation}\label{two:Newton}
 P_0(X,Y)=2,\quad P_1(X,Y)=X,\quad
 P_m(X,Y)=XP_{m-1}(X,Y)-YP_{m-2}(X,Y).
\end{equation}
Their defining property is
$P_m(b_1+b_2,b_1b_2)=b_1^m+b_2^m$.
At a root of unity the K\"ahler Hamiltonians satisfy the identity
\begin{equation}\label{two:Newton-identity}
 P_p(H_1,H_2)
 =A_{\hbar^p}(s^p)p_1^p+B_{\hbar^p}(s^p)p_2^p.
\end{equation}
We prove the universal operator identity \eqref{two:Newton-identity}
by a calculation on the $q$-orbit graph, independently of the chosen
vertex.  For $p=1$ the assertion is immediate, so first take $p\geq3$.
Label the vertices by $j\in\Z/p\Z$, corresponding to $q^js$,
and write $A_j=A_\hbar(q^js)$ and $B_j=B_\hbar(q^js)$.
The weighted directed edges are
\[
 j\xrightarrow{\ \alpha A_j\ }j+1,\qquad
 j\xrightarrow{\ \beta B_j\ }j-1,
\]
where $\alpha,\beta$ are independent Bloch parameters.  The
orientation of an entry goes from its row vertex to its column
vertex.  Multiplication by $f(s)$ is represented by
$\diag(f(q^js))$, and the shift representation is
\begin{equation}\label{two:orbit-graph}
 \begin{aligned}
 p_1&=\alpha T,\qquad p_2=\beta T^{-1},
       \qquad (Tv)_j=v_{j+1},\\
 \mathsf H(s)&=\alpha\diag(A_j)T+\beta\diag(B_j)T^{-1}.
 \end{aligned}
\end{equation}
One checks the shift relations
$p_1s=qs p_1$ and $p_2s=q^{-1}s p_2$, while
$H_2=\alpha\beta$.

Matrix powers of $\mathsf H$ enumerate directed walks, including
walks which revisit a vertex.  In the determinant
$\det(X-\mathsf H)$, the relevant objects are vertex-disjoint
permutation cycles.  On our nearest-neighbor graph these consist
of fixed vertices, two-cycles on edges, and the two oriented
cycles winding around the graph.

Disjoint two-cycles form a matching.  We index the undirected
edge $\{j,j+1\}$ by $j\in\Z/p\Z$, including the edge
$\{p-1,0\}$.  Its two-cycle contains the steps $j\to j+1$ and
$j+1\to j$.  Dividing their weight by $\alpha\beta$ gives
\begin{equation}\label{two:matching-edge-weight}
 w_j=A_jB_{j+1}
 =\frac{(1-\hbar q^js)(\hbar-q^{j+1}s)}
        {\hbar(1-q^js)(1-q^{j+1}s)}.
\end{equation}
For $0\leq k\leq\lfloor p/2\rfloor$, let
\begin{equation}\label{two:matching-index-set}
 \mathcal M_{p,k}
 =\bigl\{M\subset\Z/p\Z:
          |M|=k,\ M\cap(M+1)=\varnothing\bigr\}.
\end{equation}
Here $M+1=\{j+1:j\in M\}$ and indices are taken modulo $p$.
The element $M$ specifies $k$ edges $\{j,j+1\}$, one for each
$j\in M$.  The condition in the definition excludes a common
vertex between any two of them.  Thus $\mathcal M_{p,k}$
parametrizes the $k$-edge matchings of the cycle, each exactly once.

The permutation with these $k$ two-cycles and $p-2k$ fixed
vertices contributes
$(-\alpha\beta)^kX^{p-2k}\prod_{j\in M}w_j$.
Consequently, its total coefficient is determined by summing
$\prod_{j\in M}w_j$ over $M\in\mathcal M_{p,k}$.
The following lemma identifies this weighted sum with the number
of matchings.  These numbers will become the Newton coefficients
in \eqref{two:Newton-explicit}.

\begin{lemma}\label{two:matching-cancellation}
Let $p\geq3$, and let $q$ be a primitive $p$th root of unity.
For the weights \eqref{two:matching-edge-weight} and matchings
\eqref{two:matching-index-set}, we have
\begin{equation}\label{two:matching-sum}
 S_{p,k}(s):=\sum_{M\in\mathcal M_{p,k}}\prod_{j\in M}w_j
 =\frac{p}{p-k}\binom{p-k}{k},
 \qquad 0\leq k\leq\lfloor p/2\rfloor.
\end{equation}
For $k=0$ the only matching is $M=\varnothing$, with empty
product $1$ as its weight.
\end{lemma}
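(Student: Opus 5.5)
We must show that $S_{p,k}(s)$ is constant in $s$ and equals the number of $k$-matchings of the $p$-cycle. The count itself is classical: the number of $k$-element subsets $M\subset\Z/p\Z$ with no two cyclically adjacent elements is $\frac{p}{p-k}\binom{p-k}{k}$. So the task reduces to proving that $S_{p,k}(s)$ equals its limiting value as $s\to\infty$, where each $w_j$ tends to $1$. Indeed
\[
 w_j=\frac{(1-\hbar q^js)(\hbar-q^{j+1}s)}{\hbar(1-q^js)(1-q^{j+1}s)}\longrightarrow\frac{(-\hbar)(-1)}{\hbar}=1 .
\]
The plan is therefore to show that $S_{p,k}$ is a rational function of $s$ with no poles on $\mathbb P^1$. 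It is then constant by Liouville's theorem, and the constant is the limit at infinity, which is the number of matchings.

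\textbf{Step 1: cyclic invariance.} Relabelling $j\mapsto j+1$ preserves $\mathcal M_{p,k}$ and sends $w_j(s)$ to $w_j(qs)$. Hence $S_{p,k}(qs)=S_{p,k}(s)$. This fact will be used to restrict attention to a single candidate pole.

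\textbf{Step 2: location of the poles.} The only possible poles of $S_{p,k}$ are at $s=q^{-j}$, and each is at most simple. The reason is that in a matching the edges $j-1$ and $j$ are never both present, so the factor $1-q^js$ appears in at most one denominator of each product. By the cyclic symmetry of Step 1, it suffices to check the candidate pole $s=1$, that is, the factor $1-s$ at vertex $0$.

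\textbf{Step 3: vanishing of the residue at $s=1$.} Matchings that do not use vertex $0$ are regular there. Those that use it contain exactly one of the edges $\{p-1,0\}$ (index $j=p-1$) or $\{0,1\}$ (index $j=0$). Matchings of the first type are of the form $M'\cup\{p-1\}$, and those of the second type are of the form $M'\cup\{0\}$. In both cases $M'$ is a $(k-1)$-matching of the path on the vertices $1,\dots,p-2$ in the first case, or $2,\dots,p-1$ in the second.

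This asymmetry is the main obstacle. I would first try to absorb it as follows. Sort by the edge incident to vertex $0$, and note that the neighbouring vertex ($1$ or $p-1$) is then also covered. This leaves a matching $M'$ of the path on $p-2$ vertices, namely $\{1,\dots,p-2\}$ or $\{2,\dots,p-1\}$. Each $w_i$ with $i\in M'$ depends on $q^is,q^{i+1}s$. Evaluated at $s=1$, the two families correspond under the reflection $i\mapsto p-1-i$, which sends $q^i\mapsto q^{-1-i}$, i.e.\ $q\mapsto q^{-1}$ on the evaluation points. Check that $w_i$ at $s=1$ is invariant under this reflection: the numerator and denominator become $(1-\hbar q^{i})(\hbar-q^{i+1})$ versus $(1-\hbar q^{-i-1})(\hbar-q^{-i})$, and these agree up to the factor $q^{-2i-1}$, which also appears in the denominator. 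Hence $w_i(1)=w_{p-2-i}(1)$, and the two $M'$-sums are equal at $s=1$.

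Next compute the residues of $w_{p-1}$ and $w_0$ at $s=1$. We have
\[
 w_0=\frac{(1-\hbar s)(\hbar-qs)}{\hbar(1-s)(1-qs)},\qquad
 \operatorname{Res}_{s=1}w_0\propto\frac{(1-\hbar)(\hbar-q)}{\hbar(1-q)},
\]
\[
 w_{p-1}=\frac{(1-\hbar q^{-1}s)(\hbar-s)}{\hbar(1-q^{-1}s)(1-s)},\qquad
 \operatorname{Res}_{s=1}w_{p-1}\propto\frac{(1-\hbar q^{-1})(\hbar-1)}{\hbar(1-q^{-1})}
 =\frac{(q-\hbar)(\hbar-1)}{\hbar(q-1)} .
\]
These two residues are negatives of each other. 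Consequently the residues of the two families cancel, since their cofactors are equal at $s=1$ by the reflection argument above. The case $k=0$ is trivial.

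Combining Steps 1–3, $S_{p,k}$ is a pole-free rational function, hence constant, and it equals its value at $s=\infty$, which is the matching count $\frac{p}{p-k}\binom{p-k}{k}$.
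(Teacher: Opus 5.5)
Your proof is correct, but it establishes pole-freeness by a different route from the paper.

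The paper clears all denominators at once. The function $N_{p,k}(s)=(1-s^p)S_{p,k}(s)$ is a polynomial of degree at most $p$. Rotation invariance $N_{p,k}(qs)=N_{p,k}(s)$ with $q$ primitive kills every coefficient except those of $s^0$ and $s^p$. These two are read off summand by summand: each matching contributes $1$ to the constant term and $(-1)^{p-2k}q^{p(p-1)/2}=-1$ to the top coefficient. Hence $N_{p,k}=\#\mathcal M_{p,k}\,(1-s^p)$.

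You instead use rotation only to reduce to the candidate pole $s=1$, and then cancel its residue by hand. The residues of $w_0$ and $w_{p-1}$ are opposite, and the reflection $v\mapsto -v$ of the cycle identifies the two cofactor path sums at $s=1$. Both computations are right. One slip: the identity you actually verify, with evaluation points $q^{-i-1},q^{-i}$, is $w_i(1)=w_{p-1-i}(1)$, not the displayed $w_{p-2-i}(1)$.

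The paper's argument is shorter and needs neither residues nor the reflection symmetry; its only arithmetic input is $q^{p(p-1)/2}=(-1)^{p-1}$. In its language, your vanishing residue at $s=1$ is exactly the relation $c_p=-c_0$ for $N_{p,k}=c_0+c_ps^p$, which the paper obtains by direct evaluation. Your version makes the cancellation mechanism visible locally. It also parallels the collision-residue pairing the paper itself uses in Lemma~\ref{flag:bipartition}.

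Your evaluation at $s=\infty$ plays the role of the paper's coefficient count; $s=0$ works equally well, since every $w_j$ also equals $1$ there. You should still justify the number of matchings rather than calling it classical. For instance, it equals $\binom{p-k}{k}+\binom{p-k-1}{k-1}$, according to whether the edge $\{p-1,0\}$ is used.
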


\begin{proof}
Since a matching uses a vertex at most once, the denominator of
its weight contains each factor $1-q^js$ at most once.  Hence
\begin{equation}\label{two:matching-polynomial}
 \begin{aligned}
 N_{p,k}(s)&:=(1-s^p)S_{p,k}(s)\\
 &=\sum_{M\in\mathcal M_{p,k}}
   \prod_{j\in M}
     \frac{(1-\hbar q^js)(\hbar-q^{j+1}s)}{\hbar}
   \prod_{v\notin M\cup(M+1)}(1-q^vs)
 \end{aligned}
\end{equation}
is a polynomial of degree at most $p$.  Rotation of the cycle
permutes its matchings, giving $N_{p,k}(qs)=N_{p,k}(s)$.
Since $q$ is primitive, only the constant term and the degree-$p$
coefficient can survive.

The constant coefficient of each summand in
\eqref{two:matching-polynomial} is one, and its degree-$p$
coefficient equals
\[
 (-1)^{p-2k}q^{0+1+\cdots+(p-1)}=-1,
\]
by $q^{p(p-1)/2}=(-1)^{p-1}$.  Thus $N_{p,k}(s)$ equals the
number of matchings multiplied by $1-s^p$.  Their number is
\[
 \binom{p-k}{k}+\binom{p-k-1}{k-1}
 =\frac p{p-k}\binom{p-k}{k}.
\]
The first term counts matchings without the edge $\{p-1,0\}$,
and the second counts those containing it; the latter is zero
for $k=0$.  This gives \eqref{two:matching-sum}.
When $p$ is even and $k=p/2$, the sum contains both alternating
perfect matchings, with total weight $2$.
\end{proof}

We can now evaluate the determinant of \eqref{two:orbit-graph}
using Lemma~\ref{two:matching-cancellation}.  A fixed vertex
contributes $X$, a two-cycle contributes $-\alpha\beta w_j$,
and a full winding cycle contributes the negative of its edge
product.  For the last assertion, the permutation sign
$(-1)^{p-1}$ combines with the $p$ off-diagonal factors to give
an additional $(-1)^p$, so the overall sign is negative at every
order.  We obtain
\begin{align}\label{two:cycle-cover-determinant}
 \det(X-\mathsf H(s))
 &=\sum_{k=0}^{\lfloor p/2\rfloor}
       (-1)^k S_{p,k}(s)(\alpha\beta)^kX^{p-2k}\notag\\
 &\hspace{1cm}-\alpha^p\prod_{j=0}^{p-1}A_j
                -\beta^p\prod_{j=0}^{p-1}B_j.
\end{align}
For even $p$, the perfect matchings occur in the first sum;
they are separate from the two winding cycles.
The matching numbers in \eqref{two:matching-sum} are the
coefficients of the Newton polynomial
\begin{equation}\label{two:Newton-explicit}
 P_p(X,Y)=\sum_{k=0}^{\lfloor p/2\rfloor}
   (-1)^k\frac p{p-k}\binom{p-k}{k}Y^kX^{p-2k}.
\end{equation}
The same expression follows from the recurrence
\eqref{two:Newton}.  We therefore identify the matching part of
the determinant with the Newton corrections.  Cayley--Hamilton
will translate this determinant calculation into cancellation of
mixed walks in the operator polynomial.  No decomposition of an
individual walk into disjoint dimers is involved.

The weights of the two winding cycles are the cyclotomic products
\begin{align}\label{two:winding-products}
 \prod_{j=0}^{p-1}A_j
 &=\frac{1-\hbar^ps^p}{\hbar^{p/2}(1-s^p)}
   =A_{\hbar^p}(s^p),\notag\\
 \prod_{j=0}^{p-1}B_j
 &=\frac{\hbar^p-s^p}{\hbar^{p/2}(1-s^p)}
   =B_{\hbar^p}(s^p).
\end{align}
Combining \eqref{two:matching-sum}--\eqref{two:winding-products}
gives the characteristic polynomial
\begin{equation}\label{two:characteristic-polynomial}
 \det(X-\mathsf H(s))
 =P_p(X,\alpha\beta)
   -A_{\hbar^p}(s^p)\alpha^p
   -B_{\hbar^p}(s^p)\beta^p.
\end{equation}

For $p=2$ both directed steps connect the same two vertices.
Their weights therefore add in the matrix entries:
\[
 \mathsf H(s)=
 \begin{pmatrix}
  0&\alpha A_0+\beta B_0\\
  \alpha A_1+\beta B_1&0
 \end{pmatrix}.
\]
The characteristic polynomial in this case reads
\[
 X^2-\alpha^2A_0A_1-\beta^2B_0B_1
       -\alpha\beta(A_0B_1+B_0A_1).
\]
Here \eqref{two:mixed-two} makes the mixed coefficient $2$,
while \eqref{two:winding-products} evaluates the pure products.
This proves \eqref{two:characteristic-polynomial} for $p=2$ as
well, with $P_2(X,Y)=X^2-2Y$.

It remains to recover the operator relation from this polynomial.
For fixed primitive root $q$, the matrix $\mathsf H(s)$ has size
$p\times p$ and entries in the commutative field
$\C(s,\alpha,\beta,\hbar^{1/2})$.  Specializing the parameters
away from its poles gives an ordinary complex matrix, whose
characteristic polynomial is
\[
 \chi_{\mathsf H}(X)=\det(X\,\id-\mathsf H(s))
   =X^p+c_1X^{p-1}+\cdots+c_p.
\]
The coefficients $c_j$ are
scalars in the coefficient field.  By Cayley--Hamilton, the matrix
satisfies its own polynomial, namely
\[
 \chi_{\mathsf H}(\mathsf H(s))
 =\mathsf H(s)^p+c_1\mathsf H(s)^{p-1}
       +\cdots+c_p\,\id=0.
\]
In this substitution, each power of $X$ in the expanded scalar
polynomial becomes a matrix power, while the constant coefficient
multiplies $\id$.  Thus the substitution takes place after
expanding the determinant, rather than within its entries.
One can see the relation directly by comparing coefficients of
$X$ in the adjugate identity
\[
 (X\,\id-\mathsf H(s))
       \operatorname{adj}(X\,\id-\mathsf H(s))
   =\chi_{\mathsf H}(X)\,\id.
\]
Beginning with the highest power, the coefficient equations
express the adjugate coefficients successively as polynomials in
$\mathsf H(s)$.  The constant-coefficient equation is then
exactly the Cayley--Hamilton relation above.  Using the explicit
polynomial \eqref{two:characteristic-polynomial}, it becomes
\begin{equation}\label{two:Newton-matrix-identity}
 P_p(\mathsf H(s),\alpha\beta\,\id)
 =\bigl[A_{\hbar^p}(s^p)\alpha^p
       +B_{\hbar^p}(s^p)\beta^p\bigr]\id.
\end{equation}
The matrix representing $H_2$ is $\alpha\beta\,\id$, so this
is \eqref{two:Newton-identity} in the representation
\eqref{two:orbit-graph}.  The independence of $\alpha,\beta$
is important: it keeps the full-cycle shifts
$p_1^p=\alpha^p$ and $p_2^p=\beta^p$ as formal multipliers,
instead of specializing them to one.  Indeed, the image of a
normal-ordered term $f(s)p_1^ap_2^b$ is
\[
 \alpha^a\beta^b\diag(f(q^js))T^{a-b}.
\]
Different pairs $(a,b)$ give different Laurent monomials in
$\alpha,\beta$.  The representation is therefore faithful on
finite Laurent sums of shifts with rational coefficients in $s$,
and the matrix relation proves the formal operator identity.
The matching terms give its Newton corrections, while the two
winding cycles give the Frobenius terms.

We can now apply \eqref{two:Newton-identity} to $V_x$ and obtain
\begin{equation}\label{two:Frobenius-on-wave}
 \bigl[A_{\hbar^p}(s^p)p_1^p
       +B_{\hbar^p}(s^p)p_2^p\bigr]V_x
 =(a_1^p+a_2^p)V_x.
\end{equation}
The full-cycle multipliers evaluate the two shifts explicitly:
\begin{equation}\label{two:powered-direct-action}
 \bigl[A_{\hbar^p}(Z)p_1^p+B_{\hbar^p}(Z)p_2^p\bigr]V_x
 =\frac{(1-\hbar^pZ)U+D^p(1-Z/\hbar^p)/U}{1-Z}\,V_x.
\end{equation}
Here $s^p=z^p=Z$, with $Z=1$ excluded from the regular locus.
The coefficient is $a_1^p+a_2^p$ by the powered Bethe equation.
To check this directly, subtract $(a_1^p+a_2^p)V_x$ from
\eqref{two:powered-direct-action}.  The difference is
\begin{equation}\label{two:powered-Bethe-residual}
 \frac{(U-A_1)(U-A_2)
       -Z\hbar^p(U-\hbar^{-p}A_1)(U-\hbar^{-p}A_2)}
      {U(1-Z)}\,V_x,
\end{equation}
and vanishes by \eqref{two:powered-Bethe} on $\mathcal B_p=1$.
For the second Hamiltonian, \eqref{two:Bloch-multipliers} gives
$p_1^pp_2^pV_x=D^pV_x$.  We have thus verified the full
two-particle eigenvalue problem precomposed with the Frobenius action
from the cycle multipliers and the Bethe equation alone.  The ordinary
equations $H_rV_x=e_r(a_1,a_2)V_x$ and the matrices $K_i$ are
not needed for this last calculation.

The ordinary eigenvalue equations give the same Frobenius
equations, as in the general statement of
Corollary~\ref{flag:eigenfunction-frobenius}.  The calculation
from cycle multipliers and powered Bethe roots extends to complete
flags in Theorem~\ref{cyc:finite-frobenius}.  In the two-particle
case we have, in addition, established nonvanishing of the finite
vertex and constructed its first-order component system
\eqref{two:homogeneous-component-system}.

\section{Complete Flags: Cycles and Newton--Adams Cancellation}
\label{sec:flag}

Let us now consider the $n$-body tRS model associated with complete
flags, generalizing the calculation of Section~\ref{sec:two}.
The shifts $p_i$ act on homogeneous K\"ahler
coordinates by $\zeta_i\mapsto q\zeta_i$.  When $q$ is a primitive
$p$th root of unity, their coordinate orbits are finite, although a
section on the lifted orbit may acquire a Bloch multiplier after a
full cycle.  In higher rank, the matching model is replaced by
sequences of subsets: each of the $p$ steps shifts the coordinates
in a chosen subset.  We shall show that the Newton--Adams combination
of their weights cancels every endpoint at which a coordinate has
been shifted only part of the way around its orbit.

The proof uses centrality in the Hecke algebra and the bounds on shift
exponents.  For each surviving full-cycle shift there is only one
contributing subset sequence, which determines its coefficient.
This gives the operator identity in every rank and at every root
order, and hence its eigenfunction consequence in
Corollary~\ref{flag:eigenfunction-frobenius}.  At order two we shall
also give a direct proof by pairing the collision residues of a
signed bipartition sum.

For the finite vertex sums we also need the internal grid of auxiliary
roots $x_{r,s}$, $1\leq s\leq r<n$, with $R=n(n-1)/2$ variables.
Starting from nonzero values $x_{r,s}^0$, replace each independently
by $q^{m_{r,s}}x_{r,s}^0$, where $m_{r,s}\in\Z/p\Z$.
The resulting grid has $p^R$ points; the K\"ahler parameters,
equivariant parameters, and $\hbar$ are held fixed.  Each edge carries
the phase ratio for the corresponding auxiliary shift.  Flatness
identifies the two products around an elementary square, while the
product around a full coordinate loop is its Bethe norm.  When these
norms equal one, the weights are periodic and total differences
vanish under summation.

The external operators act on the K\"ahler parameters of these sums,
leaving the internal roots fixed.  The tRS/Bethe correspondence
identifies their full-cycle multipliers with classical tRS momenta
at the powered parameters.  Combining this fact with the operator
identity gives Theorem~\ref{cyc:finite-frobenius}.  Thus Bethe closure
provides finite sums to which the operator identity applies, whereas
the identity itself and its consequence for joint eigenfunctions
hold independently of closure.

The section is organized as follows.
Subsections~\ref{flag:normalization}--\ref{flag:histories} introduce
the shift algebra, Adams polynomials, and subset sequences.
Hecke centrality in Subsection~\ref{flag:centrality-subsection}
then gives the operator identity and its eigenfunction consequence
in Subsection~\ref{flag:operator-identity}.
Subsection~\ref{flag:examples} supplies the direct order-two proof,
and Subsection~\ref{flag:Bloch} identifies the classical spectral
fiber through Bloch multipliers.  Starting with
Subsection~\ref{cyc:construction}, we turn to the internal flag phase
and its K\"ahler transport, impose the Bethe closure conditions,
and construct the finite sums satisfying the Frobenius equations
of Theorem~\ref{cyc:finite-frobenius}.  Finally,
Subsection~\ref{cyc:explicit-components} expresses the unit-insertion
sum through explicit descendant components.

\subsection{Parameters and the Formal Shift Algebra}
\label{flag:normalization}

We begin with the normalization of the Hamiltonians.  Let
$q=\omega_p$ be a primitive root of unity of order $p\geq1$;
the order need not be prime.  The tRS positions are the homogeneous
K\"ahler coordinates $\zeta_1,\ldots,\zeta_n$, while the equivariant
parameters $a_1,\ldots,a_n$ are fixed spectral data.  We keep the
cotangent parameter $\hbar\ne0$ fixed as well.  The auxiliary roots
$x_{r,s}$, $1\leq s\leq r<n$, enter the flag integrals but not the
operator identity.

The geometric K\"ahler parameters and their shifts are
\begin{equation}\label{flag:kahler-coordinates}
 z_r=q\,\frac{\zeta_r}{\zeta_{r+1}},\qquad
 p_i\zeta_j=q^{\delta_{ij}}\zeta_jp_i,\qquad
 p_i z_r=q^{\delta_{ir}-\delta_{i,r+1}}z_rp_i.
\end{equation}
Notice that $p_1\cdots p_n$ fixes every ratio $z_r$.  To retain the
central spectral direction in these variables, an eigenfunction must
therefore include a homogeneous character.

Treating $\hbar^{1/2}$ and the $\zeta_i$ initially as indeterminates,
define
\begin{equation}\label{flag:shift-algebra}
 \mathbb K=\C(\hbar^{1/2},\zeta_1,\ldots,\zeta_n),\qquad
 \mathscr D_q=\mathbb K\rtimes\Z^n,
 \qquad [p_i,p_j]=0.
\end{equation}
The cross relations are given by \eqref{flag:kahler-coordinates}.
The fixed $a_i$ may be adjoined to the constant field.  Every element
has a unique finite normal-ordered expansion
$\sum_{\boldsymbol m}c_{\boldsymbol m}(\zeta)p^{\boldsymbol m}$, with
the full shift exponents retained.  In particular, $p_i^p$ is not
identified with $1$.  Put
\begin{equation}\label{flag:central-coordinates}
 Z_i=\zeta_i^p,\qquad \Pi_i=p_i^p.
\end{equation}
Then the center is
\[
 Z(\mathscr D_q)
 =\C(\hbar^{1/2},Z_1,\ldots,Z_n)
       [\Pi_1^{\pm1},\ldots,\Pi_n^{\pm1}].
\]
Indeed, commutation with the $\zeta_i$ forces the shift exponents to
be multiples of $p$, and commutation with the $p_i$ makes their
coefficients invariant under elementary shifts.  Keeping the shifts
formal allows us to record Bloch multipliers; the action on ordinary
rational functions would instead impose $p_i^p=1$.

For $I\subseteq\{1,\ldots,n\}$ write $p_I=\prod_{i\in I}p_i$.
The centered Hamiltonians are
\begin{equation}\label{flag:Hr}
 H_r=\sum_{|I|=r}b_I(\zeta;\hbar)p_I,\qquad
 b_I(\zeta;\hbar)=\hbar^{-r(n-r)/2}
  \prod_{\substack{i\in I\\j\notin I}}
       \frac{\hbar\zeta_i-\zeta_j}{\zeta_i-\zeta_j},
 \qquad 0\leq r\leq n.
\end{equation}
We have $H_0=1$, $H_n=p_1\cdots p_n$, and the $H_r$ commute.
Since their compositions may have poles along $\zeta_i=q^d\zeta_j$,
we first work with rational identities in $\mathscr D_q$.
They specialize on a full orbit whenever $Z_i\ne Z_j$ for $i\ne j$.

We use commuting Cherednik operators $Y_1,\ldots,Y_n$ normalized by
\begin{equation}\label{flag:centered-Y}
 \operatorname{Rad}(e_r(Y_1,\ldots,Y_n))=H_r,
 \qquad 0\leq r\leq n.
\end{equation}
Here $\operatorname{Rad}$ denotes the radial part, defined below.
This convention fixes the common scalar in the $Y_i$ and hence the
spectral normalization of the Adams operation.

For generic $q$, the normalized complete-flag vertices satisfy
\eqref{intro:vertex-equations} by \cite{K21} (Theorem~2.10,
equations~(2.10)--(2.12)).  To compare with the modified vertex
$\widetilde V$ of \cite{D21}, set $u_i=a_i^{-1}$ and
$z_r^{\rm vert}=z_r/\hbar$, so that
$\zeta_r/\zeta_{r+1}=(\hbar/q)z_r^{\rm vert}$.
Its equations read
$\hbar^{-r(n-1)/2}H_r\widetilde V=e_r(a)\widetilde V$.
Thus we choose a character on the lifted shift lattice with
\begin{equation}\label{flag:homogeneous-character}
 p_i\chi=\hbar^{-(n-1)/2}\chi,\qquad V=\chi\widetilde V.
\end{equation}
Then $H_rV=e_r(a)V$.  In particular,
$p_1\cdots p_nV=(a_1\cdots a_n)V$, so the homogeneous character
must be retained when we pass to ratio coordinates.  The corresponding
component-sum construction for equivariant shifts is the qKZ/tRS
correspondence of \cite{KZ21}, reviewed in the introduction.
These generic-$q$ statements concern normalized vertices; at a root
of unity a vertex needs its own definition or regularization.
For the remainder of the operator calculation we return to $q=\omega_p$.

\subsection{The Adams Operation and its Shift Support}
\label{flag:Adams}

We shall express symmetric functions of the powered spectral variables
through the original commuting Hamiltonians.  For this purpose define
the integral Newton--Adams polynomials by
\begin{equation}\label{flag:Adams-definition}
 \mathcal A_{p,r}(e_1(y),\ldots,e_n(y))
       =e_r(y_1^p,\ldots,y_n^p),\qquad 0\leq r\leq n.
\end{equation}
The fundamental theorem of symmetric polynomials determines
$\mathcal A_{p,r}\in\Z[h_1,\ldots,h_n]$ uniquely.  In particular,
$\mathcal A_{p,0}=1$ and $\mathcal A_{p,n}=h_n^p$.
For two variables, $\mathcal A_{p,1}$ is the Newton polynomial
$P_p$ defined in Section~\ref{sec:two}.  For example,
\[
 P_2(u,v)=u^2-2v,\qquad
 P_3(u,v)=u^3-3uv,\qquad
 P_4(u,v)=u^4-4u^2v+2v^2.
\]
These polynomials involve no division by $p$.  Consider the generating
function
\begin{equation}\label{flag:H-generating}
 \mathcal H(u)=\sum_{j=0}^n(-u)^jH_j.
\end{equation}
The elementary identity
\begin{equation}\label{flag:cyclotomic-symmetric}
 \prod_{d=0}^{p-1}\left(\sum_{j=0}^n(-q^du)^je_j(y)\right)
 =\prod_{i=1}^n(1-u^py_i^p)
 =\sum_{r=0}^n(-1)^ru^{pr}e_r(y_1^p,\ldots,y_n^p)
\end{equation}
therefore gives
\begin{equation}\label{flag:cyclotomic-Adams}
 \prod_{d=0}^{p-1}\mathcal H(q^du)
 =\sum_{r=0}^n(-1)^ru^{pr}\mathcal A_{p,r}(H_1,\ldots,H_n).
\end{equation}
Here we have used the commutativity of the $H_j$.  The degrees of
these polynomials give the following bounds on the shift exponents.

\begin{lemma}\label{flag:Adams-support}
The polynomial $\mathcal A_{p,r}$ has weighted degree $pr$ for
$\deg h_j=j$, ordinary degree at most $p$, and coefficient $1$
at $h_r^p$ for $r\geq1$.  Consequently every shift in
$\mathcal A_{p,r}(H)$ satisfies
\begin{equation}\label{flag:box-support}
 0\leq m_i\leq p,\qquad \sum_{i=1}^n m_i=pr.
\end{equation}
\end{lemma}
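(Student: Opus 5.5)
We argue in two stages: first the polynomial statements about $\mathcal A_{p,r}\in\Z[h_1,\ldots,h_n]$, then the shift support of $\mathcal A_{p,r}(H)$.

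\emph{Weighted degree.} Give $y_i$ degree one. Then $e_j(y)$ has degree $j$, and $e_r(y_1^p,\ldots,y_n^p)$ is homogeneous of degree $pr$. The algebraic independence of $e_1,\ldots,e_n$ makes the representation in the $h_j$ unique. Hence every monomial $\prod_j h_j^{k_j}$ occurring in $\mathcal A_{p,r}$ satisfies $\sum_j jk_j=pr$.

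\emph{Ordinary degree.} Read off \eqref{flag:cyclotomic-Adams} with the $h_j$ as commuting indeterminates and $h_0=1$. The left side $\prod_{d=0}^{p-1}\bigl(\sum_j(-q^du)^jh_j\bigr)$ is a product of $p$ factors, each of degree at most one in the $h_j$. Every monomial in $h_1,\ldots,h_n$ appearing in its expansion therefore has ordinary degree at most $p$. Extracting the coefficient of $(-1)^ru^{pr}$ shows that $\mathcal A_{p,r}$ has ordinary degree at most $p$.

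\emph{Leading coefficient.} Among monomials of weighted degree $pr$ and ordinary degree at most $p$, we locate $h_r^p$. Choosing $h_r$ from every factor contributes $\prod_d(-q^du)^r=(-1)^{pr}q^{rp(p-1)/2}u^{pr}$. The sign identity $(-1)^pq^{p(p-1)/2}=-1$ from the proof of Proposition~\ref{two:native-determinant} gives $(-1)^{pr}q^{rp(p-1)/2}=(-1)^r$. Matching against $(-1)^ru^{pr}$, the coefficient is $1$. Equivalently, set $y_1=\cdots=y_r=t$ and the remaining $y_i=0$, or compare leading terms in the lexicographic order.

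\emph{Shift support.} Substitute $H_j$ into $\mathcal A_{p,r}$. A monomial $\prod_j H_j^{k_j}$ is a product of $\sum_j k_j\leq p$ factors. Each factor is a sum of terms $b_Ip_I$ with $|I|=j$. Normal-order each product, moving all coefficients to the left using \eqref{flag:kahler-coordinates}; this does not change the shift exponents. The resulting shifts are $p^{\boldsymbol m}$ with $m_i=\#\{\text{factors whose subset contains } i\}$.

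It follows that $0\leq m_i\leq\sum_j k_j\leq p$. Also $\sum_i m_i=\sum_j jk_j=pr$ by the weighted degree. Coefficients in $\Z$ combine these terms linearly, so every shift in $\mathcal A_{p,r}(H)$ lies in the box \eqref{flag:box-support}.

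The only delicate point is the bound on the ordinary degree; reading it from the cyclotomic product, as above, avoids any Newton-identity bookkeeping. The rest is degree counting.
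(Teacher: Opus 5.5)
Your proof is correct and follows the paper's route. You read both degree bounds and the coefficient $(-1)^{pr-r}q^{rp(p-1)/2}=1$ of $h_r^p$ off the $p$ linear factors of the cyclotomic product, then count how often each position is shifted in a product of at most $p$ summands, with normal ordering changing only coefficients. The differences are minor: you get the weighted degree from homogeneity in $y$ rather than from the $u$-grading, and your side remark about specializing $y_1=\cdots=y_r=t$ is valid once the ordinary-degree bound is known, since then only $h_r^p$ survives, so it confirms the coefficient without the root-of-unity sign identity.
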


\begin{proof}
We extract the coefficient of $u^{pr}$ in
\eqref{flag:cyclotomic-symmetric}.  Each of its $p$ factors is linear
in the $h_j$, with $h_0=1$, which gives both degree bounds.
The coefficient of $h_r^p$ in $\mathcal A_{p,r}$ is
\[
 (-1)^{pr-r}q^{rp(p-1)/2}=1,
\]
since $q^{p(p-1)/2}=(-1)^{p-1}$, including when the order is even.
Now each summand of $H_j$ shifts $j$ distinct positions once.
A product of at most $p$ summands therefore shifts any given position
at most $p$ times, and its total shift degree equals the weighted
degree.  Normal ordering affects only the coefficients, so these
bounds also hold in the normal-ordered expansion.
\end{proof}

\subsection{Subset Sequences on the External Orbit}
\label{flag:histories}

The cyclotomic product has a useful combinatorial interpretation.
Each term in its expansion selects a subset at each step, and we can
group the resulting sequences according to their endpoints.  Let
$\mathcal I=(I_0,\ldots,I_{p-1})$ be a sequence of subsets of
$\{1,\ldots,n\}$, with intermediate positions and endpoint given by
\begin{equation}\label{flag:history-endpoints}
 k_d=\sum_{e<d}\boldsymbol1_{I_e},\qquad
 m(\mathcal I)=k_p,\qquad
 \zeta\longmapsto q^{k_d}\zeta.
\end{equation}
Here $m_i$ counts how many times coordinate $i$ is shifted.
To such a sequence we assign a signed weight, and then sum over
sequences with a prescribed endpoint:
\begin{equation}\label{flag:history-weights}
 \begin{aligned}
 w(\mathcal I;\zeta)
   &=(-1)^{|m(\mathcal I)|}
     q^{\sum_{d=0}^{p-1}d|I_d|}
     \prod_{d=0}^{p-1}b_{I_d}(q^{k_d}\zeta;\hbar),\\
 C_m(\zeta)&=\sum_{\mathcal I:\,m(\mathcal I)=m}
                      w(\mathcal I;\zeta),
 \qquad |m|=\sum_i m_i.
 \end{aligned}
\end{equation}
Here normal ordering means moving all rational functions of the
positions to the left and all shifts to the right.  The rule is
\[
 p^k f(\zeta)=f(q^k\zeta)p^k,
 \qquad q^k\zeta=(q^{k_1}\zeta_1,\ldots,q^{k_n}\zeta_n).
\]
After the first $d$ factors, the accumulated shift is $p^{k_d}$.
Moving it past the next coefficient gives
$p^{k_d}b_{I_d}(\zeta;\hbar)
=b_{I_d}(q^{k_d}\zeta;\hbar)p^{k_d}$, which explains the argument
of each factor in \eqref{flag:history-weights}.  We therefore obtain
\begin{equation}\label{flag:history-expansion}
 \prod_{d=0}^{p-1}\mathcal H(q^du)
   =\sum_{m\in\{0,\ldots,p\}^n}u^{|m|}C_m(\zeta)p^m.
\end{equation}
Equation~\eqref{flag:cyclotomic-Adams} implies $C_m=0$ when
$p\nmid|m|$.  To obtain the operator identity we need the stronger
vanishing $C_m=0$ whenever some $0<m_i<p$.
We shall deduce it from Hecke centrality, and then give a direct
calculation at $p=2$.  The sequences here describe external shifts;
internal roots and Bethe equations do not enter their weights.

\subsection{Root-of-Unity Centrality}
\label{flag:centrality-subsection}

Let us recall the part of the type-$\GL(n)$ double affine Hecke algebra
which is needed for this calculation; see
\cite{VV10} (Sections~3.1--3.4).  There are two separately commuting
families of invertible elements.  The coordinates $X_1,\ldots,X_n$
act by multiplication by $\zeta_1,\ldots,\zeta_n$, and their duals
$Y_1,\ldots,Y_n$ are the commuting Cherednik operators.
The latter are constructed from translations in the
affine Hecke algebra and act by rational difference--reflection
operators, combining multiplicative shifts with permutations.
We use the centered $Y_i$ of \eqref{flag:centered-Y}.

The Hecke generators $T_i$ deform the simple reflections $s_i$.
The finite reflections exchange adjacent coordinates, while the
affine reflection also contains a $q$-shift.  Besides the braid
relations, these generators satisfy $(T_i-v)(T_i+v^{-1})=0$, where
$v^2=\hbar$.  Their commutation with coordinate functions is given
by the Bernstein relation, in a compatible simple-root convention,
\begin{equation}\label{flag:bernstein-relation}
 T_i f(X)-(s_i f)(X)T_i
   =(v-v^{-1})\frac{f(X)-(s_i f)(X)}{1-X^{\alpha_i}},
\end{equation}
where $\alpha_i$ is the corresponding simple affine root.
Its monomial $X^{\alpha_i}$ includes the affine parameter when
appropriate.  Thus a function invariant under affine reflections
commutes with these generators.  The length-zero elements of the
extended affine Weyl group act by coordinate substitution.
Fourier duality exchanges the $X$ and $Y$ families, up to the common
normalization scalar and the conventional inversions and parameter
substitutions.  It carries central symmetric functions of one family
to those of the other.

In the localized Dunkl realization these identities hold in the
formal algebra $\mathscr D_q\rtimes S_n$.  Writing every permutation
to the right, we define the radial part by
\begin{equation}\label{flag:radial-part}
 L=\sum_{w\in S_n}L_w w
 \quad\longmapsto\quad
 \operatorname{Rad}(L)=\sum_{w\in S_n}L_w,
 \qquad L_w\in\mathscr D_q.
\end{equation}
The permutations are removed as in the action on symmetric functions,
but every lattice shift is retained as a formal symbol.  On symmetric
functions of the commuting $Y_i$, radial restriction is multiplicative
and satisfies $\operatorname{Rad}(e_r(Y))=H_r$.
We only use multiplicativity on this subalgebra, not on arbitrary
difference--reflection operators.  A symmetric function $g(\zeta)$
commutes with every permutation, giving
\[
 \operatorname{Rad}([L,g])=[\operatorname{Rad}(L),g].
\]
Thus a central element has a radial part which commutes with
symmetric coordinate functions.

Here $q$ is the actual multiplicative shift of the coordinates.
The convention of \cite{VV10} is $X_\delta=q_{\rm VV}^{-2}$, so
its parameter is to be expressed in terms of this shift.  All our
calculations take place in the formal realization, where $p_i^p$
remains a distinct symbol; faithfulness on ordinary Laurent
polynomials at a root of unity is not required.
The affine invariance of symmetric functions of $X_1^p,\ldots,X_n^p$
now gives the commutation relation needed for the Adams operators.

\begin{lemma}\label{flag:centrality}
The operator $\mathcal A_{p,r}(H)$ commutes with multiplication by
every symmetric rational function of $\zeta_1,\ldots,\zeta_n$.
\end{lemma}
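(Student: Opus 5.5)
The plan is to lift the statement to the Dunkl realization, where centrality is a statement about the affine Hecke algebra, and then descend via the radial part. By \eqref{flag:centered-Y} and the multiplicativity of $\operatorname{Rad}$ on the symmetric subalgebra of the $Y_i$, we have
\[
 \mathcal A_{p,r}(H_1,\ldots,H_n)
 =\operatorname{Rad}\bigl(\mathcal A_{p,r}(e_1(Y),\ldots,e_n(Y))\bigr)
 =\operatorname{Rad}\bigl(e_r(Y_1^p,\ldots,Y_n^p)\bigr),
\]
where the last equality is just the defining property \eqref{flag:Adams-definition} applied inside the commutative algebra generated by the $Y_i$. By the remark preceding the lemma, $\operatorname{Rad}([L,g])=[\operatorname{Rad}(L),g]$ for symmetric $g(\zeta)$. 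It therefore suffices to show that $L=e_r(Y^p)$ commutes with every symmetric rational $g(\zeta)$ in $\mathscr D_q\rtimes S_n$.

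The key step is to obtain this centrality by Fourier duality. On the coordinate side, consider symmetric functions of $X_1^p,\ldots,X_n^p$.

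\begin{itemize}
\item They are $S_n$-invariant, so the Bernstein relation \eqref{flag:bernstein-relation} shows they commute with the finite $T_i$.
\item For the affine reflection, $s_0$ sends $X_1^p\mapsto (q^{\pm1}X_n)^p=X_n^p$ (up to the convention for the affine root), because $q^p=1$. Hence these functions are also invariant under the affine reflection, and by \eqref{flag:bernstein-relation} they commute with $T_0$.
\item Length-zero elements act by coordinate substitution involving a single $q$-shift and a cyclic permutation. These too fix symmetric functions of the $X_i^p$.
\item Finally, they commute with the $X_j$ trivially.
\end{itemize}

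So symmetric functions of $X^p$ are central in the DAHA at $q^p=1$. Fourier duality carries $e_r(X^p)$ to $e_r(Y^p)$, up to the normalization scalar, which is harmless since it is a $p$th power of a constant. Hence $e_r(Y^p)$ is central too, and in particular commutes with all $X_i$. It therefore commutes with Laurent polynomials in $\zeta$, and, after inverting, with rational functions. Symmetric ones are all we need, since only these pass through $\operatorname{Rad}$.

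I expect the main obstacle to be making sure the duality statement is legitimate in the formal setting. The duality is an automorphism of the abstract DAHA, while we need centrality inside the localized Dunkl realization, where $p_i^p$ is kept formal. I would handle this by noting that the Dunkl representation is a homomorphism from the DAHA into $\mathscr D_q\rtimes S_n$ (localized). Central elements map to elements commuting with the images of the generators, in particular with the $X_i$, so no faithfulness at the root of unity is needed. A secondary check is the parameter dictionary: the affine root $X^{\alpha_0}$ must involve exactly the actual shift $q$ (with $X_\delta=q_{\rm VV}^{-2}$ translated accordingly), so that $q^p=1$ really makes $X_i^p$ affine-invariant. If the duality route proved awkward, a fallback would be to verify directly that $Y_i^p$ symmetric combinations commute with $X_j$ using the explicit form of $Y_i$ as products of $T$'s and a cyclic shift. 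That computation would reduce to the same affine invariance.
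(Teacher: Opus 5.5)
Your proposal is correct and follows the paper's route essentially step for step. Symmetric functions of $X_1^p,\ldots,X_n^p$ are central by the Bernstein relation together with affine and length-zero invariance at $q^p=1$; Fourier duality transfers this to symmetric functions of $Y_1^p,\ldots,Y_n^p$, with scalars and inversions harmless; and one descends to $\mathcal A_{p,r}(H)=\operatorname{Rad}(e_r(Y_1^p,\ldots,Y_n^p))$ using multiplicativity of $\operatorname{Rad}$ and $\operatorname{Rad}([L,g])=[\operatorname{Rad}(L),g]$.

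The paper adds only a closing specialization step. It first runs the argument at generic Hecke parameter. It then notes that the commutator, written through the explicit $H_r$, has coefficients Laurent polynomial in $\hbar^{1/2}$ with position-only denominators, so the identity holds for every nonzero $\hbar^{1/2}$.
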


\begin{proof}
First assume that the Hecke parameter is generic and consider a
symmetric Laurent polynomial $f$ in $X_1^p,\ldots,X_n^p$.
It is invariant under the extended affine Weyl action.  Indeed,
permutations preserve its symmetry, while translations multiply
the $X_i$ by powers of $q$, whose $p$th powers are one.
The right side of \eqref{flag:bernstein-relation} vanishes, so $f$
commutes with the affine Hecke generators.  The conjugation relation
for length-zero elements gives commutation with those elements as
well.  Since $f$ also commutes with the coordinate generators, it
is central.

Applying Fourier duality shows that every symmetric Laurent polynomial
in $Y_1^p,\ldots,Y_n^p$ is central.  The common normalization scalar,
inversions, and parameter involution do not change this conclusion.
Using the defining identity of the Adams polynomial and
multiplicativity of radial restriction on symmetric
functions of $Y$, we obtain
\[
 \begin{aligned}
 \operatorname{Rad}\bigl(e_r(Y_1^p,\ldots,Y_n^p)\bigr)
   &=\operatorname{Rad}\bigl(\mathcal A_{p,r}(e_1(Y),\ldots,e_n(Y))\bigr)\\
   &=\mathcal A_{p,r}(H).
 \end{aligned}
\]
The required commutation follows by applying
\eqref{flag:radial-part} to the commutator with a symmetric
coordinate function.

The argument has taken place in the formal difference--reflection
algebra, so distinct lattice shifts remain distinct upon specialization.
To specialize the Hecke parameter, write the commutator using
\eqref{flag:Hr}.  Its coefficients are Laurent polynomial in
$\hbar^{1/2}$, with denominators depending only on positions and
their $q$-translates.  The generic identity therefore extends to
every nonzero $\hbar^{1/2}$ wherever these coefficients are defined.
\end{proof}

\begin{remark}\label{flag:order-remark}
The restrictions on root orders in some geometric results of
\cite{VV10} are not needed here.  Our proof uses the Bernstein
relations, affine invariance, and Fourier duality, and applies to
every $p$, including composite and even orders.  The explicit
rational formulas for the $H_r$ remove any denominators introduced
by an auxiliary Hecke symmetrizer.
\end{remark}

\subsection{Cancellation of All Intermediate Shifts}
\label{flag:operator-identity}

We can now combine the commutation relation with the bounds of
Lemma~\ref{flag:Adams-support}.  Introduce the operators
\begin{equation}\label{flag:powered-Hr}
 H_r^{[p]}=\hbar^{-pr(n-r)/2}\sum_{|I|=r}
  \prod_{\substack{i\in I\\j\notin I}}
       \frac{\hbar^pZ_i-Z_j}{Z_i-Z_j}\,\Pi_I,
 \qquad \Pi_I=\prod_{i\in I}\Pi_i.
\end{equation}
These belong to $Z(\mathscr D_q)$.  The following theorem identifies
them with the Adams polynomials in the original Hamiltonians.

\begin{theorem}\label{flag:frobenius}
For every $n,p\geq1$ and every primitive $p$th root $q$,
\begin{equation}\label{flag:Adams-operator}
 \mathcal A_{p,r}(H_1,\ldots,H_n)=H_r^{[p]},
 \qquad 0\leq r\leq n.
\end{equation}
Equivalently,
\begin{equation}\label{flag:Frobenius-factorization}
 \prod_{d=0}^{p-1}\mathcal H(q^du)
   =\sum_{r=0}^n(-1)^ru^{pr}H_r^{[p]}.
\end{equation}
The surviving coefficients are obtained by the Frobenius action
on coordinates, coupling, and shifts:
\[
 (\zeta_i,\hbar^{1/2},p_i)\longmapsto
 (\zeta_i^p,\hbar^{p/2},p_i^p),
\]
with no extra parity sign.
\end{theorem}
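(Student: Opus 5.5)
We will prove the equivalent generating-function form \eqref{flag:Frobenius-factorization}. By \eqref{flag:cyclotomic-Adams} and the expansion \eqref{flag:history-expansion}, this amounts to two claims about the coefficients $C_m$. First, $C_m=0$ whenever some coordinate satisfies $0<m_i<p$. Second, for $m=p\boldsymbol 1_I$ with $|I|=r$, the coefficient $(-1)^{pr}C_m$ equals the coefficient of $\Pi_I$ in $(-1)^r H_r^{[p]}$. By Lemma~\ref{flag:Adams-support}, every shift occurring in $\mathcal A_{p,r}(H)$ satisfies $0\le m_i\le p$ and $\sum m_i=pr$. So once the first claim is known, only the endpoints $m=p\boldsymbol1_I$ with $|I|=r$ survive.

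\emph{Step 1: vanishing of partial shifts.} We use Lemma~\ref{flag:centrality}. Write $\mathcal A_{p,r}(H)=\sum_m c_m(\zeta)p^m$ in normal-ordered form, and let $g$ be a symmetric rational function of $\zeta$. Then
\[
 [\mathcal A_{p,r}(H),g]=\sum_m c_m(\zeta)\bigl(g(q^m\zeta)-g(\zeta)\bigr)p^m .
\]
Since normal-ordered expansions are unique, this commutator vanishes only if, for each $m$ with $c_m\ne0$, we have $g(q^m\zeta)=g(\zeta)$ for every symmetric $g$. Take $g=e_k(\zeta)$ for $k=1,\dots,n$. The equalities then say that the multiset $\{q^{m_i}\zeta_i\}$ equals $\{\zeta_i\}$ as elements of the field $\mathbb K$ with the $\zeta_i$ indeterminate. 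Distinct indeterminates cannot be matched up to roots of unity unless the matching permutation is the identity. Hence $q^{m_i}=1$ for every $i$, so $p\mid m_i$. Combined with $0\le m_i\le p$, this gives $m_i\in\{0,p\}$.

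\emph{Step 2: the surviving coefficient.} Fix $I$ with $|I|=r$ and $m=p\boldsymbol1_I$. Each $I_d$ can advance each coordinate at most once, and the coordinates in $I$ must advance exactly $p$ times in $p$ steps while those outside $I$ never advance. So the only sequence $\mathcal I$ with endpoint $m$ is $I_d=I$ for all $d$. Then $k_d=d\boldsymbol 1_I$, and \eqref{flag:history-weights} gives
\[
 C_m=(-1)^{pr}q^{r p(p-1)/2}\prod_{d=0}^{p-1}b_I(q^{d\boldsymbol1_I}\zeta;\hbar).
\]
The prefactor $(-1)^{pr}q^{rp(p-1)/2}$ is matched against $(-1)^r$ as in the proof of Lemma~\ref{flag:Adams-support}. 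The identity $q^{p(p-1)/2}=(-1)^{p-1}$ gives $(-1)^{pr}(-1)^{(p-1)r}=(-1)^r$, which is the required sign, and no parity sign is left over.

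Next we evaluate the product factor by factor. For $i\in I$ and $j\notin I$, the factor is $(\hbar q^d\zeta_i-\zeta_j)/(q^d\zeta_i-\zeta_j)$. The cyclotomic identity $\prod_d(X-q^dY)=X^p-Y^p$, applied to numerator and denominator, turns it into
\[
 \frac{\hbar^pZ_i-Z_j}{Z_i-Z_j}.
\]
The normalization $\hbar^{-r(n-r)/2}$ becomes $\hbar^{-pr(n-r)/2}$, and $p_I^p=\Pi_I$. This is exactly the coefficient of $\Pi_I$ in $H_r^{[p]}$ from \eqref{flag:powered-Hr}. For $r=0$ and $r=n$ the identity is immediate.

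\emph{Expected obstacle.} The delicate point is Step 1, specifically its input Lemma~\ref{flag:centrality}, which we are allowed to assume. In the argument above, care is needed in two places. One is to pass from commutation with symmetric functions to the constraint $q^{m_i}=1$; this works by treating the $\zeta_i$ as independent indeterminates in $\mathbb K$ and by using uniqueness of the normal-ordered expansion in $\mathscr D_q$. The other is to ensure that the specialization of $\hbar^{1/2}$ does not destroy this uniqueness, which holds because the formal shifts $p^m$ stay distinct throughout. With both points settled, the counting in Step 2 is purely combinatorial.
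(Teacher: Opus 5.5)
Your proposal is correct and follows essentially the paper's proof: centrality (Lemma~\ref{flag:centrality}) together with the support bound of Lemma~\ref{flag:Adams-support} kills all partial shifts, and the surviving coefficient comes from the unique constant sequence $I_0=\cdots=I_{p-1}=I$ and the cyclotomic product, exactly as recorded in Corollary~\ref{flag:history-cancellation}; the paper uses only $g=e_1(\zeta)$, which gives $\sum_i(q^{m_i}-1)\zeta_i=0$ directly and avoids your multiset step. One slip to fix: your second claim should say that $C_m$ itself, not $(-1)^{pr}C_m$, equals the coefficient of $\Pi_I$ in $(-1)^rH_r^{[p]}$, and this is what your Step~2 actually verifies.
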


\begin{proof}
For $r=0$ the identity is immediate.  Let $r\geq1$ and write
$F_r=\mathcal A_{p,r}(H)=\sum_{\boldsymbol m}
c_{\boldsymbol m}(\zeta)p^{\boldsymbol m}$.
By Lemma~\ref{flag:centrality}, we have
\[
 0=[F_r,e_1(\zeta)]
  =\sum_{\boldsymbol m}c_{\boldsymbol m}(\zeta)
    \left(\sum_i(q^{m_i}-1)\zeta_i\right)p^{\boldsymbol m}.
\]
The normal-ordered expansion is unique and the $\zeta_i$ are
algebraically independent.  Hence $c_{\boldsymbol m}\ne0$ requires
$p\mid m_i$ for every $i$.  Together with
Lemma~\ref{flag:Adams-support}, this leaves only the shifts
\begin{equation}\label{flag:surviving-shifts}
 p^{\boldsymbol m}=p_I^p,\qquad |I|=r.
\end{equation}

It remains to compute the coefficient for a fixed $I$.
Every coordinate in $I$ must be shifted $p$ times, so a contributing
monomial in the $H_j$ has exactly $p$ nonconstant factors.
Each chosen summand must shift every member of $I$ and no coordinate
outside $I$.  Consequently each factor is $H_r$, and the only Adams
monomial which contributes is $H_r^p$, with coefficient one.
Its coefficient at $p_I^p$ is the product around the full orbit.
Applying $\prod_{d=0}^{p-1}(1-q^dw)=1-w^p$ to each pair factor,
we obtain
\begin{equation}\label{flag:orbit-norm}
 \prod_{d=0}^{p-1}b_I(q^{d\boldsymbol1_I}\zeta;\hbar)
 =\hbar^{-pr(n-r)/2}
   \prod_{\substack{i\in I\\j\notin I}}
       \frac{\hbar^p\zeta_i^p-\zeta_j^p}{\zeta_i^p-\zeta_j^p}.
\end{equation}
The leading cyclotomic factors cancel between numerator and
denominator, also at even orders.  This proves
\eqref{flag:Adams-operator}, and the generating form follows from
\eqref{flag:cyclotomic-Adams}.  As in the centrality lemma, clearing
the position denominators extends the result from generic Hecke
parameter to every nonzero $\hbar^{1/2}$.
\end{proof}

\begin{corollary}\label{flag:eigenfunction-frobenius}
Let $n,p\geq1$, let $q$ be a primitive $p$th root of unity, and let
$V$ be a nonzero joint solution on a shift-stable locus where the
operators are defined; it suffices to require
$\zeta_i^p\ne\zeta_j^p$ for $i\ne j$.
If $V$ satisfies, for fixed spectral parameters $a_i$,
\[
 H_rV=e_r(a_1,\ldots,a_n)V,\qquad 1\leq r\leq n,
\]
then
\begin{equation}\label{flag:eigenfunction-powered-equations}
 H_r^{[p]}V
 =\mathcal A_{p,r}(H_1,\ldots,H_n)V
 =e_r(a_1^p,\ldots,a_n^p)V,
 \qquad 1\leq r\leq n.
\end{equation}
\end{corollary}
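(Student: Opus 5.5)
The corollary follows by combining Theorem~\ref{flag:frobenius} with the commutativity of the $H_r$. The operator identity is applied to $V$, and the fact that $V$ is a joint eigenvector of the commuting family is used to evaluate the polynomial $\mathcal A_{p,r}(H_1,\ldots,H_n)$ on $V$.

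The first step is to justify applying the formal identity \eqref{flag:Adams-operator} to $V$. The theorem holds in $\mathscr D_q$, where coefficients are rational in $\zeta$ and shifts are formal. On a shift-stable locus the action of $\mathscr D_q$ on sections (lifted sections, so that $p_i^p$ acts as the Bloch shift) is a homomorphism, provided every coefficient that appears is regular there. This is where the hypothesis enters. In a normal-ordered product of Hamiltonians, the coefficients have poles only along $q^{k_i}\zeta_i=q^{k_j}\zeta_j$, as in \eqref{flag:history-weights}. All of these are avoided when $\zeta_i^p\ne\zeta_j^p$, and this condition is invariant under all shifts. Hence both sides of \eqref{flag:Adams-operator} act on $V$ and agree, giving the first equality in \eqref{flag:eigenfunction-powered-equations}. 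I expect this to be the only delicate point. The identity in $\mathscr D_q$ involves intermediate terms $C_m$ that cancel only formally, so one must make sure the action never evaluates an individual singular term. To handle this, I would argue term by term: each monomial $H_{j_1}\cdots H_{j_k}$ is applied to $V$ as a composition of the individual operators $H_j$, each of which is regular on the locus.

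The second step evaluates the polynomial. Write $\mathcal A_{p,r}=\sum_\nu c_\nu h^\nu$ with $c_\nu\in\Z$; this expansion is unambiguous because the $H_j$ commute. For each monomial, repeated use of $H_jV=e_j(a)V$ gives $H_{j_1}\cdots H_{j_k}V=e_{j_1}(a)\cdots e_{j_k}(a)V$, by induction on $k$. Since the eigenvalues are constants, they commute with the shifts. Summing over monomials gives $\mathcal A_{p,r}(H)V=\mathcal A_{p,r}(e_1(a),\ldots,e_n(a))V$.

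By the defining property \eqref{flag:Adams-definition}, this scalar equals $e_r(a_1^p,\ldots,a_n^p)$, which completes the chain in \eqref{flag:eigenfunction-powered-equations}. For $r=n$ one can check directly that $H_n^{[p]}=\Pi_1\cdots\Pi_n=H_n^p$ acts by $(a_1\cdots a_n)^p$, which is consistent with the general argument.
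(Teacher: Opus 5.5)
Your proof is correct and follows the paper's argument: you apply Theorem~\ref{flag:frobenius} to $V$, then evaluate $\mathcal A_{p,r}(H)$ on the joint eigenvector using commutativity and the constancy of the eigenvalues, and the defining property of $\mathcal A_{p,r}$ gives $e_r(a_1^p,\ldots,a_n^p)$. You also check that every coefficient $b_I(q^{k}\zeta)$ is regular when $\zeta_i^p\ne\zeta_j^p$, so that the formal identity in $\mathscr D_q$ may be applied on the orbit; the paper leaves this step implicit, and your check is sound.
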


\begin{proof}
Apply the Adams polynomial to the common eigenfunction.  Since the
Hamiltonians commute, we get the same polynomial in their eigenvalues:
\[
 \mathcal A_{p,r}(H)V
 =\mathcal A_{p,r}(e_1(a),\ldots,e_n(a))V
 =e_r(a_1^p,\ldots,a_n^p)V.
\]
The left side is $H_r^{[p]}V$ by Theorem~\ref{flag:frobenius},
which proves the assertion.
\end{proof}

The corollary applies to any joint solution defined at the chosen root
of unity, without imposing Bethe equations or fixing Bloch multipliers.
Specializing an infinite vertex series from generic $q$ remains a
separate question.

\begin{corollary}
\label{flag:history-cancellation}
The sums \eqref{flag:history-weights} satisfy
\begin{equation}\label{flag:history-identity}
 C_m(\zeta)=
 \begin{cases}
 (-1)^{|I|}\hbar^{-p|I|(n-|I|)/2}
 \displaystyle\prod_{i\in I,\,j\notin I}
       \dfrac{\hbar^p\zeta_i^p-\zeta_j^p}{\zeta_i^p-\zeta_j^p},
       &m=p\boldsymbol1_I,\\[6pt]
 0,&\text{otherwise}.
 \end{cases}
\end{equation}
For $m=p\boldsymbol1_I$, the sole subset sequence is
$I_0=\cdots=I_{p-1}=I$.
\end{corollary}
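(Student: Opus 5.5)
The plan is to read off the corollary by comparing coefficients in the two expansions of the cyclotomic product that are already available. By \eqref{flag:history-expansion},
\[
 \prod_{d=0}^{p-1}\mathcal H(q^du)=\sum_{m}u^{|m|}C_m(\zeta)p^m .
\]
By \eqref{flag:Frobenius-factorization} of Theorem~\ref{flag:frobenius}, the same product equals $\sum_r(-1)^ru^{pr}H_r^{[p]}$.

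Both sides lie in $\mathscr D_q[u]$, where normal-ordered expansions are unique and the exponents of $p_i$ are kept formal; in particular $\Pi_I=p_I^p$ is the monomial $p^{p\boldsymbol1_I}$. We may therefore equate coefficients of $u^{|m|}p^m$. By \eqref{flag:powered-Hr}, the right side contains only the monomials $p^{p\boldsymbol1_I}$, and each comes with the power $u^{p|I|}$. Hence $C_m=0$ unless $m=p\boldsymbol1_I$. For $m=p\boldsymbol1_I$, the coefficient is $(-1)^{|I|}$ times the coefficient of $\Pi_I$ in $H_{|I|}^{[p]}$, which is exactly \eqref{flag:history-identity} once $\zeta_i^p$ is written for $Z_i$.

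For the uniqueness statement I will argue directly from \eqref{flag:history-endpoints}. If $m(\mathcal I)=p\boldsymbol1_I$, then every $i\in I$ is shifted in all $p$ steps and every $j\notin I$ in none. This forces $I_d=I$ for each $d$.

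As a consistency check, I will evaluate the weight of this sequence directly and confirm that it agrees with the formula. Here $k_d=d\boldsymbol1_I$ and $|m|=p|I|$, so the sign is $(-1)^{p|I|}$ and the $q$-factor is $q^{|I|p(p-1)/2}$. Using $q^{p(p-1)/2}=(-1)^{p-1}$, as in Lemma~\ref{flag:Adams-support}, these combine to $(-1)^{|I|}$. The remaining product of the $b_I$ is \eqref{flag:orbit-norm}.

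No step is a genuine obstacle. The only care needed is the sign bookkeeping at even orders, and the identity $q^{p(p-1)/2}=(-1)^{p-1}$ already settles it.
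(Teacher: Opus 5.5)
Your proposal is correct and matches the paper's proof essentially step for step. You compare coefficients of $u^{|m|}p^m$ in \eqref{flag:history-expansion} and \eqref{flag:Frobenius-factorization}, note that the endpoint $p\boldsymbol1_I$ forces $I_0=\cdots=I_{p-1}=I$, and check that this sequence has weight equal to the orbit norm \eqref{flag:orbit-norm} times $(-1)^{p|I|}q^{|I|p(p-1)/2}=(-1)^{|I|}$, with all the substantive content resting on Theorem~\ref{flag:frobenius} exactly as in the paper.
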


\begin{proof}
Comparing \eqref{flag:history-expansion} with
\eqref{flag:Frobenius-factorization} gives the stated coefficients.
To reach $p\boldsymbol1_I$, each coordinate in $I$ must occur at all
$p$ steps, while every other coordinate is omitted.  The weight of
this unique sequence is the orbit norm \eqref{flag:orbit-norm}
multiplied by
$(-1)^{p|I|}q^{|I|p(p-1)/2}=(-1)^{|I|}$.
\end{proof}

It is useful to compare this calculation with cyclic rotation of
the subset sequences.  Let
$m=m(\mathcal I)$, $R\mathcal I=(I_1,\ldots,I_{p-1},I_0)$, and
$s=\boldsymbol1_{I_0}$.
The weights give
\begin{equation}\label{flag:history-rotation}
 w(R\mathcal I;q^s\zeta)b_{I_0}(\zeta)
 =q^{-|m|}w(\mathcal I;\zeta)b_{I_0}(q^m\zeta).
\end{equation}
Thus rotation changes both the starting point and the argument of the
last coefficient.  At $|m|=pr$ the root-of-unity factor is already
one, so cyclic rotation alone does not account for cancellation of
individual partial shifts.  The general proof above uses centrality.
At order two we can instead prove cancellation directly, as follows.

\subsection{A Direct Cancellation at Order Two}
\label{flag:examples}

Consider $q=-1$.  We shall obtain the cancellation in every rank
from a signed bipartition identity, proved by pairing collision
residues.  First write the coefficient of a subset as a product
of pair weights:
\begin{equation}\label{flag:pair-weight}
 f_\hbar(v,w)=\hbar^{-1/2}\frac{\hbar v-w}{v-w},
 \qquad
 b_I(\zeta;\hbar)=\prod_{i\in I,\,j\notin I}
                         f_\hbar(\zeta_i,\zeta_j).
\end{equation}

\begin{lemma}
\label{flag:bipartition}
For $N\geq1$ and an indeterminate $c$, the rational function
\begin{equation}\label{flag:bipartition-sum}
 S_N(v;c)=\sum_{L\subseteq\{1,\ldots,N\}}(-1)^{|L|}
  \prod_{i\in L,\,j\notin L}
       \left(1+c\frac{v_iv_j}{v_i^2-v_j^2}\right)
\end{equation}
vanishes identically.  Set $S_0=1$.
\end{lemma}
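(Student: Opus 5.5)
We argue by induction on $N$. The base case is immediate: $S_1=1-1=0$. For $N=2$ the two singleton terms contribute $-(1+cv_1v_2/(v_1^2-v_2^2))-(1+cv_1v_2/(v_2^2-v_1^2))=-2$, and the terms $L=\varnothing$ and $L=\{1,2\}$ contribute $+2$, so $S_2=0$. For $N\ge3$ we assume $S_{N-1}=S_{N-2}=0$ and regard $S_N$ as a rational function of the single variable $u=v_N$, with the other $v_i$ generic. Write $g(i,j)=1+c\,v_iv_j/(v_i^2-v_j^2)$ for the factor attached to an ordered pair $i\in L$, $j\notin L$.

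First we show that $S_N$ tends to $0$ as $u\to\infty$. Every factor $g(N,j)$ and $g(i,N)$ tends to $1$, because it has degree zero in $u$. The sets $L\ni N$ and $L\not\ni N$ then give the same product over pairs in $\{1,\ldots,N-1\}$, but with opposite signs. So the limit is $(1-1)S_{N-1}=0$, and in particular $S_N$ is bounded at infinity. Every factor is regular at $u=0$. The only possible poles of $S_N$ in $u$ are therefore simple poles at $u=\pm v_k$, $k<N$, and a pole at $u=\pm v_k$ can only come from a set $L$ that separates $N$ from $k$. Once we show all these residues vanish, $S_N$ is a polynomial in $u$ bounded at infinity, hence constant, and the constant is its limit $0$.

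At $u=v_k$ we pair each $L$ with $N\in L$, $k\notin L$ with $L'=(L\setminus\{N\})\cup\{k\}$. The two sets have the same cardinality, hence the same sign. The singular factors $g(N,k)$ and $g(k,N)$ have residues $+cv_k/2$ and $-cv_k/2$. Once we set $v_N=v_k$, the remaining factors of $L$ and of $L'$ coincide, since the roles of $N$ and $k$ are simply exchanged. Hence the residues cancel in pairs.

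The pole at $u=-v_k$ is the delicate step, because the pairing above fails: substituting $v_N=-v_k$ flips the sign of the terms $c\,v_Nv_j$. What we would use instead is an identity that holds at $v_N=-v_k$. For $N\in L$, $k\notin L$, one has $g(N,j)=g(j,k)$ for $j\notin L$, while the factors $g(i,k)$ with $i\in L$ are already present. The product of all factors involving $N$ or $k$ other than the singular one therefore equals $\prod_{j\ne N,k}g(j,k)$, which is independent of how $L$ splits the remaining indices. The residue contributed by all sets with $N\in L$, $k\notin L$ is thus a constant multiple of
\[
\prod_{j\ne N,k}g(j,k)\sum_{M\subseteq\{1,\ldots,N\}\setminus\{N,k\}}(-1)^{|M|+1}\prod_{i\in M,\,j\notin M}g(i,j),
\]
which is $-\prod_j g(j,k)\,S_{N-2}=0$ by induction. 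In the same way, the sets with $k\in L$, $N\notin L$ give $\prod_j g(k,j)\,S_{N-2}=0$. All residues therefore vanish, which proves $S_N=0$. The main thing to check carefully is this factorization at $u=-v_k$, that is, that no factor depending on the split $M$ survives beyond $S_{N-2}$. This is also why the induction has to reach back two steps, and why $N=2$ is checked separately.
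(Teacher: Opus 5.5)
Your proposal is correct and follows the paper's proof essentially step for step. It has the same induction on $N$, the same simple poles at $v=\pm v_k$ coming only from sets that separate the two indices, and the same exchange pairing at $+v_k$. At $-v_k$ you use the same collapse of the crossing factors to a split-independent product, so the residue reduces to a multiple of $S_{N-2}$, and you close with the same vanishing limit at infinity.

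The differences are cosmetic. You single out $v_N$ rather than $v_1$. You also show that each of the two separating classes at $-v_k$ vanishes on its own, and you check $N=2$ by hand. The paper instead combines the two classes into the factor $(P_+-P_-)S_{N-2}$, which also covers $N=2$, since then $P_+=P_-=1$.
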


\begin{proof}
We argue by induction on $N$, with $S_1=0$ as the initial case.
Regard $S_N$ as a rational function of $v_1$, keeping the other
variables generic.  Each pair contributes at most one crossing
factor in a summand, so the possible finite poles are simple and
lie at $v_1=\pm v_j$.  At $v_1=v_2$, pair terms in which $1$ and
$2$ belong to opposite sides by exchanging their memberships.
Their signs and all other factors agree on this divisor, whereas
the residues of the crossing factors have opposite signs.

For the divisor $v_1=-v_2$, put
\[
 a_k=-c\frac{v_2v_k}{v_2^2-v_k^2},\qquad
 P_\pm=\prod_{k=3}^N(1\pm a_k).
\]
When $1\in L$ and $2\notin L$, their crossing factors with the
remaining vertices have product $P_+$ on this divisor, independent
of the memberships of those vertices.  The opposite assignment
gives $P_-$.  Since the two singular pair factors have opposite
residues, summing over the remaining memberships gives a residue
proportional to
\[
 (P_+-P_-)S_{N-2}(v_3,\ldots,v_N;c).
\]
For $N=2$ the first factor vanishes, and for $N>2$ the induction
hypothesis applies.  The same calculation works for every $j$,
so there are no finite poles.  Finally, as $v_1\to\infty$, every
factor involving vertex $1$ tends to one.  The two choices of its
membership then cancel.  Thus $S_N$ has no finite poles and tends
to zero at infinity, proving that it vanishes identically.
\end{proof}

\begin{proposition}
\label{flag:direct-order-two}
For every $n$, the identity \eqref{flag:Frobenius-factorization} at
$q=-1$ follows from Lemma~\ref{flag:bipartition}, independently of
Hecke centrality.
\end{proposition}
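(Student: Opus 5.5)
At $q=-1$ the factorization \eqref{flag:Frobenius-factorization} reads $\mathcal H(u)\mathcal H(-u)=\sum_r(-1)^ru^{2r}H_r^{[2]}$. Expanding the product in normal order, as in \eqref{flag:history-expansion}, the coefficient of $p^m$ is $C_m$ with subset sequences of length two, $\mathcal I=(I_0,I_1)$. I would group these by their endpoint $m=\boldsymbol1_{I_0}+\boldsymbol1_{I_1}$. Put $K=I_0\cap I_1$ (coordinates shifted twice) and $L\sqcup L'=I_0\triangle I_1$ (coordinates shifted once), with $L=I_0\setminus I_1$. A fixed endpoint with $m_i\in\{0,1,2\}$ determines $K$ and the set $M=I_0\triangle I_1$ of once-shifted coordinates. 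The sequences with that endpoint are then indexed by the subsets $L\subseteq M$.

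The weight \eqref{flag:history-weights} is
\[
 (-1)^{|m|}(-1)^{|I_1|}\,b_{I_0}(\zeta)\,b_{I_1}(q^{\boldsymbol1_{I_0}}\zeta).
\]
Since $|I_1|=|K|+|M|-|L|$, the sign is $(-1)^{|L|}$ times a factor independent of $L$. Writing both coefficients with the pair weights \eqref{flag:pair-weight}, I sort the factors $f_\hbar(\cdot,\cdot)$ by the types of their two coordinates. The types are $K$, $L$, $M\setminus L$, and the untouched set $O$.

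Pairs within $K\times O$ give $f_\hbar(\zeta_k,\zeta_o)f_\hbar(-\zeta_k,\zeta_o)$, which is independent of $L$. Pairs $(k,\ell)$ with $k\in K$ and $\ell\in M$ give
\[
 f_\hbar(\zeta_k,\zeta_\ell)\,f_\hbar(-\zeta_k,\pm\zeta_\ell)
 \quad\text{or}\quad
 f_\hbar(\zeta_k,\zeta_\ell)\,f_\hbar(-\zeta_k,-\zeta_\ell)\ \text{(first factor at step $0$)}.
\]
I need to check that these are symmetric under $L\leftrightarrow M\setminus L$. This should follow from homogeneity, $f_\hbar(-v,-w)=f_\hbar(v,w)$, together with the form $f_\hbar(v,w)f_\hbar(-v,w)=\hbar^{-1}\frac{\hbar^2v^2-w^2}{v^2-w^2}$, which depends only on the squares. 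The same holds for pairs $M\times O$.

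The genuinely $L$-dependent part comes from pairs $\ell\in L$, $\ell'\in M\setminus L$. The contribution is $f_\hbar(\zeta_\ell,\zeta_{\ell'})$ at step $0$ and $f_\hbar(\zeta_{\ell'},-\zeta_\ell)$ at step $1$, after $\ell$ has been shifted. Their product is
\[
 \hbar^{-1}\frac{(\hbar v-w)(\hbar w+v)}{(v-w)(v+w)}
 =1+c\,\frac{vw}{v^2-w^2},\qquad c=\hbar^{-1}(\hbar-1)^2\ \text{(up to sign)},
\]
with $v=\zeta_\ell$, $w=\zeta_{\ell'}$. Before relying on this I will verify the constant term, possibly after rescaling. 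Summing over $L\subseteq M$ then gives, up to an $L$-independent prefactor, exactly $S_{|M|}(\zeta_M;c)$ of Lemma~\ref{flag:bipartition}. Hence $C_m=0$ whenever $M\ne\varnothing$, that is, whenever some $m_i=1$.

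For $M=\varnothing$ the sequence is unique, $I_0=I_1=K$. Its coefficient is computed by \eqref{flag:orbit-norm} at $p=2$ with sign $(-1)^{|K|}$, as in Corollary~\ref{flag:history-cancellation}, and this yields \eqref{flag:Frobenius-factorization}.

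The main obstacle is the bookkeeping of the mixed $K$–$M$ and $M$–$O$ pair factors. One must show they are genuinely independent of how $M$ splits into $L$ and $M\setminus L$. In particular, the evaluation at the point shifted by $\boldsymbol1_{I_0}$ must be tracked carefully, and the reduction of the $L\times(M\setminus L)$ factor to the form $1+c\,vw/(v^2-w^2)$ must come out with the correct orientation. If the $K$–$M$ factors turn out not to be symmetric individually, I would instead absorb them into a rescaling of the variables before invoking the lemma.
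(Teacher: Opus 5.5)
Your proposal is correct and follows the paper's proof essentially step for step. You group the length-two sequences by endpoint, index them by $L\subseteq M$, pull out the $L$-independent pair factors, and recognize the remaining sum as $S_{|M|}(\zeta_M;c)$, with the case $M=\varnothing$ given by the two-step orbit norm. Two small corrections, neither of which affects the argument: each pair $(k,\ell)\in K\times M$ contributes a single crossing factor, which equals $f_\hbar(\zeta_k,\zeta_\ell)$ in both cases by $f_\hbar(-v,-w)=f_\hbar(v,w)$, so the rescaling fallback is unnecessary; and the cross factor is $1+(\hbar-\hbar^{-1})\,vw/(v^2-w^2)$ rather than your value of $c$, which is harmless since the lemma holds for indeterminate $c$.
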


\begin{proof}
For an endpoint $m\in\{0,1,2\}^n$, partition the coordinates into
$D=\{i:m_i=2\}$, $E=\{i:m_i=1\}$, and $F=\{i:m_i=0\}$.
The subset sequences with this endpoint are precisely
$I_0=D\cup L$, $I_1=D\cup(E\setminus L)$ with $L\subseteq E$.
The pair factors involving $D$ or $F$ do not depend on $L$ and
give the common product
\begin{equation}\label{flag:bipartition-common-factor}
 \begin{aligned}
 K_{D,E,F}={}&
 \prod_{d\in D,\,f\in F}
    f_\hbar(\zeta_d,\zeta_f)f_\hbar(-\zeta_d,\zeta_f)\\
 &\mathrel{}\cdot\prod_{d\in D,\,e\in E}f_\hbar(\zeta_d,\zeta_e)
 \prod_{e\in E,\,f\in F}f_\hbar(\zeta_e,\zeta_f).
 \end{aligned}
\end{equation}
For $i\in L$, $j\in E\setminus L$, the remaining pair contributes
\begin{equation}\label{flag:bipartition-pair}
 f_\hbar(\zeta_i,\zeta_j)f_\hbar(\zeta_j,-\zeta_i)
 =1+(\hbar-\hbar^{-1})
          \frac{\zeta_i\zeta_j}{\zeta_i^2-\zeta_j^2}.
\end{equation}
The sign in $\mathcal H(u)\mathcal H(-u)$ is
$(-1)^{|I_0|}=(-1)^{|D|+|L|}$.  Collecting the above factors, we
find that its coefficient at $u^{|m|}p^m$ is
\begin{equation}\label{flag:bipartition-coefficient}
 C_m=(-1)^{|D|}K_{D,E,F}
       S_{|E|}((\zeta_e)_{e\in E};\hbar-\hbar^{-1}).
\end{equation}
The lemma makes this coefficient zero when $E\ne\varnothing$.
For $E=\varnothing$, the two-step orbit norm gives
$K_{D,\varnothing,F}=b_D(\zeta^2;\hbar^2)$.
We have therefore recovered exactly the coefficients of
\eqref{flag:Frobenius-factorization} at $p=2$.
\end{proof}

Extracting coefficients from $\mathcal H(u)\mathcal H(-u)$, we can
write the order-two identity as
\begin{equation}\label{flag:order-two-all-r}
 H_r^{[2]}=H_r^2+
  2\sum_{j=1}^{\min(r,n-r)}(-1)^jH_{r-j}H_{r+j}.
\end{equation}
For example, for $T^*\Fl_3$ we obtain
\begin{equation}\label{flag:n3-k2}
 \begin{aligned}
 H_1^{[2]}&=H_1^2-2H_2,&
 H_2^{[2]}&=H_2^2-2H_1H_3,&
 H_3^{[2]}&=H_3^2.
 \end{aligned}
\end{equation}
Only the shifts $p_1^2$, $p_2^2$, and $p_3^2$ survive in the first
identity.  In the second they are $p_1^2p_2^2$, $p_1^2p_3^2$, and
$p_2^2p_3^2$, with the orbit-norm coefficients of
\eqref{flag:powered-Hr}.  The calculation takes place in
characteristic zero, so the coefficient $2$ is retained.

\subsection{Bloch Multipliers and the Classical Spectral Fiber}
\label{flag:Bloch}

We shall now interpret the powered eigenvalue equations in terms of
the classical tRS spectral fiber.  Let $V$ be a nonzero section on a
lifted shift orbit, with full-cycle multipliers given by
\begin{equation}\label{flag:Bloch-multipliers}
 p_i^pV=\lambda_i V,\qquad 1\leq i\leq n.
\end{equation}
Here the $\lambda_i$ are nonzero scalars on a fixed orbit.
For a meromorphic family, they are functions invariant under all
elementary shifts.

For a symplectic parameter $c$, work over $\C(c^{1/2})$ and consider
\begin{equation}\label{flag:classical-Hr}
 h_r(Z,\pi;c)=c^{-r(n-r)/2}\sum_{|I|=r}
  \prod_{\substack{i\in I\\j\notin I}}
       \frac{cZ_i-Z_j}{Z_i-Z_j}\prod_{i\in I}\pi_i,
 \qquad h_0=1.
\end{equation}
These are the centered classical tRS Hamiltonians with usual coupling $c^{-1}$.
We shall apply this formula with $c=\hbar^p$.

\begin{proposition}\label{flag:powered-Bloch-criterion}
Suppose \eqref{flag:Bloch-multipliers} holds and $Z_i\ne Z_j$
for $i\ne j$.  Then
\begin{equation}\label{flag:powered-eigenfunction}
 H_r^{[p]}V=\mathcal A_{p,r}(H)V=e_r(a_1^p,\ldots,a_n^p)V,
 \qquad 1\leq r\leq n,
\end{equation}
if and only if
\begin{equation}\label{flag:powered-Bloch-levelset}
 h_r(Z,\lambda;\hbar^p)=e_r(a_1^p,\ldots,a_n^p),
 \qquad 1\leq r\leq n.
\end{equation}
\end{proposition}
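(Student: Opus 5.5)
The first equality in \eqref{flag:powered-eigenfunction} is Theorem~\ref{flag:frobenius}, which holds in the formal algebra $\mathscr D_q$. So the whole content of the proposition is the equivalence between $H_r^{[p]}V=e_r(\boldsymbol a^p)V$ and the classical level-set equations. I would prove it by evaluating $H_r^{[p]}$ directly on $V$ using the Bloch multipliers.

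First, I would check that the coefficients of $H_r^{[p]}$ are constant along the lifted orbit. By \eqref{flag:powered-Hr} they are rational functions of $Z_i=\zeta_i^p$ and $\hbar^{p/2}$ only. These are invariant under every elementary shift, since $(q\zeta_i)^p=\zeta_i^p$. They are regular because $Z_i\ne Z_j$, which makes the denominators $Z_i-Z_j$ nonzero along the whole orbit.

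Second, I would evaluate the shifts. The $\Pi_i=p_i^p$ commute with one another, so \eqref{flag:Bloch-multipliers} gives $\Pi_IV=\prod_{i\in I}\lambda_iV$. For a meromorphic family the $\lambda_i$ are shift-invariant functions. Multiplication by such a function therefore commutes with the remaining full-cycle shifts, and iterating stays legitimate. Putting the two steps together,
\[
 H_r^{[p]}V=\hbar^{-pr(n-r)/2}\sum_{|I|=r}\prod_{i\in I,\,j\notin I}\frac{\hbar^pZ_i-Z_j}{Z_i-Z_j}\prod_{i\in I}\lambda_i\,V=h_r(Z,\lambda;\hbar^p)V,
\]
by comparison with \eqref{flag:classical-Hr} at $c=\hbar^p$. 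The normalizations agree: with $c^{1/2}=\hbar^{p/2}$ we get $c^{-r(n-r)/2}=\hbar^{-pr(n-r)/2}$.

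Finally, $V$ is nonzero on the orbit, so $h_r(Z,\lambda;\hbar^p)V=e_r(\boldsymbol a^p)V$ holds if and only if the scalar identity $h_r=e_r(\boldsymbol a^p)$ holds, pointwise on the orbit or identically for a family. This gives both directions of the equivalence.

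The main point needing care is the second step. One has to justify that in $H_r^{[p]}$, with coefficients written to the left of the shifts, the action of $\Pi_I$ on $V$ really produces the scalar $\prod_{i\in I}\lambda_i$. This rests on $\Pi_I$ being central and the $\lambda_i$ being invariant under elementary shifts; without that invariance, the order of applying the $p_i^p$ could introduce shifted multipliers.
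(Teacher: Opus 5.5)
Your proposal is correct and follows the paper's proof. You substitute the full-cycle multipliers into $H_r^{[p]}$ to get $H_r^{[p]}V=h_r(Z,\lambda;\hbar^p)V$, use Theorem~\ref{flag:frobenius} for the equality with $\mathcal A_{p,r}(H)V$, and cancel $V$ at a point where it is nonzero, since the scalar coefficient is constant on the orbit. Your checks on regularity and on the shift invariance of the $\lambda_i$ only make explicit what the paper leaves implicit.
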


\begin{proof}
Substitute the full-cycle eigenvalues \eqref{flag:Bloch-multipliers}
into \eqref{flag:powered-Hr}.  This gives
\[
 H_r^{[p]}V=h_r(Z,\lambda;\hbar^p)V.
\]
By Theorem~\ref{flag:frobenius}, the left side equals
$\mathcal A_{p,r}(H)V$.  The scalar coefficient is constant on a
fixed coordinate orbit, and $V$ has a nonzero value there, proving
necessity.  Sufficiency follows from the same equality.
For a meromorphic family the necessary identity first holds where
$V$ is nonzero and then extends meromorphically.
\end{proof}

In particular, once \eqref{flag:powered-Bloch-levelset} holds,
every section in the same Bloch sector satisfies the powered
equations.  The ordinary joint eigenvalue problem imposes the
further conditions
\begin{equation}\label{flag:joint-eigenfunction}
 H_rV=e_r(a_1,\ldots,a_n)V,\qquad 1\leq r\leq n.
\end{equation}
These give the following special case.

\begin{corollary}\label{flag:Bloch-corollary}
Under \eqref{flag:joint-eigenfunction} and
\eqref{flag:Bloch-multipliers},
\begin{equation}\label{flag:bloch-levelset}
 h_r(Z,\lambda;\hbar^p)=e_r(a_1^p,\ldots,a_n^p),
 \qquad 1\leq r\leq n.
\end{equation}
\end{corollary}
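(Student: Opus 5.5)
The plan is to derive Corollary~\ref{flag:Bloch-corollary} from two earlier results: Corollary~\ref{flag:eigenfunction-frobenius} turns the ordinary joint equations \eqref{flag:joint-eigenfunction} into the powered equations \eqref{flag:powered-eigenfunction}, and the necessity direction of Proposition~\ref{flag:powered-Bloch-criterion} turns those into the level-set equations \eqref{flag:bloch-levelset}. Throughout I keep the standing hypotheses of the preceding proposition: $V$ is a nonzero section on a lifted shift orbit, and $Z_i\ne Z_j$ for $i\ne j$. The second hypothesis is exactly the condition $\zeta_i^p\ne\zeta_j^p$ that Corollary~\ref{flag:eigenfunction-frobenius} needs for the operators $H_r^{[p]}$ to be defined on the orbit.

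The first step applies Corollary~\ref{flag:eigenfunction-frobenius} to $V$. Since the $H_r$ commute and act on $V$ by the scalars $e_r(a)$, the polynomial $\mathcal A_{p,r}(H)$ acts by $\mathcal A_{p,r}(e_1(a),\ldots,e_n(a))$. By the definition \eqref{flag:Adams-definition}, this scalar equals $e_r(a_1^p,\ldots,a_n^p)$. Theorem~\ref{flag:frobenius} then identifies $\mathcal A_{p,r}(H)$ with $H_r^{[p]}$, which gives $H_r^{[p]}V=e_r(\boldsymbol a^p)V$ for $1\leq r\leq n$.

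The second step uses the Bloch multipliers. By \eqref{flag:Bloch-multipliers}, each $\Pi_I=p_I^p$ acts on $V$ by $\prod_{i\in I}\lambda_i$. The coefficients of $H_r^{[p]}$ depend only on the $Z_i$, and these are constant along the orbit. Comparing \eqref{flag:powered-Hr} with \eqref{flag:classical-Hr} at $c=\hbar^p$ therefore gives $H_r^{[p]}V=h_r(Z,\lambda;\hbar^p)V$. Equating this with the first step yields $\bigl(h_r(Z,\lambda;\hbar^p)-e_r(\boldsymbol a^p)\bigr)V=0$.

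The only point needing care is cancelling $V$. I will evaluate this identity at an orbit point where $V$ is nonzero; such a point exists because $V$ is nonzero. The prefactor is a scalar constant on the orbit, so it must vanish. For a meromorphic family, the identity holds on the dense set where $V\ne0$ and then extends by meromorphic continuation, exactly as in the proof of Proposition~\ref{flag:powered-Bloch-criterion}. Beyond this, I expect no real obstacle, since the statement is the specialization of that proposition to the joint eigenfunctions supplied by Corollary~\ref{flag:eigenfunction-frobenius}.
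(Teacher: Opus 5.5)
Your proposal is correct and is essentially the paper's argument. You act on $V$ with the identity $\mathcal A_{p,r}(H)=H_r^{[p]}$ of Theorem~\ref{flag:frobenius}, evaluate one side by the joint eigenvalues to get $e_r(a^p)$ and the other by the Bloch multipliers to get $h_r(Z,\lambda;\hbar^p)$, and then cancel $V$ at a nonzero value using shift invariance of the scalars, exactly as the paper does (your explicit mention of $Z_i\ne Z_j$ just spells out what the paper leaves implicit).
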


\begin{proof}
Acting on $V$ with \eqref{flag:Adams-operator}, the left side gives
$\mathcal A_{p,r}(e_1(a),\ldots,e_n(a))=e_r(a^p)$, while the right
side gives $h_r(Z,\lambda;\hbar^p)$.
We can cancel $V$ at a nonzero value.  Since the coefficients are
shift invariant, this proves the assertion on the fixed orbit.
In a meromorphic family the identity extends from the nonvanishing
locus.
\end{proof}

The same statement can be expressed through the spectral polynomial
\begin{equation}\label{flag:classical-spectral-polynomial}
 \mathscr P(v;Z,\pi,c)=\sum_{r=0}^n(-1)^rh_r(Z,\pi;c)v^{n-r}
\end{equation}
which satisfies
\begin{equation}\label{flag:spectral-polynomial-identity}
 \mathscr P(v;Z,\lambda,\hbar^p)
       =\prod_{i=1}^n(v-a_i^p).
\end{equation}
This is the classical spectral fiber appearing in both the criterion
and its ordinary-eigenfunction special case.  For finite sums, Bethe
closure gives the full-cycle multipliers explicitly.  The classical
tRS/Bethe correspondence of \cite{KPSZ21} (Theorems~4.6 and~4.9),
applied at powered parameters, places them on
\eqref{flag:powered-Bloch-levelset}.  It follows that every admissible
rational insertion satisfies the powered equations, as we shall prove
in Theorem~\ref{cyc:finite-frobenius}; a nonzero sum is a joint
eigenfunction of these operators.  This gives a second application
of the operator identity, alongside its direct consequence for an
ordinary joint eigenfunction in
Corollary~\ref{flag:eigenfunction-frobenius}.

Recall that the momenta $\lambda_i$ measure full-cycle transport.
In general they are not the $p$th powers of momenta evaluated at the
original parameters.  This is the same distinction that appears for
K\"ahler monodromy: its matrices are multiplied at successively
shifted positions, rather than taking the power of one matrix at a
fixed position.  The classical fiber of the cyclic sums is described
by these full-cycle multipliers.

The homogeneous normalization has a simple effect on the spectrum.
Multiplying a section by $b^{m_1+\cdots+m_n}$ on the lifted lattice
replaces the spectral parameters by $ba_i$, multiplies the $r$th
energy by $b^r$, and multiplies each Bloch multiplier by $b^p$.

\begin{proposition}
\label{flag:generic-degree}
For generic distinct nonzero $Z_i$, generic $c\ne0$, and generic
$s_1,\ldots,s_n$ with $s_n\ne0$, the equations
\begin{equation}\label{flag:classical-fiber}
 h_r(Z,\pi;c)=s_r,\qquad 1\leq r\leq n,
\end{equation}
have exactly $n!$ distinct solutions in $(\C^*)^n$.  In particular,
it holds for generic spectral data $s_r=e_r(a_1^p,\ldots,a_n^p)$.
\end{proposition}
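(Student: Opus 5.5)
The plan is to deform to the decoupled point $c=1$ and transport the solution count back with Bernstein--Kushnirenko theory in the momentum variables $\pi$, keeping the positions $Z$ fixed. For fixed distinct nonzero $Z_i$, each $h_r(Z,\pi;c)$ is a Laurent polynomial in $\pi$ supported on the monomials $\pi_I$, $|I|=r$. Its coefficient at $\pi_I$ is
\[
c^{-r(n-r)/2}\prod_{i\in I,\,j\notin I}\frac{cZ_i-Z_j}{Z_i-Z_j}.
\]
For generic $c$ (and in particular near $c=1$) none of these coefficients vanishes. The Newton polytope of $h_r-s_r$ is therefore the convex hull of the hypersimplex $\Delta(r,n)$ and the origin, independently of $c$. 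Hence the mixed volume $\mathrm{MV}$ of the system \eqref{flag:classical-fiber} does not depend on $c$ or on the generic $Z$.

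\textbf{Step 1: compute $\mathrm{MV}$ at $c=1$.} At $c=1$ every $b_I$ equals one, so $h_r=e_r(\pi)$. The system $e_r(\pi)=s_r$ says exactly that $\pi_1,\ldots,\pi_n$ are, in some order, the roots $\sigma_1,\ldots,\sigma_n$ of $v^n-s_1v^{n-1}+\cdots+(-1)^ns_n$. For generic $s$ with $s_n\ne0$ these roots are distinct and nonzero, so we get exactly $n!$ solutions in $(\C^*)^n$. Each is nondegenerate, because the Jacobian of $\pi\mapsto e(\pi)$ is the Vandermonde determinant $\prod_{i<j}(\pi_i-\pi_j)\ne0$. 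To conclude $\mathrm{MV}=n!$, I would invoke Bernstein's second theorem. It requires that for every nonzero weight vector $\omega$, the initial (facial) system has no solution in the torus. I would check this directly. The $\omega$-initial form of $e_r(\pi)-s_r$ is either $-s_r$ (when the minimal weight of $\Delta(r,n)$ is positive) or a partial elementary symmetric function in the variables of minimal weight. Tracking the smallest weight class shows that a torus solution would force an equation $s_r=0$, or a relation among the $\sigma$'s, which fails generically. This includes the case $r=n$, since $s_n\ne0$.

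\textbf{Step 2: pass to generic $c$.} By Bernstein's first theorem, for any $c$ the number of isolated solutions in $(\C^*)^n$ is at most $\mathrm{MV}=n!$. Conversely, the $n!$ nondegenerate solutions at $c=1$ persist by the implicit function theorem for $c$ near $1$, remain in the torus, and remain distinct. So for $c$ in a neighborhood of $1$ there are exactly $n!$ distinct solutions, all nondegenerate. Now consider the incidence variety of $(Z,c,s,\pi)$ over parameter space. The locus where the fiber consists of exactly $n!$ reduced points is constructible. It is also Zariski open: it is the complement of the vanishing of the Jacobian-discriminant together with the facial-system resultants. It is nonempty by the above, hence dense, which gives the statement for generic $(Z,c,s)$. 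The last assertion follows because $s_r=e_r(a_1^p,\ldots,a_n^p)$ is a dominant parametrization of $s$ with $s_n=\prod a_i^p\ne0$, so generic $a$ gives generic $s$.

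\textbf{Expected obstacle.} The main difficulty is Step 1's verification that no facial system has toric solutions, i.e.\ that there are no solutions at infinity, so that $\mathrm{MV}$ is exactly $n!$ and not larger. If the facial analysis becomes messy, I would fall back on a direct identification: the mixed volume of the hypersimplices $\Delta(1,n),\ldots,\Delta(n,n)$, each augmented by the origin, can be computed as the degree of the map $\pi\mapsto(e_1(\pi),\ldots,e_n(\pi))$ on the torus. That degree is $n!$, since the map is the quotient by $S_n$. This again gives the upper bound $n!$.
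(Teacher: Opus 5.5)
Your argument is correct in substance, but it takes a different route from the paper.

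\emph{The paper's route.} The paper stays within total-degree intersection theory. Since $h_r(Z,\pi;c)$ is homogeneous of degree $r$ in $\pi$, it homogenizes the system as $h_r-s_rw^r=0$ in $\mathbf P^n$. At $c=1$ there is no point at infinity, because $e_r(\pi)=0$ for all $r$ forces $\pi=0$, and the $n!$ permutation solutions have Vandermonde Jacobian. Properness of the projective incidence variety then makes ``no point at infinity and all points nonsingular'' an open condition. On that open set B\'ezout gives exactly $1\cdot2\cdots n=n!$ reduced points, and $h_n=\pi_1\cdots\pi_n=s_n\neq0$ places them in the torus.

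\emph{Your route and what each buys.} You work in the torus with the polytopes $\operatorname{conv}(0,\Delta(r,n))$ and Bernstein's theorems. The paper's route is more elementary, since the B\'ezout number is already sharp here and no polytope or facial computations are needed. Your route buys uniformity in $c$. Your facial check works for every $c$ with all $b_I(Z;c)\neq0$, i.e.\ $c\notin\{Z_j/Z_i\}$, not only at $c=1$:
\begin{itemize}
\item if $\min\omega<0$ is attained on the set $G$, the equation with $r=|G|$ has the single-monomial initial form $b_G\pi_G$;
\item otherwise $\sum_i\omega_i>0$, and the $r=n$ equation has initial form $-s_n\neq0$.
\end{itemize}
Bernstein's second theorem then gives, for all such $c$, $n!$ solutions counted with multiplicity, all isolated. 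Genericity is needed only for reducedness.

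\emph{Two points to tighten.}
\begin{enumerate}
\item Bernstein's first theorem bounds only isolated solutions. Your ``exactly $n!$'' near $c=1$ via the implicit function theorem therefore does not by itself exclude positive-dimensional components. The uniform facial check above excludes them and makes the reduced locus Zariski open; equivalently, use properness of the toric compactification, which is presumably what your ``facial-system resultants'' means.
\item Your fallback does not give an upper bound. The degree-$n!$ count for the specific coefficients $e_r$ only yields $\mathrm{MV}\geq n!$, and identifying it with $\mathrm{MV}$ already presupposes the facial check. The upper bound comes from monotonicity: $\operatorname{conv}(0,\Delta(r,n))$ lies in $r$ times the standard simplex, so $\mathrm{MV}\leq n!$. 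That is precisely the paper's B\'ezout bound.
\end{enumerate}
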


\begin{proof}
Since $h_r$ is homogeneous of degree $r$ in $\pi$ and
$h_n=\pi_1\cdots\pi_n$, the equations homogenize in $\mathbf P^n$ as
\begin{equation}\label{flag:homogenized-fiber}
 h_r(Z,\pi;c)-s_rw^r=0.
\end{equation}
At $c^{1/2}=1$ we recover the elementary symmetric equations.
They have no point at infinity: the conditions $e_r(\pi)=0$ for
every $r$ would force all $\pi_i=0$.  Taking
$s_r=e_r(d_1,\ldots,d_n)$ with distinct nonzero $d_i$, we obtain
their $n!$ permutations as solutions.  At each solution the Jacobian
is, up to sign, the nonzero Vandermonde product.

Exclude the position diagonals and $c=0$.  The projective incidence
variety is then proper over the parameter space, so the images of
its infinity locus and of the locus where the projective Jacobian
has rank less than $n$ are closed.  Their complement contains the
point constructed above.  Over this open set a fiber is projective
and lies in an affine chart; all its points are nonsingular of
dimension zero.  Thus it is finite and reduced, and B\'ezout's
theorem gives $1\cdot2\cdots n=n!$ points.  Finally,
$h_n=s_n\ne0$ places all these points in the torus.

The map taking $(a_1,\ldots,a_n)\in(\C^*)^n$ to
$e_r(a_1^p,\ldots,a_n^p)$ is dominant, so the same open condition
holds for generic spectral data.
\end{proof}

We have counted the classical momentum points of the spectral fiber.
For the finite functionals associated with closed Bethe orbits, these
momenta are the full-cycle multipliers and give the powered equations
of Theorem~\ref{cyc:finite-frobenius}.  We now turn to the explicit
construction of the internal sums and their multipliers.

\subsection{The Flag Phase and K\"ahler Transport}
\label{cyc:construction}

Let us describe the finite sums associated with the flag vertex
phase.  The internal shift ratios determine their weights, while
K\"ahler shifts give the Bloch multipliers and the corresponding
tRS momenta.

We denote by $\mathcal V_r$ the rank-$r$ tautological quotient
bundles on $\Fl_n$ and their pullbacks to $T^*\Fl_n$.
They fit into the sequence of quotient maps
\[
 \mathcal W=\C^n\otimes\mathcal O
       \longrightarrow\mathcal V_{n-1}
       \longrightarrow\cdots\longrightarrow\mathcal V_1.
\]
Using the splitting notation for their equivariant $K$-classes, we write
\[
 [\mathcal V_r]=\sum_{u=1}^r x_{r,u},\qquad x_{n,i}=a_i,\qquad
 L_r=\prod_{u=1}^r x_{r,u},\qquad L_0=1,\quad L_n=a_1\cdots a_n.
\]
Here $L_r$ represents $\det\mathcal V_r$.  Its evaluation at
Bethe roots gives the corresponding quantum eigenvalue.  We use
$\mathcal V_r$ for the bundles and $V$ for the vertex function.
In these variables the polarization takes the form
\begin{equation}\label{cyc:polarization}
 P=\sum_{r=1}^{n-1}\sum_{u=1}^r\sum_{v=1}^{r+1}
       \frac{x_{r,u}}{x_{r+1,v}}
   -\sum_{r=1}^{n-1}\sum_{u,v=1}^r\frac{x_{r,u}}{x_{r,v}},
 \qquad TX=P+\hbar^{-1}P^*.
\end{equation}
The contribution of $L_r$ to $\det P$ has exponent
$(r+1)-(r-1)=2$.  The shifted geometric coordinates are therefore
$z_{\#,r}=z_r/\hbar$.  For $|q|<1$ let
\[
 \varphi(w,q)=\prod_{d\geq0}(1-q^dw),\qquad
 B_q(w)=\frac{\varphi(qw,q)}{\varphi(\hbar w,q)}.
\]
If $P=\sum_w m_w w$, the phase of the vertex integral is given by
\begin{equation}\label{cyc:flag-phase}
 \mathscr F(\boldsymbol z,\boldsymbol a,\boldsymbol x;q)
 =\prod_{r=1}^{n-1}
   \exp\left(\frac{\log(z_r/\hbar)\log L_r}{\log q}\right)
   \prod_w B_q(w)^{m_w}.
\end{equation}
The rational shift ratios of this phase remain the data of our
construction at roots of unity.  They define the finite sums
below without evaluating the infinite products in
\eqref{cyc:flag-phase} at such values of $q$.

For generic $q$ we can write the vertex functional with insertion
$f$ as
\[
 \mathcal S_\Gamma[f]
 =\mathcal N_\Gamma\,\chi(\boldsymbol\zeta)
   \int_\Gamma\mathscr F(\boldsymbol z,\boldsymbol a,\boldsymbol x;q)
     f(\boldsymbol x)
     \prod_{\alpha=1}^{R}\frac{\dd x_\alpha}{2\pi\mathrm i x_\alpha},
 \qquad R=\frac{n(n-1)}2.
\]
Here $\Gamma$ is a cycle or residue prescription selecting a
fixed-point branch.  The factor $\chi$ contains the homogeneous
character and its centering, which we describe below, and
$\mathcal N_\Gamma$ sets the leading coefficient of the bare
fixed-point vertex equal to one.  We keep the same cycle and
normalization for all insertions.  In particular,
$\mathcal S_\Gamma[1]$ is the vertex with unit insertion.

Let $T_{r,u}$ be the shift which multiplies $x_{r,u}$ by $q$.
Writing $x=x_{r,u}$, we obtain the internal transport ratio
\begin{equation}\label{cyc:internal-ratios}
\begin{split}
 \Phi_{r,u}={}&\frac{z_r}{\hbar}
 \prod_{v=1}^{r+1}\frac{1-\hbar x/x_{r+1,v}}{1-qx/x_{r+1,v}}
 \prod_{v=1}^{r-1}\frac{1-x_{r-1,v}/x}{1-\hbar x_{r-1,v}/(qx)}\\
 &\times\prod_{\substack{1\leq v\leq r\\v\ne u}}
 \frac{(1-qx/x_{r,v})(1-\hbar x_{r,v}/(qx))}
      {(1-\hbar x/x_{r,v})(1-x_{r,v}/x)}.
\end{split}
\end{equation}
Indeed, this expression follows by applying
$B_q(qw)/B_q(w)=(1-\hbar w)/(1-qw)$ to the weights of $P$.
Here and below an empty product equals one.

We next consider a shift of the geometric K\"ahler coordinate
$z_r$.  It multiplies the phase by $L_r$.  The homogeneous
extension contains $\exp(\log\zeta_n\log L_n/\log q)$ and hence
has central character $L_n$.  In the coordinates
\eqref{intro:kahler-coordinates}, multiplication by a character
with $p_i$-multiplier $\hbar^{(n+1)/2-i}$ gives
\begin{equation}\label{cyc:external-ratios}
 \rho_i(\boldsymbol x)=
   \hbar^{(n+1)/2-i}\frac{L_i}{L_{i-1}},\qquad 1\leq i\leq n.
\end{equation}
The shift $p_i$ acts forward on $z_i$ and backward on $z_{i-1}$
whenever these coordinates exist, so the simultaneous shift has
character $\prod_i\rho_i=a_1\cdots a_n$.
This expression also has a direct interpretation in quantum
$K$-theory.  Let $\widehat\tau$ be the quantum tautological class
associated with an insertion $\tau$, and denote the quantum
product by $\star$.  We write $\star\,\gamma$ for quantum
multiplication by a class $\gamma$.  In our normalization,
\cite{KPSZ21} (Theorem~4.6, equation~(59)) identifies the tRS
momentum on the Bethe level set with the operator
\begin{equation}\label{cyc:quantum-momentum}
 \mathsf P_d^{\mathrm{qK}}
 =\hbar^{(n+1)/2-d}\star\!\left(
   \widehat{\det\mathcal V_d}\star
   \widehat{\det(\mathcal V_{d-1}^{\vee})}\right),
 \qquad 1\leq d\leq n,
\end{equation}
where $\mathcal V_0=0$ and $\mathcal V_n=\mathcal W$.
According to \cite{KPSZ21} (Theorem~3.4), the two quantum classes
have Bethe eigenvalues $L_d$ and $L_{d-1}^{-1}$, respectively.
Thus the eigenvalue of $\mathsf P_d^{\mathrm{qK}}$ is precisely
$\rho_d$.  Notice that this identification uses the quantum
classes and their quantum product: on the generic semisimple
locus the dual determinant represents the quantum inverse.
The cotangent weight in \cite{KPSZ21} is $\hbar^{-1}$ in our
notation.  Dividing its momenta by the common factor
$\hbar^{-n/2}$ gives the spectral parameters $a_1,\ldots,a_n$.
When $\mathcal B_{p,\alpha}=1$, the roots $\boldsymbol x^p$ solve the Bethe
equations at $(\boldsymbol z^p,\boldsymbol a^p,\hbar^p)$.
The momenta \eqref{cyc:flag-momenta} below are therefore quantum
multiplication eigenvalues at these powered parameters.  This
does not require $\boldsymbol x$ to solve the Bethe equations at
the original parameters.

The internal and external ratios obey the flatness identities
\begin{align}\label{cyc:flatness}
 (T_\beta\Phi_\alpha)\Phi_\beta
   &=(T_\alpha\Phi_\beta)\Phi_\alpha,\notag\\
 (p_j\rho_i)\rho_j&=(p_i\rho_j)\rho_i,\qquad
 (p_i\Phi_\alpha)\rho_i=(T_\alpha\rho_i)\Phi_\alpha.
\end{align}
The first relation follows from commuting the internal shifts of
the phase.  The second follows from the independence of $\rho_i$
of $\boldsymbol z$.  For $\alpha=(r,u)$, the two sides of the
mixed relation acquire the same factor
$q^{\delta_{ir}-\delta_{i,r+1}}$.

Now set $q=\omega_p$ and $R=n(n-1)/2$.  Independently shifting
the nonzero variables $x_{r,u}$ gives an internal orbit with
$p^R$ points.  We assume that it avoids zeros and poles of the
phase ratios and poles of the rational insertions.  Define
\begin{equation}\label{cyc:norms}
 \mathcal B_{p,\alpha}=\prod_{d=0}^{p-1}T_\alpha^d\Phi_\alpha,
 \qquad \lambda_i=\rho_i^p.
\end{equation}
For one internal variable, $\mathcal B_{p,\alpha}$ is the
product $\mathcal B_p$ of Section~\ref{sec:two}.
These norms are invariant under both internal and K\"ahler shifts.
For an internal shift this follows from the telescoping product
\[
 \frac{T_\beta \mathcal B_{p,\alpha}}{\mathcal B_{p,\alpha}}
 =\prod_{d=0}^{p-1}
   \frac{T_\alpha^{d+1}\Phi_\beta}{T_\alpha^d\Phi_\beta}
 =\frac{T_\alpha^p\Phi_\beta}{\Phi_\beta}=1.
\]
Here we used flatness.  The mixed relation in
\eqref{cyc:flatness} gives the same calculation for a K\"ahler
shift.  Using $\prod_{d=0}^{p-1}(1-q^dw)=1-w^p$, we find
\begin{equation}\label{cyc:powered-Bethe}
 \mathcal B_{p,(r,u)}=\Phi_{r,u}\big|_{q=1}
       (\boldsymbol z^p,\boldsymbol a^p,\boldsymbol x^p;\hbar^p).
\end{equation}
Hence $\mathcal B_{p,(r,u)}=1$ are the Bethe equations at the powered
parameters.  The corresponding K\"ahler multipliers read
\begin{equation}\label{cyc:flag-momenta}
 \lambda_i=\hbar^{p((n+1)/2-i)}
           \left(\frac{L_i}{L_{i-1}}\right)^p,
 \qquad
 \prod_{i=1}^r\lambda_i=\hbar^{pr(n-r)/2}L_r^p.
\end{equation}
In particular, the monodromy of the geometric shift
$z_r\mapsto qz_r$ has eigenvalue $L_r^p$.  The powers of
$\hbar$ in \eqref{cyc:flag-momenta} come from the centered
homogeneous gauge.

\subsection{Bethe Equations as Closure Conditions}

We shall now interpret the Bethe equations as closure conditions
for the finite internal grid.  They make the phase weights
periodic and hence allow twisted total differences to cancel
under summation.

The summation variables are the multiplicative tautological roots
$x_{r,u}$, $1\leq u\leq r<n$, of the phase
\eqref{cyc:flag-phase}.  There are $R=n(n-1)/2$ such variables,
which we also denote by $x_\alpha$ for $\alpha=(r,u)$.
Let $\boldsymbol x^0=(x_{r,u}^0)$ be a point of this auxiliary
torus with all coordinates nonzero.  The superscript $0$ records
the initial point of the orbit.  These variables represent
tautological $K$-classes; they are not local coordinates on
$T^*\Fl_n$.

For the internal shifts we fix the K\"ahler parameters
$\boldsymbol z$, or equivalently the chosen homogeneous
parameters $\boldsymbol\zeta$, together with the equivariant
parameters $\boldsymbol a$ and the coupling $\hbar$; throughout,
$q=\omega_p$.  The top row $x_{n,i}=a_i$ is therefore not part
of the sum.  Each $T_{r,u}$ changes only $x_{r,u}$ to $qx_{r,u}$.
External K\"ahler transport varies $\boldsymbol\zeta$ and
$\boldsymbol z$ with $\boldsymbol a$ and $\hbar$ fixed.
The internal shifts produce the orbit
\begin{equation}\label{cyc:finite-grid}
 \mathcal O=\{\boldsymbol x_{\boldsymbol j}
     =(q^{j_\alpha}x_\alpha^0)_{\alpha=1}^R:
        \boldsymbol j\in(\Z/p\Z)^R\}.
\end{equation}
The $p^R$ points are distinct.  Indeed, a product of shifts
fixes $\boldsymbol x^0$ precisely when each exponent is divisible
by $p$; thus the orbit is \emph{free}.
We assign a nonzero scalar
$\Phi_\alpha:\mathcal O\to\C^\times$ to every directed edge
$\boldsymbol x\to T_\alpha\boldsymbol x$ and define transport
along a path by the product of these scalars.  Flatness identifies
the products along the two paths across a coordinate square:
\begin{equation}\label{cyc:square-flatness}
 \Phi_\alpha(\boldsymbol x)\Phi_\beta(T_\alpha\boldsymbol x)
 =\Phi_\beta(\boldsymbol x)\Phi_\alpha(T_\beta\boldsymbol x).
\end{equation}
Consequently transport on the lifted grid is independent of the
order of the steps.  After $p$ steps in direction $\alpha$ we
return to the same vertex, but the weight is multiplied by
$\mathcal B_{p,\alpha}$.  Flatness around squares therefore leaves a separate
closure condition on each full coordinate loop.

For a function $f$ on $\mathcal O$ let its twisted difference be
\begin{equation}\label{cyc:twisted-difference}
 (\Delta_\alpha f)(\boldsymbol x)
   =\Phi_\alpha(\boldsymbol x)f(T_\alpha\boldsymbol x)
     -f(\boldsymbol x).
\end{equation}
Suppose the weights satisfy
$\mu(T_\alpha\boldsymbol x)=\Phi_\alpha(\boldsymbol x)\mu(\boldsymbol x)$,
so that $\mu(\boldsymbol x)\Delta_\alpha f(\boldsymbol x)$ is
the difference of adjacent weighted values of $f$.  The following
criterion determines when the weights are periodic and these
differences sum to zero.

\begin{theorem}\label{cyc:closure}
Let $\mathcal O$ be the grid \eqref{cyc:finite-grid}, with
nonzero edge factors $\Phi_\alpha:\mathcal O\to\C^\times$
satisfying \eqref{cyc:square-flatness}.  The following conditions
are equivalent:
\begin{enumerate}
 \item $\mathcal B_{p,\alpha}(\boldsymbol x^0)=1$ for every internal direction;
 \item there is a nonzero weighted evaluation functional
 \[
  \mathcal S[f]=\sum_{\boldsymbol x\in\mathcal O}
                      \mu(\boldsymbol x)f(\boldsymbol x)
 \]
 such that $\mathcal S[\Delta_\alpha f]=0$ for every $\alpha$
 and every function $f$ on $\mathcal O$.
\end{enumerate}
Such a functional is unique up to a scalar and has no zero
weights.  The same criterion is obtained by testing only rational
functions regular on the orbit.
\end{theorem}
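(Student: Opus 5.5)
The plan is to convert condition (2) into a first-order recursion for the weights $\mu$ and then solve that recursion on the grid. First, reindex the sum $\mathcal S[\Delta_\alpha f]$ by the bijection $\boldsymbol y=T_\alpha\boldsymbol x$ of $\mathcal O$:
\[
 \mathcal S[\Delta_\alpha f]=\sum_{\boldsymbol y\in\mathcal O}
 \bigl[\mu(T_\alpha^{-1}\boldsymbol y)\Phi_\alpha(T_\alpha^{-1}\boldsymbol y)-\mu(\boldsymbol y)\bigr]f(\boldsymbol y).
\]
Taking $f$ to be the indicator functions of single orbit points, the condition $\mathcal S[\Delta_\alpha f]=0$ for all $f$ becomes equivalent to the recursion
\[
\mu(T_\alpha\boldsymbol x)=\Phi_\alpha(\boldsymbol x)\mu(\boldsymbol x)\quad\text{for all }\boldsymbol x\in\mathcal O\text{ and all }\alpha.
\]
The final sentence of the theorem follows from the same reduction. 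The $p^R$ orbit points are distinct because the orbit is free. Hence each indicator function is the restriction of a polynomial in the $x_\alpha$, for instance a product of one-variable Lagrange interpolants, which is regular on $\mathcal O$.

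Uniqueness and the absence of zero weights come directly from the recursion. Since every $\Phi_\alpha$ is nonzero and $\mathcal O$ is connected by the shifts $T_\alpha$, the recursion determines $\mu$ everywhere from the single value $\mu(\boldsymbol x^0)$. If $\mu(\boldsymbol x^0)=0$, then $\mu\equiv0$. A nonzero solution is therefore unique up to scalar and never vanishes.

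For existence, I will lift to the covering lattice $\Z^R\to\mathcal O$, $\boldsymbol j\mapsto\boldsymbol x_{\boldsymbol j}$. On $\Z^R$, define $\tilde\mu(\boldsymbol j)$ as the transport from $0$ to $\boldsymbol j$ along any lattice path, that is, the product of the $\Phi_\alpha^{\pm1}$ on its edges. The square flatness \eqref{cyc:square-flatness} shows the result does not depend on the path. The argument is that any two monotone paths differ by elementary square moves, and backtracking edges cancel. The remaining question is whether $\tilde\mu$ descends to $\mathcal O$, i.e.\ whether $\tilde\mu(\boldsymbol j+pe_\alpha)=\tilde\mu(\boldsymbol j)$. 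Walking $p$ steps in direction $\alpha$ from $\boldsymbol j$ gives
\[
\tilde\mu(\boldsymbol j+pe_\alpha)=\mathcal B_{p,\alpha}(\boldsymbol x_{\boldsymbol j})\,\tilde\mu(\boldsymbol j).
\]
Earlier I showed, using only flatness, that $T_\beta\mathcal B_{p,\alpha}=\mathcal B_{p,\alpha}$ (the telescoping computation after \eqref{cyc:norms}). So $\mathcal B_{p,\alpha}$ is constant on $\mathcal O$ and equals $\mathcal B_{p,\alpha}(\boldsymbol x^0)$.

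Both directions of the equivalence now follow. Assume (1). Then $\tilde\mu$ is $p\Z^R$-periodic and descends to a nonzero solution $\mu$ of the recursion, which gives (2). Conversely, assume (2). The resulting nowhere-zero $\mu$, pulled back to $\Z^R$, agrees with $\mu(\boldsymbol x^0)\tilde\mu$ by uniqueness of solutions of the recursion on the lattice. Since the pullback is periodic, $\mathcal B_{p,\alpha}(\boldsymbol x^0)=1$ for every $\alpha$.

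I expect the main obstacle to be the well-definedness step. It requires a careful argument that square flatness gives path independence on $\Z^R$, together with a reduction showing that the only remaining holonomies are the full coordinate loops. I would handle this with the standard presentation of $\Z^R$, or equivalently a simple-connectivity argument for its square complex, and check that the generators $pe_\alpha$ of the deck group correspond exactly to the norms $\mathcal B_{p,\alpha}$.
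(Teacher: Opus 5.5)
Your proposal is correct and follows essentially the same route as the paper. You derive the recursion $\mu(T_\alpha\boldsymbol x)=\Phi_\alpha(\boldsymbol x)\mu(\boldsymbol x)$ by comparing coefficients, obtain uniqueness and nonvanishing from connectivity, get necessity by iterating around a coordinate loop, get sufficiency from path independence (square flatness plus trivial loop holonomy), and handle rational test functions by interpolation on the distinct orbit points. Your lift to $\Z^R$ and your explicit use of the shift invariance of $\mathcal B_{p,\alpha}$, which turns the base-point condition at $\boldsymbol x^0$ into closure of every loop, spell out what the paper's proof uses implicitly.
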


\begin{proof}
Comparing the coefficient of $f(T_\alpha\boldsymbol x)$ in
$\mathcal S[\Delta_\alpha f]=0$, we obtain
\begin{equation}\label{cyc:weight-recursion}
 \mu(T_\alpha\boldsymbol x)=\Phi_\alpha(\boldsymbol x)\mu(\boldsymbol x).
\end{equation}
The orbit is connected and all edge factors are nonzero, so one
nonzero weight determines nonzero weights at every point.
Iteration of \eqref{cyc:weight-recursion} around a coordinate
loop forces $\mathcal B_{p,\alpha}=1$.
Conversely, flatness allows us to interchange adjacent steps,
and $\mathcal B_{p,\alpha}=1$ makes every full coordinate loop trivial.
Starting with $\mu(\boldsymbol x^0)=1$, the recursion therefore
defines weights on $\mathcal O$ independently of the chosen
path.  Reindexing the sum gives
$\mathcal S[\Delta_\alpha f]=0$, and the same recursion proves
uniqueness up to normalization.
Finally, the orbit consists of distinct points.  Polynomial
interpolation realizes arbitrary values on this finite set,
which proves the assertion about rational test functions.
\end{proof}

Geometrically, the Bethe equations make the holonomy of the
rank-one local system of internal transport trivial.  The
weighted orbit is then a closed cycle with coefficients in this
local system, and the theorem is a finite version of Stokes'
formula.  It gives a general formulation of the cancellation
mechanism in \cite{TV26} (Sections~3 and~5), using the Bethe
equations and discrete exactness already present there.

To write the weights explicitly, order the internal indices as
$\alpha=1,\ldots,R$ and set, on the lifted lattice,
\begin{equation}\label{cyc:internal-transport}
 \mu_{\boldsymbol j}(\boldsymbol z,\boldsymbol a)=
 \prod_{\alpha=1}^R\prod_{d=0}^{j_\alpha-1}
 \Phi_\alpha
   (T_1^{j_1}\cdots T_{\alpha-1}^{j_{\alpha-1}}T_\alpha^d
       \boldsymbol x^0;\boldsymbol z,\boldsymbol a),
 \qquad 0\leq j_\alpha<p.
\end{equation}
Here $\mu_{\boldsymbol0}=1$.  At a Bethe point the weights
are periodic in each $j_\alpha$.  Before imposing closure, the
sum in one direction has the boundary term, with
$\mathcal B_p=\prod_{d=0}^{p-1}T^d\Phi$,
\begin{equation}\label{cyc:boundary-defect}
 \sum_{j=0}^{p-1}\mu_j
       \bigl(\Phi(T^j\boldsymbol x^0)f(T^{j+1}\boldsymbol x^0)
                     -f(T^j\boldsymbol x^0)\bigr)
  =(\mathcal B_p-1)f(\boldsymbol x^0).
\end{equation}
For several directions we apply this calculation to each
coordinate line, keeping the transverse weights fixed.  The
opposite faces of the weighted cube cancel precisely when their
coordinate holonomies are one.

\subsection{Finite Sums and the Frobenius tRS Equations}

The closed weights define finite vertex sums with insertions.
We shall use their K\"ahler multipliers and the classical
tRS/Bethe correspondence to obtain the tRS equations precomposed
with the Frobenius action.

Let $\boldsymbol\zeta(\boldsymbol m)=q^{\boldsymbol m}
\boldsymbol\zeta^0$, with $\boldsymbol z(\boldsymbol m)$ given
by \eqref{intro:kahler-coordinates}.  We keep $\boldsymbol a$
and the initial internal point $\boldsymbol x^0$ fixed and choose
the character
\[
 G(\boldsymbol m)=\prod_i\rho_i(\boldsymbol x^0)^{m_i}.
\]
For a solution of $\mathcal B_{p,\alpha}=1$, the finite functional is
\begin{equation}\label{cyc:functional-definition}
 \mathcal S_{\boldsymbol m}[f]
 =G(\boldsymbol m)
  \sum_{\boldsymbol j\in\{0,\ldots,p-1\}^{R}}
 \mu_{\boldsymbol j}(\boldsymbol z(\boldsymbol m),\boldsymbol a)
 f(T^{\boldsymbol j}\boldsymbol x^0;\boldsymbol\zeta(\boldsymbol m),\boldsymbol a),
 \quad R=\frac{n(n-1)}2.
\end{equation}
Here $f$ is the descendant insertion.  The choice $f=1$ gives
$\mathcal S_{\boldsymbol m}[1]$, the finite cyclic analogue of
the vertex integral with unit insertion.  Its phase is represented
by the weights $\mu_{\boldsymbol j}$ together with $G$.
We assume that $f$ is rational and regular on the full internal
orbit for every point of the external $q$-orbit; in particular,
$p_i^pf=f$.  Flatness and reindexing of the sum give
\begin{align}\label{cyc:functional-rules}
 \mathcal S_{\boldsymbol m}[\Delta_\alpha f]&=0,\notag\\
 \mathcal S_{\boldsymbol m+e_i}[f]
   &=\mathcal S_{\boldsymbol m}[\rho_i p_i f],\qquad
 \mathcal S_{\boldsymbol m+pe_i}[f]
   =\lambda_i\mathcal S_{\boldsymbol m}[f].
\end{align}
These relations connect the internal graph of vertices
\(\boldsymbol x_{\boldsymbol j}\) and edge factors
\(\Phi_\alpha\) with the external K\"ahler lattice
\(\boldsymbol m\in\Z^n\).  The Bethe equations close the
internal coordinate loops.  A step in external direction \(i\)
acts on the entire functional by insertion of \(\rho_i\), and
the mixed relation in \eqref{cyc:flatness} makes this action
compatible with internal transport and exact differences.
Notice that even though
\(\boldsymbol\zeta(\boldsymbol m+pe_i)
=\boldsymbol\zeta(\boldsymbol m)\), the functional acquires the
multiplier \(\lambda_i\).  Internal closure therefore allows
nontrivial external Bloch holonomy.

\begin{theorem}\label{cyc:finite-frobenius}
Let $q=\omega_p$ be a primitive root of any positive order $p$.
Let $\boldsymbol x^0$ belong to a regular closed internal orbit
with $\mathcal B_{p,\alpha}=1$ for all $\alpha$, and suppose that the powered
parameters and Bethe point are in the generic locus of the
classical tRS/Bethe correspondence.  For
$\lambda_i=\rho_i(\boldsymbol x^0)^p$ we have, for
$1\leq r\leq n$,
\begin{equation}\label{cyc:powered-level-set}
 h_r(\boldsymbol\zeta^p,\boldsymbol\lambda;\hbar^p)
       =e_r(\boldsymbol a^p).
\end{equation}
If $f$ is a rational insertion regular on the full internal and
external orbits and $p_i^pf=f$, then
$V_f(\boldsymbol m)=\mathcal S_{\boldsymbol m}[f]$ obeys
\begin{align}\label{cyc:finite-frobenius-equations}
 p_i^pV_f&=\lambda_iV_f,\notag\\
 H_r^{[p]}V_f
   &=\mathcal A_{p,r}(H_1,\ldots,H_n)V_f
     =e_r(\boldsymbol a^p)V_f.
\end{align}
These identities require no ordinary tRS eigenvalue equation and
remain valid when $V_f=0$.  Whenever $V_f$ is nonzero, it is a
joint eigenfunction of the tRS family precomposed with the
Frobenius action.
\end{theorem}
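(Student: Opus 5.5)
The proof has three parts, taken in this order: the Bloch multipliers, the level set, and then the powered equations via the operator identity.

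\emph{Bloch multipliers.} The relation $p_i^pV_f=\lambda_iV_f$ follows from the functional rules \eqref{cyc:functional-rules}. Iterating $\mathcal S_{\boldsymbol m+e_i}[f]=\mathcal S_{\boldsymbol m}[\rho_ip_if]$ $p$ times gives
\[
 \mathcal S_{\boldsymbol m+pe_i}[f]
 =\mathcal S_{\boldsymbol m}\Bigl[\prod_{d=0}^{p-1}(p_i^d\rho_i)\,p_i^pf\Bigr].
\]
Since $\rho_i$ depends only on the internal roots and not on $\boldsymbol z$, the product equals $\rho_i^p$. Since $p_i^pf=f$, the right side is $\mathcal S_{\boldsymbol m}[\rho_i^pf]$. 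It remains to replace $\rho_i^p$ by the constant $\lambda_i=\rho_i(\boldsymbol x^0)^p$. On the orbit, $T_\alpha$ multiplies $\rho_i$ by a $p$th root of unity: $\rho_i$ is a monomial ratio $L_i/L_{i-1}$ up to a constant, so $\rho_i^p$ is invariant under every internal shift. Hence $\rho_i^p\equiv\lambda_i$ on $\mathcal O$, and the factor pulls out of the sum. This matches the shift $G(\boldsymbol m+pe_i)/G(\boldsymbol m)=\lambda_i$ built into the character. The computation is pure reindexing and uses only Theorem~\ref{cyc:closure} to ensure the weights $\mu_{\boldsymbol j}$ are single-valued.

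\emph{Level set.} Next I establish \eqref{cyc:powered-level-set}. By \eqref{cyc:powered-Bethe}, the powered roots $\boldsymbol x^p$ solve the $q=1$ Bethe equations at $(\boldsymbol z^p,\boldsymbol a^p,\hbar^p)$. By \eqref{cyc:flag-momenta}, the $\lambda_i$ are exactly the classical momenta $\hbar^{p((n+1)/2-i)}(L_i/L_{i-1})^p$ built from these powered roots. I then invoke the classical tRS/Bethe correspondence of \cite{KPSZ21} (Theorems~4.6 and~4.9), applied at the powered parameters with coupling $c=\hbar^p$. It states that the momenta attached to a Bethe solution lie on the level set $h_r(Z,\pi;c)=e_r(\boldsymbol a^p)$ with positions $Z_i=\zeta_i^p$. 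The only care needed is matching normalizations: the conversion $z_r=q\zeta_r/\zeta_{r+1}$ becomes $z_r^p=\zeta_r^p/\zeta_{r+1}^p$ after powering; $\hbar$ is inverted relative to \cite{KPSZ21}; and the central factor $\hbar^{-n/2}$ is divided out. These are exactly the conventions recorded after \eqref{cyc:quantum-momentum}, so I would cite them rather than recompute. This is the step I expect to be the main obstacle. Any slip in the centered powers of $\hbar$ or in the identification $Z_i=\zeta_i^p$ versus $z_r^p$ would shift the spectrum by a scalar. I would first check the bookkeeping against the $n=2$ case, where \eqref{two:powered-Bethe-residual} already verifies it.

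\emph{Powered equations.} With the multipliers in hand, $H_r^{[p]}$ is a polynomial in the central elements $Z_i$ and $\Pi_i=p_i^p$, and it acts on $V_f$ by substituting $\Pi_i\mapsto\lambda_i$. This gives
\[
 H_r^{[p]}V_f=h_r(\boldsymbol\zeta^p,\boldsymbol\lambda;\hbar^p)V_f=e_r(\boldsymbol a^p)V_f,
\]
which is Proposition~\ref{flag:powered-Bloch-criterion} in the sufficiency direction. The nondegeneracy $Z_i\neq Z_j$ holds on the regular locus. Theorem~\ref{flag:frobenius} identifies $H_r^{[p]}$ with $\mathcal A_{p,r}(H_1,\ldots,H_n)$ as formal operators, which yields the middle equality. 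Neither step divides by $V_f$, so the identities hold even when $V_f=0$. The final sentence of the theorem is then immediate.
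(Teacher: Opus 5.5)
Your proposal is correct and follows the paper's proof. The level set \eqref{cyc:powered-level-set} comes from \eqref{cyc:powered-Bethe} together with the correspondence of \cite{KPSZ21} at the powered parameters, and the powered equations come from substituting the multipliers $\lambda_i$ into $H_r^{[p]}$ and invoking Theorem~\ref{flag:frobenius}, with no division by $V_f$. The only difference is that you derive $p_i^pV_f=\lambda_iV_f$ explicitly by iterating the one-step rule, whereas the paper quotes the full-cycle rule in \eqref{cyc:functional-rules} directly; your derivation is correct, and closure is not even needed there, since $\boldsymbol z(\boldsymbol m+pe_i)=\boldsymbol z(\boldsymbol m)$ and $G$ supplies the factor $\lambda_i$.
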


\begin{proof}
Equation \eqref{cyc:powered-Bethe} identifies $\mathcal B_{p,\alpha}=1$ with
the Bethe equations for $\boldsymbol x^p$ at
$(\boldsymbol z^p,\boldsymbol a^p,\hbar^p)$.
We can therefore apply the classical correspondence of
\cite{KPSZ21} (Theorems~4.6 and~4.9).  With the normalization
following \eqref{cyc:quantum-momentum}, its momenta are given by
\eqref{cyc:flag-momenta}, which proves
\eqref{cyc:powered-level-set} without using a difference equation
for $V_f$.  In these homogeneous coordinates
$z_r^p=\zeta_r^p/\zeta_{r+1}^p$, since $q^p=1$.

The full-cycle relation in \eqref{cyc:functional-rules} is the
first equation of \eqref{cyc:finite-frobenius-equations}.
Since both the coefficients of $H_r^{[p]}$ and the multipliers
$\lambda_i$ are invariant under all external shifts, substitution
into the operator gives
\[
\begin{aligned}
 H_r^{[p]}V_f
 &=\hbar^{-pr(n-r)/2}
   \sum_{|I|=r}\prod_{\substack{i\in I\\j\notin I}}
       \frac{\hbar^p\zeta_i^p-\zeta_j^p}
            {\zeta_i^p-\zeta_j^p}
       \prod_{i\in I}\lambda_i\,V_f\\
 &=h_r(\boldsymbol\zeta^p,\boldsymbol\lambda;\hbar^p)V_f.
\end{aligned}
\]
Now \eqref{cyc:powered-level-set} and
Theorem~\ref{flag:frobenius} give the second equation.  The
calculation does not involve division by $V_f$.
\end{proof}

We have thus obtained the Frobenius equations from Bethe closure
for every regular insertion, including the unit class and the
descendants below.  This gives a finite-sum construction alongside
the eigenfunction implication of
Corollary~\ref{flag:eigenfunction-frobenius}.

Let us illustrate the cancellation for $T^*\Fl_3$, where the
internal variables are $x_{1,1},x_{2,1},x_{2,2}$.  For $p=2$ the
sum has eight terms.  On any internal coordinate line denote the
weights by $\mu_0,\mu_1$, the transport ratios by
$\Phi_0,\Phi_1$, and the values of a test function by $f_0,f_1$.
Then
\[
 \mu_0(\Phi_0f_1-f_0)+\mu_1(\Phi_1f_0-f_1)
 =(\mu_0\Phi_0-\mu_1)f_1
   +(\mu_1\Phi_1-\mu_0)f_0=0.
\]
The transport relation makes the first coefficient zero, and the
Bethe equation $\Phi_0\Phi_1=1$ makes the second zero.  There
are four such lines in each internal direction.  For this flag
the K\"ahler multipliers take the form
\[
 \lambda_1=\hbar^p x_{1,1}^p,\qquad
 \lambda_2=\left(\frac{x_{2,1}x_{2,2}}{x_{1,1}}\right)^p,\qquad
 \lambda_3=\hbar^{-p}
       \left(\frac{a_1a_2a_3}{x_{2,1}x_{2,2}}\right)^p.
\]

\subsection{Explicit Descendant Components}
\label{cyc:explicit-components}

We now express the unit-insertion vertex as a sum of explicit
descendant components.  The descendants form a partition of
unity, and their finite sums satisfy the Frobenius equations of
Theorem~\ref{cyc:finite-frobenius}.

In \cite{KZ21} (Theorem~5.11, equations~(5.25)--(5.26)),
stable-envelope insertions give the components of a geometric
qKZ solution.  We use the same interpretation of components as
vertices with insertions, taking interpolation descendants to
decompose the finite closed-cycle sum.

With $L_0=1$, introduce the successive determinant ratios
\[
 \beta_r=\frac{L_r}{L_{r-1}},\qquad 1\leq r<n.
\]
For pairwise distinct equivariant parameters $a_1,\ldots,a_n$
and $\sigma\in S_n$, define the rational descendant
\begin{equation}\label{cyc:interpolation-weights}
 w_\sigma(\boldsymbol x;\boldsymbol a)
 =\prod_{r=1}^{n-1}\prod_{s=r+1}^{n}
       \frac{\beta_r-a_{\sigma(s)}}
            {a_{\sigma(r)}-a_{\sigma(s)}}.
\end{equation}
By a \emph{descendant insertion} we mean a symmetric Laurent
polynomial in the tautological roots, with coefficients in the
localized equivariant coefficient field, inserted into the vertex
integral or finite sum.  Here $\beta_r=L_r/L_{r-1}$ represents
the line-bundle class
$\det\mathcal V_r\otimes(\det\mathcal V_{r-1})^{-1}$.
Consequently $w_\sigma$ is a Laurent polynomial in determinant
characters whose coefficients are rational functions of the
$a_i$.  Equivariant localization inverts the factors
$a_{\sigma(r)}-a_{\sigma(s)}$, which accounts for the term
\emph{rational descendant}.  The expression is symmetric in the
roots of each $\mathcal V_r$ separately and is regular on the
internal torus for distinct $a_i$.  We use the displayed
representative as the insertion.
The functional $\mathcal S_{\boldsymbol m}$ contains the
homogeneous character and its centering, so we set
\begin{equation}\label{cyc:explicit-component-definition}
 V(\boldsymbol m)=\mathcal S_{\boldsymbol m}[1],
 \qquad
 I_\sigma(\boldsymbol m)=\mathcal S_{\boldsymbol m}[w_\sigma].
\end{equation}
Since $w_\sigma$ is independent of the external coordinates and
regular on the internal torus for distinct $a_i$, we can apply
Theorem~\ref{cyc:finite-frobenius} to each $I_\sigma$ and to $V$.
In particular,
\[
 H_r^{[p]}I_\sigma=e_r(\boldsymbol a^p)I_\sigma,
 \qquad p_i^pI_\sigma=\lambda_iI_\sigma.
\]

\begin{proposition}
\label{cyc:explicit-component-sum}
The functions \eqref{cyc:interpolation-weights} obey the rational
identity
\begin{equation}\label{cyc:partition-of-unity}
 \sum_{\sigma\in S_n}w_\sigma=1.
\end{equation}
Their classes give the fixed-point interpolation basis of
localized $K_T(T^*\Fl_n)$.  As polynomials in the independent
variables $\beta_1,\ldots,\beta_{n-1}$, they also form a basis of
the space defined by $\deg_{\beta_r}\leq n-r$.
For any finite-cycle functional satisfying
\eqref{cyc:functional-rules}, we have
\begin{equation}\label{cyc:explicit-component-sum-formula}
 V=\sum_{\sigma\in S_n}I_\sigma,\qquad
 I_\sigma=\mathscr P_\sigma(\boldsymbol p)V,
\end{equation}
where the difference polynomial is given explicitly by
\begin{equation}\label{cyc:component-difference-polynomial}
 \mathscr P_\sigma(\boldsymbol p)
 =\prod_{r=1}^{n-1}\prod_{s=r+1}^{n}
   \frac{\hbar^{\,r-(n+1)/2}p_r-a_{\sigma(s)}}
        {a_{\sigma(r)}-a_{\sigma(s)}}.
\end{equation}
Thus $\sum_\sigma\mathscr P_\sigma(\boldsymbol p)=1$ is an
identity of difference operators.
\end{proposition}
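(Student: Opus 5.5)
The plan is to treat $w_\sigma$ as a product of iterated Lagrange interpolation polynomials in the independent variables $\beta_1,\ldots,\beta_{n-1}$. After that, the operator statement follows from the rule $\mathcal S_{\boldsymbol m+e_i}[f]=\mathcal S_{\boldsymbol m}[\rho_ip_if]$ in \eqref{cyc:functional-rules}.

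\emph{Partition of unity.} Sum over $\sigma$ in stages. A permutation $\sigma$ is the same as a sequence of choices: $\sigma(1)\in\{1,\ldots,n\}$, then $\sigma(2)$ from the remaining $n-1$ values, and so on. The factor of $w_\sigma$ with index $r$ is
\[
 \ell_r(\beta_r)=\prod_{s>r}\frac{\beta_r-a_{\sigma(s)}}{a_{\sigma(r)}-a_{\sigma(s)}} .
\]
It is the Lagrange basis polynomial at the node $a_{\sigma(r)}$ for the node set $\{a_{\sigma(r)},\ldots,a_{\sigma(n)}\}$. This set is determined by $\sigma(1),\ldots,\sigma(r-1)$.
\begin{itemize}
\item Sum first over $\sigma(n-1)$ with $\sigma(1),\ldots,\sigma(n-2)$ fixed. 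This is the two-point Lagrange identity $\sum_k \ell_k=1$, so the last factor disappears.
\item Induct downward on $r$, using $\sum_{k\in S}\prod_{j\in S\setminus k}(\beta-a_j)/(a_k-a_j)=1$ at each step. This gives \eqref{cyc:partition-of-unity}.
\end{itemize}

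\emph{Basis statement.} Count dimensions: the space with $\deg_{\beta_r}\le n-r$ has dimension $\prod_r(n-r+1)=n!$. For linear independence, evaluate at the points
\[
 \beta_r=a_{\tau(r)},\qquad r=1,\ldots,n-1,\quad \tau\in S_n .
\]
Then $w_\sigma(\tau)=\delta_{\sigma\tau}$. Indeed, if $\tau\ne\sigma$, let $r$ be the first index with $\tau(r)\ne\sigma(r)$. Then $\tau(r)=\sigma(s)$ for some $s>r$, and the factor of $w_\sigma$ with index $r$ vanishes. The identification with the fixed-point interpolation basis of localized $K_T(T^*\Fl_n)$ follows the same way. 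At the fixed point labeled $\tau$ the roots of $\mathcal V_r$ are $\{a_{\tau(1)},\ldots,a_{\tau(r)}\}$ in a suitable convention, so $\beta_r$ restricts to $a_{\tau(r)}$. Hence $w_\sigma$ restricts to $\delta_{\sigma\tau}$.

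\emph{Operator statement.} Apply $\mathcal S_{\boldsymbol m}$ to \eqref{cyc:partition-of-unity}; linearity gives $V=\sum_\sigma I_\sigma$. For $I_\sigma=\mathscr P_\sigma(\boldsymbol p)V$:
\begin{itemize}
\item By \eqref{cyc:external-ratios}, $\beta_r=\hbar^{r-(n+1)/2}\rho_r$.
\item By \eqref{cyc:functional-rules}, inserting $\rho_r$ into the functional is the same as applying the shift $p_r$. The insertion $1$ is external-independent, so no $p_if$ correction appears.
\item The $\rho_i$ are independent of $\boldsymbol z$, so repeated and mixed insertions of $\rho_{r}$ commute with the shifts. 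Thus a monomial $\prod_r\beta_r^{k_r}$ inserts as $\prod_r(\hbar^{r-(n+1)/2}p_r)^{k_r}$ applied to $V$.
\item Expand $w_\sigma$ in monomials in $\beta$ (its coefficients are constants in $a$ and $\hbar$) and use linearity. This gives \eqref{cyc:component-difference-polynomial}.
\end{itemize}

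The step needing most care is this last one. One must confirm that the $\rho_i$ are evaluated at the shifted internal points consistently, so that iterating $\mathcal S_{\boldsymbol m+e_i}[f]=\mathcal S_{\boldsymbol m}[\rho_if]$ for $f$ with no external dependence really produces $\rho_i^{k}$ rather than products at shifted arguments. Since $\rho_i$ depends only on $\boldsymbol x$, and the external shift does not move $\boldsymbol x$, I expect this to be direct. Finally, $\sum_\sigma\mathscr P_\sigma=1$ as difference operators is the same Lagrange identity with $\beta_r$ replaced by the commuting $\hbar^{r-(n+1)/2}p_r$.
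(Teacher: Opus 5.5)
Your proposal is correct and follows essentially the same route as the paper. Both use iterated Lagrange interpolation for $\sum_\sigma w_\sigma=1$, and both prove the basis claim by evaluating at $\beta_r=a_{\tau(r)}$ with the first-differing-index argument together with the count $\prod_r(n-r+1)=n!$. Both then obtain $I_\sigma=\mathscr P_\sigma(\boldsymbol p)V$ from the transport rule in \eqref{cyc:functional-rules} and $\beta_r=\hbar^{r-(n+1)/2}\rho_r$, using that $w_\sigma$ and $\rho_r$ are independent of the external coordinates.
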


\begin{proof}
For a finite set $A$ of distinct scalars, interpolation of the
constant polynomial gives
\[
 \sum_{a\in A}\prod_{c\in A\setminus\{a\}}
                  \frac{b-c}{a-c}=1.
\]
Let us fix $\sigma(1)$ in the sum
\eqref{cyc:partition-of-unity}.  Applying this identity
recursively to the remaining parameters and the variables
$\beta_2,\ldots,\beta_{n-1}$ shows that the inner sum is one.
Its application to $\beta_1$ then proves
\eqref{cyc:partition-of-unity}.  No relations among the
$\beta_r$ enter this calculation.

At the fixed flag $\tau$ we have $\beta_r=a_{\tau(r)}$.
For $\sigma\neq\tau$, let $r$ be the first index at which they
differ.  Then $a_{\tau(r)}$ belongs to
$a_{\sigma(r+1)},\ldots,a_{\sigma(n)}$, so one factor of
$w_\sigma|_\tau$ vanishes.  On the other hand,
$w_\tau|_\tau=1$.  Equivariant localization identifies these
classes with the coordinate basis of the fixed-point set.
The same evaluations prove linear independence of the $n!$
polynomials $w_\sigma$.  Since the space with the prescribed
degree bounds has dimension $\prod_{r=1}^{n-1}(n-r+1)=n!$,
they form a basis of that space as well.

Applying the linear functional to the partition of unity gives
the first equality in \eqref{cyc:explicit-component-sum-formula}.
Its external transport rule, together with
$\rho_r=\hbar^{(n+1)/2-r}\beta_r$, realizes insertion of
$\beta_r$ by the operator $\hbar^{r-(n+1)/2}p_r$.
Both the descendants and the $\rho_r$ are independent of the
external coordinates.  Repeating these insertions therefore
gives \eqref{cyc:component-difference-polynomial}, and substituting
into \eqref{cyc:partition-of-unity} proves the last assertion.
\end{proof}

For $n=2$ this gives the two descendants
$(x-a_2)/(a_1-a_2)$ and $(x-a_1)/(a_2-a_1)$.
The partition of unity applies equally to a vertex integral
whenever these insertions are defined.  In every rank the
components \(I_\sigma\) use one cycle, one phase, and one
normalization; only their insertions vary.  Thus \(\sigma\)
indexes the entries of a single column, rather than independently
chosen solutions.
On a regular closed orbit satisfying the assumptions of
Theorem~\ref{cyc:finite-frobenius}, this construction gives
\[
 V=\sum_{\sigma\in S_n}I_\sigma,
 \qquad H_r^{[p]}V=e_r(\boldsymbol a^p)V,
 \qquad p_i^pV=\lambda_iV.
\]
Each component $I_\sigma$ obeys the same powered equations.
For a joint ordinary eigenfunction $H_rV=e_r(\boldsymbol a)V$,
these equations instead follow directly from the operator
identity of Theorem~\ref{flag:frobenius}.  This is
Corollary~\ref{flag:eigenfunction-frobenius}, valid in every rank
and at every root order.


\begin{bibdiv}
\raggedright
\begin{biblist}
\bib{C94}{article}{
 author={Cherednik, Ivan},
 title={Integration of Quantum Many-Body Problems by Affine Knizhnik--Zamolodchikov Equations},
 journal={Adv. Math.},
 volume={106},
 pages={65--95},
 date={1994},
}
\bib{D21}{article}{
 author={Dinkins, Hunter},
 title={$3d$ Mirror Symmetry of the Cotangent Bundle of the Full Flag Variety},
 date={2021},
 eprint={2011.08603},
}
\bib{EV24a}{article}{
 author={Etingof, Pavel},
 author={Varchenko, Alexander},
 title={Periodic and Quasi-Motivic Pencils of Flat Connections},
 date={2024},
 eprint={2401.00636},
}
\bib{EV24b}{article}{
 author={Etingof, Pavel},
 author={Varchenko, Alexander},
 title={$p$-Curvature of Periodic Pencils of Flat Connections},
 date={2024},
 eprint={2401.05652},
}
\bib{K21}{article}{
 author={Koroteev, Peter},
 title={A-Type Quiver Varieties and ADHM Moduli Spaces},
 journal={Commun. Math. Phys.},
 volume={381},
 number={1},
 pages={175--207},
 date={2021},
 eprint={1805.00986},
}
\bib{K23}{article}{
 author={Koroteev, Peter},
 title={Quantum Geometry, Integrability, and Opers},
 date={2023},
 eprint={2312.17500},
}
\bib{KPSZ21}{article}{
 author={Koroteev, Peter},
 author={Pushkar, Petr P.},
 author={Smirnov, Andrey},
 author={Zeitlin, Anton M.},
 title={Quantum $K$-Theory of Quiver Varieties and Many-Body Systems},
 journal={Selecta Math. (N.S.)},
 volume={27},
 number={5},
 note={Article 87},
 date={2021},
 eprint={1705.10419},
}
\bib{KS26}{article}{
 author={Koroteev, Peter},
 author={Smirnov, Andrey},
 title={Quantum $K$-Theory of Quiver Varieties at Roots of Unity},
 journal={Int. Math. Res. Not. IMRN},
 date={2026},
 number={14},
 note={rnag153},
 eprint={2412.19383},
}
\bib{KZ21}{article}{
 author={Koroteev, Peter},
 author={Zeitlin, Anton M.},
 title={qKZ/tRS Duality via Quantum $K$-Theoretic Counts},
 journal={Math. Res. Lett.},
 volume={28},
 number={2},
 pages={435--470},
 date={2021},
 eprint={1802.04463},
}
\bib{M92}{article}{
 author={Matsuo, Atsushi},
 title={Integrable Connections Related to Zonal Spherical Functions},
 journal={Invent. Math.},
 volume={110},
 number={1},
 pages={95--122},
 date={1992},
}
\bib{R87}{article}{
 author={Ruijsenaars, S. N. M.},
 title={Complete Integrability of Relativistic Calogero--Moser Systems and Elliptic Function Identities},
 journal={Commun. Math. Phys.},
 volume={110},
 pages={191--213},
 date={1987},
}
\bib{RS86}{article}{
 author={Ruijsenaars, S. N. M.},
 author={Schneider, H.},
 title={A New Class of Integrable Systems and Its Relation to Solitons},
 journal={Ann. Physics},
 volume={170},
 number={2},
 pages={370--405},
 date={1986},
}
\bib{SV23}{article}{
 author={Smirnov, Andrey},
 author={Varchenko, Alexander},
 title={The $p$-Adic Approximations of Vertex Functions via $3D$-Mirror Symmetry},
 date={2023},
 eprint={2302.03092},
}
\bib{TV26}{article}{
 author={Tarasov, Vitaly},
 author={Varchenko, Alexander},
 title={Monodromy Eigenvectors for Difference Equations with Root-of-Unity Step},
 date={2026},
 eprint={2609.17742},
}
\bib{VV10}{article}{
 author={Varagnolo, Michela},
 author={Vasserot, Eric},
 title={Double Affine Hecke Algebras at Roots of Unity},
 date={2010},
 eprint={math/0603744},
}
\end{biblist}
\end{bibdiv}
\end{document}